\def\thm@space@setup{%
  \thm@preskip=\parskip \thm@postskip=0pt
}
\titleformat{\section}[block]{\color{black}\large\bfseries\filcenter}{\thesection.}{0.5em}{}
\titleformat{\subsection}[hang]{\bfseries}{}{0.5em}{}
\numberwithin{equation}{section}
\newtheorem{theorem}{Theorem}[section]
\newtheorem{lemma}[theorem]{Lemma}
\newtheorem{proposition}[theorem]{Proposition}
\newtheorem{corollary}[theorem]{Corollary}
\newtheorem{definition}[theorem]{Definition}
\newtheorem{problem}[theorem]{Problem}
\newtheorem{remark}[theorem]{Remark}
\newtheorem{ltheorem}{Theorem}
\titleformat{\subsection}[runin]{\bfseries}{}{}{}[.]
\titleformat{\subsubsection}[runin]{\bfseries}{}{}{}[.]
\renewenvironment{proof}[1][\proofname]{%
   \par\pushQED{\qed}\normalfont%
   \topsep6\p@\@plus6\p@\relax
   \trivlist\item[\hskip\labelsep\bfseries#1\@addpunct{.}]%
   \ignorespaces
}{%
   \popQED\endtrivlist\@endpefalse
}
\newcommand{\CC}{\mathbf{C}}
\newcommand{\EE}{\mathbf{E}}
\newcommand{\PP}{\mathbf{P}}
\newcommand{\RR}{\mathbf{R}}
\newcommand{\TT}{\mathbf{T}}
\newcommand{\B}{\mathcal{B}}
\newcommand{\C}{\mathcal{C}}
\newcommand{\G}{\mathcal{G}}
\def\L{\mathcal{L}}
\newcommand{\M}{\mathcal{M}}
\newcommand{\N}{\mathcal{N}}
\newcommand{\Q}{\mathcal{Q}}
\newcommand{\R}{\mathcal{R}}
\newcommand{\U}{\mathcal{U}}
\newcommand{\X}{\mathcal{X}}
\newcommand{\Se}{\mathscr{S}}
\newcommand{\into}{\hookrightarrow}
\newcommand{\tto}{\longrightarrow}
\DeclareMathOperator*\wstspan{\overline{\mathrm{span}^{\wast}}}
\def\XXint#1#2#3{{\setbox0=\hbox{$#1{#2#3}{\int}$ }
\vcenter{\hbox{$#2#3$ }}\kern-.6\wd0}}
\def\1{\mathbf{1}}
\def\Id{\mathrm{id}}
\def\H{{H}}
\def\Q{\mathcal{Q}}
\def\M{\mathcal{M}}
\def\N{\mathcal{N}}
\def\Zent{\mathcal{Z}}
\def\Ad{\mathrm{Ad}}
\newcommand{\vertiii}[1]{
 {\left\vert\kern-0.25ex\left\vert\kern-0.25ex\left\vert #1 
  \right\vert\kern-0.25ex\right\vert\kern-0.25ex\right\vert}
}
\def\Prj{\mathscr{P}}
\def\Ball{\mathrm{Ball}}
\def\CB{\mathcal{C B}}
\def\wast{{{\mathrm{w}}^\ast}}
\def\op{\mathrm{op}}
\def\cb{\mathrm{cb}}
\def\Aut{\mathrm{Aut}}
\def\Tr{\mathrm{Tr}}
\def\Inn{\mathrm{Inn}}
\def\Out{\mathrm{Out}}
\def\algtensor{\otimes_{\mathrm{alg}}}
\def\weaktensor{\bar{\otimes}}
\title{
  Lower bounds in $L^p$-transference for crossed-products
}
\author{
  Adri\'an M. Gonz\'alez-P\'erez
  \thanks{
    The author has been partially funded by the ANR grant HASCON 
  } 
}
\date{}
\begin{document}

\maketitle

\begin{abstract}
  Let $\Gamma \curvearrowright \Omega$ be a measure-preserving action and $\L \Gamma \into L^\infty(\Omega) \rtimes \Gamma$ 
  the natural inclusion of the group von Neumann algebra into the crossed product. When $\mu(\Omega) = \infty$,
  we have that this natural embedding is not trace-preserving and therefore does not extends boundedly to the associated noncommutative $L^p$-spaces.
  Nevertheless, we show that when $\Omega$ has an invariant mean there is an isometric embedding of 
  $L^p(\L \Gamma)$ into an ultrapower of $L^p(\Omega \rtimes \Gamma)$ that intertwines Fourier multipliers 
  and it is $\L \Gamma$-bimodular. As a consequence we obtain the lower transference bound
  \[
    \big\| T_m: L^p(\L \Gamma) \to L^p(\L \Gamma) \big\|
    \leq
    \big\| (\Id \rtimes T_m): L^p(\Omega \rtimes \Gamma) \to L^p(\Omega \rtimes \Gamma) \big\|, 
  \]
  and the same follows for complete norms. The techniques employed are in line with those of \cite{Gon2018CP} in which the reverse inequality 
  was proven for measure preserving Zimmer-amenable actions.
  Both are preceded by the pioneering works of Neuwirth/Ricard and Caspers/de la Salle \cite{NeuRic2011, CasSall2015} for amenable groups. 
  
  The condition of having an invariant mean is quite restrictive. Therefore, we explore whether other equivariant embeddings
  $\Phi: \L \Gamma \to L^\infty(\Omega)$ yield a transference result as above. In this context, a map is considered equivariant 
  if it is co-multiplicative with respect to the canonical co-multiplication of $\L \Gamma$ and the canonical co-action of 
  $\L \Gamma$ on $L^\infty(\Omega) \rtimes \Gamma$. Those maps are given by linear extension of
  \[
    \Phi(\lambda_g) = \varphi_g \rtimes \lambda_g,
  \]
  for a collection of functions $(\varphi_g)_{g \in \Gamma} \subset L^\infty(\Omega)$. We start by noticing that the multiplicative,
  completely positive and completely bounded equivariant maps can be easily characterized.
  In particular, completely positive equivariant maps are given by the matrix coefficients
  of unitary $1$-cocycles $\kappa: \Gamma \times \Omega \to \U(H)$.
  Then, we show that the transference proof above works verbatim whenever $\Phi$ is completely positive, amenable in the sense of Popa
  and Anantharaman-Delaroche \cite{Anan1995AmenableCorr} and intertwines Fourier multipliers at the $L^2$-level. Although no new transference results are obtained, 
  both the classification of equivariant maps and the study their amenability may be of independent interest to some readers.
\end{abstract}

\section*{Introduction}
\textbf{Fourier and Herz-Schur multipliers.}
Let $\Gamma$ be a discrete group and let $\L \Gamma \subset \B(\ell^2 \Gamma)$ be its left regular von Neumann algebra, that is, the weak-$\ast$ closure of the group algebra $\CC[ \Gamma ]$ under the left regular representation $\lambda: \Gamma \to \B(\ell^2 \Gamma)$. Let $m \in \ell^\infty(\Gamma)$ be a function. The (potentially unbounded) operator $T_m: \CC[\Gamma] \subset \L \Gamma \to \L \Gamma$ given by
\[
  T_m \Big( \sum_{g \in \Gamma} a_g \lambda_g \Big)
  \, = \,
  \sum_{g \in \Gamma} a_g \, m(g) \, \lambda_g,
\] 
is called the \emph{Fourier multiplier of symbol $m$}. Whenever $T_m: \L \Gamma \to \L \Gamma$ is bounded/completely bounded, it is said that $m$ is a bounded/completely bounded Fourier multiplier. Observe that, if $\iota:\L \Gamma \into \B(\ell^2 \Gamma)$ is the natural embedding, we have that the following diagram commutes
\begin{equation*}
\xymatrix@C=2cm{
  \L \Gamma \ar[r]^{\iota} \ar[d]^{T_m} & \B(\ell^2 \Gamma) \ar[d]^{H_m}\\
  \L \Gamma \ar[r]^{\iota} & \B(\ell^2 \Gamma),
}
\end{equation*}
where $H_m: M_{\Gamma \times \Gamma}(\CC) \subset \B(\ell^2 \Gamma) \to \B(\ell^2 \Gamma)$ is the so-called \emph{Herz-Schur multiplier of symbol $m$}, that is the (again potentially unbounded) operator given by
\[
  H_m \Big( \sum_{g \in \Gamma} a_{g,h} \, e_{g, h} \Big)
  \, = \,
  \sum_{g \in \Gamma} a_{g, h} \, m(g \, h^{-1}) \, e_{g, h}.
\]
Since $\iota$ is isometric, we have that the norm of $T_m: \L \Gamma \to \L \Gamma$ is bounded by that of $H_m: \B(\ell^2 \Gamma) \to \B(\ell^2 \Gamma)$. The reciprocal was shown to be true by Bozejko/Fendler \cite{BozejkoFendler1984HSFourier} but in that case the complete boundedness of $T_m$ is required. In sum, its is known that for every $m$, the following holds
\begin{enumerate}[leftmargin=1.25cm, label={\textbf{(\Alph*)}}, ref={(\Alph*)}]
  \item \label{itm:IntroA}
  $\displaystyle{\big\| H_m: \B(\ell^2 \Gamma) \to \B(\ell^2 \Gamma) \big\| \leq \big\| T_m: \L \Gamma \to \L \Gamma \big\|_\cb}$.
  \item \label{itm:IntroB}
  $\big\| H_m: \B(\ell^2 \Gamma) \to \B(\ell^2 \Gamma) \big\|_\cb = \big\| H_m: \B(\ell^2 \Gamma) \to \B(\ell^2 \Gamma) \big\|$, ie: $H_m$ is automatically cb. 
  \item \label{itm:IntroC}
  $\displaystyle{\big\| T_m: \L \Gamma \to \L \Gamma \big\| \leq \big\| H_m: \B(\ell^2 \Gamma) \to \B(\ell^2 \Gamma) \big\|}$.
\end{enumerate}
The fact that $H_m$ is automatically continuous holds for general \emph{Schur multipliers}, ie bounded operators given by $e_{i, j} \mapsto m_{i,j} \, e_{i, j}$ and more general for bimodular operators, see \cite{Smith1991}. For Fourier multipliers it is known that boundedness and complete boundedness are not equivalent, since there are examples of multipliers that fail to be completely bounded over nonamenable groups, see \cite{HaagKra1994, HaagSteenSw2009}. 

\textbf{Noncommutative $L^p$-bounds and transference.}
Given a semifinite von Neumann algebra $\M$ and a normal, semifinite and faithful trace $\tau: \M_+ \to [0, \infty]$ their noncommutative $L^p$-spaces, see \cite{Terp1981lp, PiXu2003}, can be defined as the spaces of $\tau$-measurable operators $x$ such that
\[
  L^p(\M, \tau) \, = \,  \big\{ x : \| x \|_p \, := \, \tau \big( |x|^p \big)^\frac1{p} \, < \, \infty \big\}.
\]
In the case of $\B(\ell^2)$ with its usual trace $\Tr$ the associated noncommutative $L^p$-spaces are called the $p$-Schatten classes and denoted by $S^p(\ell^2)$. The group von Neumann algebra $\L \Gamma$ of a discrete group $\Gamma$ admits a normal tracial state $\tau: \L \Gamma \to \CC$, given by $\tau(x) = \langle \delta_e, x \, \delta_e \rangle$. We will denote their associated $L^p$-spaces by $L^p(\L \Gamma)$. 
Asking whether, given a symbol $m$, their associate Fourier and Herz-Schur multipliers are bounded over $L^p(\L \Gamma)$ and $S^p(\ell^2 \Gamma)$ respectively its an extremely difficult question that has received attention for its connections with approximations properties \cite{LaffSall2011}, the theory of Markovian semigroups \cite{GonJunPar2015, JunMeiPar2014Riesz} as well as the convergence of Fourier series over non-Abelian groups \cite{HongWangWang2020} among other problems. For noncommutative $L^p$-spaces, the analogues of points \ref{itm:IntroB} and \ref{itm:IntroC} are widely open. Nevertheless, point \ref{itm:IntroA} was shown to be true by Neuwirth/Ricard \cite{NeuRic2011}, which also showed that
\begin{enumerate}[leftmargin=1.25cm, label={\textbf{(\Alph*')}}]
  \item \label{itm:IntroC'}
  $\displaystyle{
    \big\| T_m: L^p(\L \Gamma) \to L^p(\L \Gamma) \big\| 
    \, \leq \, 
    \big\| H_m: S^p(\ell^2 \Gamma) \to S^p(\ell^2 \Gamma) \big\|,
  }$
  when $\Gamma$ is amenable.
\end{enumerate}
The same follows from complete norms. Their technique, which is sometimes called \emph{transference}, works by using the amenability of $\Gamma$ to construct, for every $1 \leq p < \infty$, a completely isometric embedding
\[
  L^p(\L \Gamma) \xrightarrow{\quad J_p \quad} \prod_{\U} S^p(\ell^2 \Gamma),
\]
where the space in the right hand side is a proper ultrapower of the Schatten classes, that intertwines the operators $T_m$ and $H_m^\U$. This technique was later generalized to locally compact groups \cite{CasSall2015} and to the context of trace-preserving Zimmer-amenable actions $\theta:\Gamma \to \Aut(\Omega, \mu)$ on a semifinite measure space \cite{Gon2018CP}. Indeed, in \cite{Gon2018CP} a (complete) isometry
\[
  L^p(\Omega \rtimes_\theta \Gamma) \xrightarrow{\quad J_p \quad} \prod_{\U} L^p(\Omega) \otimes_p S^p(\ell^2 \Gamma),
\]
that intertwines $\Id \rtimes T_m$ and $(\Id \otimes H_m)^\U$ was constructed using the amenability of $\theta$. That construction implies that 
\begin{equation}
  \label{eq:IntroOneSide}
  \begin{split}
    \big\| \Id \rtimes T_m: & L^p(\Omega \rtimes_\theta \Gamma) \to L^p(\Omega \rtimes_\theta \Gamma) \big\|\\
    & \, \leq \,
    \big\| \Id \otimes H_m: L^p(\Omega) \otimes_p S^p(\ell^2 \Gamma) \to L^p(\Omega) \otimes_p S^p(\ell^2 \Gamma) \big\|,
  \end{split}
\end{equation}
and the same follows for complete bounds. Although, Zimmer-amenable actions preserving a finite measure (or more generally a mean) can only be constructed for amenable groups, there are plenty of examples of non-amenable groups which act in a Zimmer-amenable way on a semifinite measure space. Indeed, any exact discrete group admits such an action, see the comments after Remark \ref{rmk:Several}. 

If the reverse inequality of \eqref{eq:IntroOneSide} were true for some action $\theta$, ie if
\begin{equation}
  \label{eq:ReverseIneq}
  \big\| T_m: L^p(\L \Gamma) \to L^p(\L \Gamma) \big\|_\cb
  \, \leq \,
  \big\| \Id \rtimes T_m: L^p(\L \Gamma) \to L^p(\L \Gamma) \big\|_\cb
\end{equation}
holds, then using the fact that the complete norm of $H_m$ is always bounded by that of $T_m$, we will have that the complete norms of $T_m: L^p(\L \Gamma) \to L^p(\L \Gamma)$ and $H_m: S^p(\ell^2 \Gamma) \to S^p(\ell^2 \Gamma)$ coincide. Sadly, we have only been able to obtain the reverse inequality when the action admits an invariant mean. Indeed, it holds that

\begin{ltheorem}
  \label{thm:IsometricInclusion}
  Let $\1 \leq p \leq \infty$ and $\U$ be a proper ultrafilter.
  If $\Omega$ has a $\theta$-invariant mean, there is a complete isometry
  \begin{equation*}
    L^p(\L \Gamma) \xrightarrow{\quad J_p \quad} \prod_{\U} L^p(\Omega \rtimes_\theta \Gamma)
  \end{equation*}
  which satisfies that
  \begin{equation}
    \xymatrix@C=3cm{
      L^p(\L \Gamma) \ar[d]^-{T_m} \ar[r]^-{J_p}
        & \displaystyle{ \prod_{\U} L^p(\Omega \rtimes_\theta \Gamma) \ar[d]^{(\Id \rtimes T_m)^\U}}\\
      L^p(\L \Gamma) \ar[r]^-{J_p}
        & \displaystyle{ \prod_{\U} L^p(\Omega \rtimes_\theta \Gamma) }\\
    }
    \label{dia:Intertwining}
  \end{equation}
  and is $\L \Gamma$-bimodular.
  
  As a consequence we have that, for every $1 \leq p \leq \infty$,
  \begin{equation}
    \label{eq:lowerTransferenceBnd}
    \big\| T_m: L^p(\L \Gamma) \to L^p(\L \Gamma) \big\|
    \leq
    \big\| (\Id \rtimes T_m): L^p(\Omega \rtimes_\theta \Gamma) \to L^p(\Omega \rtimes_\theta \Gamma) \big\|,    
  \end{equation}
  the same follows for completely bounded norms.
\end{ltheorem}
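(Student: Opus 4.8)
The plan is to promote the algebraic inclusion $\iota\colon \L\Gamma\to\M:=L^\infty(\Omega)\rtimes_\theta\Gamma$, $\iota(\lambda_g)=1\rtimes\lambda_g$, to an $L^p$-isometry by correcting its failure to be trace-preserving with the invariant mean. First I would record that, since $\theta$ preserves $\mu$, the action on $L^1(\Omega)$ is isometric, so by the usual Day--Reiter argument a $\theta$-invariant mean on $L^\infty(\Omega)$ produces a net $(d_\alpha)\subset L^1(\Omega)_+$ with $\|d_\alpha\|_1=1$ and $\|\theta_g d_\alpha-d_\alpha\|_1\to 0$ for every $g\in\Gamma$. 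Let $E\colon\M\to L^\infty(\Omega)$ be the canonical trace-preserving conditional expectation and $\psi_\alpha=\omega_\alpha\circ E$ the normal state attached to $\omega_\alpha=\int_\Omega(\,\cdot\,)\,d_\alpha\,d\mu$; viewing $d_\alpha\in L^1(\M)_+$ as its density one has $\psi_\alpha(y)=\tau_\M(d_\alpha y)$. Two structural facts drive everything: (i) $\psi_\alpha\circ\iota=\tau_{\L\Gamma}$ \emph{exactly} for every $\alpha$, because $E(1\rtimes\lambda_g)=\delta_{g,e}$ and $\omega_\alpha(1)=1$; and (ii) $\iota(\L\Gamma)$ lies only \emph{asymptotically} in the centralizer of $\psi_\alpha$, the obstruction being $[\iota(\lambda_g),d_\alpha^{s}]=(\theta_g(d_\alpha^{s})-d_\alpha^{s})\rtimes\lambda_g$, whose $L^{1/s}(\M)$-norm is controlled by $\|\theta_g(d_\alpha^{s})-d_\alpha^{s}\|_{L^{1/s}(\Omega)}\le\|\theta_g d_\alpha-d_\alpha\|_1^{s}\to0$ (using the elementary pointwise bound $|a^{s}-b^{s}|\le|a-b|^{s}$ for $0<s\le1$). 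I then set $J_p^\alpha(x)=d_\alpha^{1/2p}\,\iota(x)\,d_\alpha^{1/2p}\in L^p(\M)$ and $J_p=(J_p^\alpha)_\alpha$ into $\prod_\U L^p(\M)$, choosing $\U$ to refine the order filter of the directed index set so that $\lim_\U$ realizes the finitely many net limits above; for $p=\infty$ one simply takes $J_\infty=\iota$.

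\textbf{The isometry (the crux).} The main work is to show $\lim_\U\|J_p^\alpha(x)\|_p=\|x\|_p$ for $x\in\CC[\Gamma]$; I would argue through duality, which treats all $1\le p<\infty$ uniformly and avoids any estimate of $\tau_\M(|J_p^\alpha(x)|^p)$ for non-even $p$. For the upper bound, $x\mapsto d_\alpha^{1/2p}\iota(x)d_\alpha^{1/2p}$ is a complete contraction $L^p(\L\Gamma)\to L^p(\M)$: this is the standard functoriality of the $L^p$-embedding of a state-preserving $*$-homomorphism, obtained by interpolating the isometric cases $p=\infty$ (where it is $\iota$) and $p=1$ (where it is $\psi_\alpha\circ\iota=\tau_{\L\Gamma}$). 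For the lower bound, given $\varepsilon>0$ choose $u\in\CC[\Gamma]$ with $\|u\|_{p'}\le1$ and $\tau_{\L\Gamma}(ux)\ge\|x\|_p-\varepsilon$, and test against $J_{p'}^\alpha(u)$: using $\tfrac1{2p}+\tfrac1{2p'}=\tfrac12$ and the trace property,
\[
  \tau_\M\big(J_p^\alpha(x)\,J_{p'}^\alpha(u)\big)=\tau_\M\big(d_\alpha^{1/2}\,\iota(x)\,d_\alpha^{1/2}\,\iota(u)\big)=\tau_\M\big(d_\alpha\,\iota(xu)\big)+\mathrm{err}_\alpha=\tau_{\L\Gamma}(ux)+\mathrm{err}_\alpha,
\]
where $\mathrm{err}_\alpha=\tau_\M\big(d_\alpha^{1/2}[\iota(x),d_\alpha^{1/2}]\iota(u)\big)\to0$ by the commutator estimate of (ii) together with H\"older (exponents $\tfrac12+\tfrac12$, and $\iota(u)$ bounded), while $\|J_{p'}^\alpha(u)\|_{p'}\le1$ by the upper bound. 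Hence $\liminf_\U\|J_p^\alpha(x)\|_p\ge\|x\|_p-\varepsilon$, and $\varepsilon\to0$ gives the isometry on the dense subalgebra $\CC[\Gamma]$; contractivity then extends $J_p$ isometrically to all of $L^p(\L\Gamma)$. Running the identical computation with matrix coefficients $x\in M_n(\CC[\Gamma])$ (the densities becoming $1_{M_n}\otimes d_\alpha$, the constants dimension-free) upgrades this to a complete isometry.

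\textbf{Intertwining, bimodularity, and the bound.} The intertwining in \eqref{dia:Intertwining} is in fact \emph{exact} at each $\alpha$: since $\Id\rtimes T_m$ multiplies the $\lambda_g$-coefficient by $m(g)$ and is an $L^\infty(\Omega)$-bimodule map, $(\Id\rtimes T_m)\big(d_\alpha^{1/2p}\iota(\lambda_g)d_\alpha^{1/2p}\big)=m(g)\,d_\alpha^{1/2p}\iota(\lambda_g)d_\alpha^{1/2p}=J_p^\alpha(T_m\lambda_g)$, so $(\Id\rtimes T_m)^\U\circ J_p=J_p\circ T_m$. The $\L\Gamma$-bimodularity, by contrast, holds only in the limit: $\iota(\lambda_a)J_p^\alpha(x)\iota(\lambda_b)-J_p^\alpha(\lambda_a x\lambda_b)$ is a sum of terms each carrying a commutator $[\iota(\lambda_a),d_\alpha^{1/2p}]$ or $[\iota(\lambda_b),d_\alpha^{1/2p}]$, which vanish along $\U$ by (ii); thus $J_p$ is exactly $\L\Gamma$-bimodular into the ultrapower. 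Finally, with $J_p$ a complete isometry fitting the commuting square, for $\xi\in L^p(\L\Gamma)$,
\[
  \|T_m\xi\|_p=\|J_p(T_m\xi)\|=\big\|(\Id\rtimes T_m)^\U J_p\xi\big\|\le\|\Id\rtimes T_m\|\,\|J_p\xi\|=\|\Id\rtimes T_m\|\,\|\xi\|_p,
\]
using that the componentwise ultrapower map has the same norm as $\Id\rtimes T_m$; taking the supremum over the unit ball yields \eqref{eq:lowerTransferenceBnd}, and the completely bounded statement follows verbatim from the completeness of the isometry.

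\textbf{Main obstacle.} The delicate point is entirely contained in the isometry step: approximate invariance is only available in $L^1(\Omega)$, so the whole scheme hinges on converting $L^1$-almost-invariance into the vanishing of the commutators involving the fractional densities $d_\alpha^{s}$ that appear for every $1\le p<\infty$. The duality trick is what makes this uniform in $p$; the remaining care is the bookkeeping of H\"older exponents and the choice of $\U$ so that all relevant net-limits are realized simultaneously.
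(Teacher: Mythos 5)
Your proposal is correct, and its overall architecture coincides with the paper's: the same damped embeddings (your $J_p^\alpha(x)=d_\alpha^{1/2p}\,\iota(x)\,d_\alpha^{1/2p}$ are exactly the paper's $u_\alpha|\xi_\alpha|^{1/p}\iota(x)|\xi_\alpha|^{1/p}$ with $\xi_\alpha=d_\alpha^{1/2}\geq 0$), the same ultraproduct, contractivity by interpolation between $p=\infty$ and $p=1$, and isometry by testing against the dual family and passing to the limit. The genuine difference is the technical engine behind the asymptotic computations. The paper states that its proof ``leans in a key way'' on Theorem \ref{thm:almostMultiplicativemap} and Lemma \ref{lem:Approximate} (the Caspers--Parcet--Perrin--Ricard / Ricard almost-multiplicativity and noncommutative Mazur-map estimates), which convert $L^2$-smallness of the commutators $[\,\iota(\lambda_g),\xi_\alpha]$ into $L^p$-smallness of $[\,\iota(\lambda_g),|\xi_\alpha|^{2/p}]$. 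You bypass that machinery entirely by keeping all the almost-invariance data inside the commutative subalgebra $L^\infty(\Omega)$: Day--Reiter produces $L^1$-almost-invariant densities, the relevant commutators are computed \emph{exactly} as $[\,\iota(\lambda_g),d_\alpha^{s}]=(\theta_g(d_\alpha^{s})-d_\alpha^{s})\rtimes\lambda_g$, and the fractional-power estimates reduce to the scalar inequality $|a^{s}-b^{s}|\leq|a-b|^{s}$. This shortcut is legitimate precisely because for Theorem \ref{thm:IsometricInclusion} the centralizing vectors may be chosen in $L^2(\Omega)$; it would \emph{not} be available for the extrapolation result of Section \ref{sct:AmenableHom} (Theorem \ref{thm:Extrpolation}), where the centralizing vectors live in the noncommutative $L^2(\Omega\rtimes_\theta\Gamma)$ and Ricard's lemma is genuinely needed --- which is presumably why the paper runs the argument in the more robust form. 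Two further points where your write-up is in fact sharper than the paper's: you observe that the intertwining \eqref{dia:Intertwining} holds \emph{exactly} at every index $\alpha$ (not merely after taking the ultraproduct), and your contraction step invokes the standard functoriality of state-preserving embeddings rather than the paper's hands-on factorization $x=yz$ at $p=1$; both are correct, and your duality argument for the lower bound, including the explicit error term controlled by H\"older and the commutator estimate, is sound.
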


This result does not provide new examples of groups for which both the Fourier and Herz-Schur multipliers have the same norm in $L^p$ since every group $\Gamma$ admitting a Zimmer-amenable action with an invariant mean is amenable. 

In the search for examples beyond amenable groups we explore if changing the natural embedding $\L \Gamma \into L^\infty(\Omega) \rtimes_\theta \Gamma$ by other completely positive maps $\Phi: \L \Gamma \to L^\infty(\Omega) \rtimes_\theta \Gamma$ will give weaker conditions than the existence of an invariant mean. Since we want to intertwine the operators $T_m$ and $\Id \rtimes T_m$ the map $\Phi$ must be of the form
\[
  \lambda_g \xmapsto{ \quad \Phi \quad } \varphi_g \rtimes \lambda_g,
\]
for some collection of functions $(\varphi_g)_{g \in \Gamma}$. Such maps, which we will call \emph{equivariant}, can be easily classified as follows

\begin{ltheorem}
  \label{thm:EquivariantMap}
  Let $\Phi:\L \Gamma \to  L^\infty(\Omega) \rtimes_\theta \Gamma$ be a normal and equivariant map of symbol $(\varphi_g)_{g \in \Gamma}$ then
  \begin{enumerate}[leftmargin=1.25cm, label={\rm \textbf{(\roman*)}}, ref={\rm {(\roman*)}}]
    \item \label{itm:EquivariantMap1}
    $\Phi$ is a $\ast$-homomorphism iff $\varphi: \Gamma \to L^\infty(\Omega; \TT)$ is a multiplicative $1$-cocycle.
    \item \label{itm:EquivariantMap2}
    $\Phi$ is unital and completely positive iff there exists a Hilbert space $H$ and a multiplicative $1$-cocycle $\kappa: \Gamma \to L^\infty(\Omega; \, H)$ and a unit vector $\xi \in H$ such that
    \[
      \varphi_g(\omega) = \langle \xi, \kappa_g(\omega) \, \xi \rangle
    \] 
    \item \label{itm:EquivariantMap3}
    $\Phi$ is completely bounded iff there is a Hilbert space $H$ and maps $\Xi$, $\Sigma \in L^\infty(\Omega \times \Gamma; H)$ such that 
    \begin{equation}
      \label{eq:Decomposition}
      \varphi_{g \, h^{-1}}(\omega)
      \, = \, \big\langle \Xi_g, \Sigma_h \big\rangle(\theta_{g} \, \omega)
      \, = \, \big\langle \Xi_g(\theta_{g} \, \omega), \Sigma_h(\theta_{g} \, \omega) \big\rangle.
    \end{equation}
    Furthermore it holds that
    \[
      \big\| \Phi: \L \Gamma \to L^\infty(\Omega) \rtimes_\theta \Gamma \big\|_\cb
      \, = \, 
      \inf \Big\{ \| \Sigma \|_{L^\infty(\Omega \times \Gamma; H)} \, \| \Xi \|_{L^\infty(\Omega \times \Gamma; H)} \Big\}
    \]
    where the infimum is taken over all the $\Sigma$ and $\Xi$ as in \eqref{eq:Decomposition}.
  \end{enumerate}
\end{ltheorem}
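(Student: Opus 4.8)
The plan is to treat the three parts separately: \ref{itm:EquivariantMap1} by a direct computation in the crossed product, \ref{itm:EquivariantMap2} through a Stinespring-type dilation combined with a Kolmogorov decomposition, and \ref{itm:EquivariantMap3} via the Haagerup--Paulsen--Smith characterization of completely bounded Schur multipliers.

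For \ref{itm:EquivariantMap1} I would simply expand the product. Writing $\alpha_g$ for the action of $\Gamma$ on $L^\infty(\Omega)$ and using $\lambda_g f = \alpha_g(f)\lambda_g$, one computes $\Phi(\lambda_g)\Phi(\lambda_h) = (\varphi_g\rtimes\lambda_g)(\varphi_h\rtimes\lambda_h) = \varphi_g\,\alpha_g(\varphi_h)\rtimes\lambda_{gh}$, whereas $\Phi(\lambda_{gh}) = \varphi_{gh}\rtimes\lambda_{gh}$. Hence multiplicativity is exactly the identity $\varphi_{gh} = \varphi_g\,\alpha_g(\varphi_h)$, which is the multiplicative $1$-cocycle condition. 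That $\Phi$ is moreover $\ast$-preserving and unital forces $\Phi(\lambda_g)$ to be a unitary, and computing $(\varphi_g\rtimes\lambda_g)^\ast(\varphi_g\rtimes\lambda_g) = \alpha_{g^{-1}}(|\varphi_g|^2)\rtimes 1$ shows $|\varphi_g| = 1$ a.e., i.e. $\varphi$ takes values in $\TT$; conversely a $\TT$-valued cocycle makes $\Phi$ a unital $\ast$-homomorphism.

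For the ``if'' direction of \ref{itm:EquivariantMap2}, given a unitary cocycle $\kappa$ and a unit vector $\xi$, I would first observe that $\lambda_g\mapsto\kappa_g\rtimes\lambda_g$ is a unital $\ast$-homomorphism $\pi$ from $\L\Gamma$ into $(L^\infty(\Omega)\,\bar\otimes\,\B(H))\rtimes_\theta\Gamma$, which is \ref{itm:EquivariantMap1} applied in the $\B(H)$-valued setting. Composing $\pi$ with the vector-state slice $\Id\rtimes\langle\xi,\,\cdot\,\xi\rangle\rtimes\Id$, a unital completely positive map onto $L^\infty(\Omega)\rtimes_\theta\Gamma$, yields $\Phi$, and the composition of a $\ast$-homomorphism with a slice state is automatically unital completely positive. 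For the converse I would transport complete positivity of $\Phi$ to an $L^\infty(\Omega)$-valued positive-definite kernel: conjugating the positive matrix $[\Phi(\lambda_{g_i}^\ast\lambda_{g_j})]_{ij}$ by the diagonal unitary $\mathrm{diag}(\lambda_{g_i})$ and applying the canonical conditional expectation $E:\M\to L^\infty(\Omega)$ entrywise produces the kernel $K(g,h) = \alpha_g(\varphi_{g^{-1}h})\in L^\infty(\Omega)$, which is positive-definite since $E$ is completely positive. A Kolmogorov decomposition then gives a Hilbert space $H$ and a measurable field $g\mapsto\eta_g\in L^\infty(\Omega;H)$ with $K(g,h)(\omega) = \langle\eta_g(\omega),\eta_h(\omega)\rangle$. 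Unitality gives $\varphi_e = 1$, so $\eta_e$ is a unit vector field, and after rotating by a measurable family of unitaries I may assume $\eta_e(\omega)\equiv\xi$ is constant. The equivariance $K(gk,gk')(\omega) = K(k,k')(\theta_g^{-1}\omega)$ shows that $\eta_k(\theta_g^{-1}\omega)\mapsto\eta_{gk}(\omega)$ extends to unitaries $\kappa_g(\omega)$ obeying the cocycle relation $\kappa_{gh} = \kappa_g\,\alpha_g(\kappa_h)$, whence $\varphi_k(\omega) = \langle\xi,\eta_k(\omega)\rangle = \langle\xi,\kappa_k(\omega)\xi\rangle$. The main obstacle here is measurability: ensuring that the Kolmogorov field and the family $\kappa_g(\cdot)$ can be chosen measurably, and that each $\kappa_g(\omega)$ extends to a genuine unitary of a fixed $H$ after stabilizing $H$ (e.g. tensoring with $\ell^2$) to match the orthogonal complements of the $\omega$-dependent spans.

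Finally, for \ref{itm:EquivariantMap3} I would reduce complete boundedness of $\Phi$ to that of an associated Herz--Schur-type multiplier and then invoke the classical factorization theorem. Embedding $L^\infty(\Omega)\rtimes_\theta\Gamma$ into $\B(L^2(\Omega)\otimes\ell^2\Gamma)$ and tracking how $\Phi$ acts on matrix units, complete boundedness of $\Phi$ becomes equivalent to that of the Schur multiplier on $\B(L^2(\Omega)\otimes\ell^2\Gamma)$ whose symbol, in the variables $(g,\omega)$ and $(h,\omega)$, is $\varphi_{gh^{-1}}(\theta_g\omega)$. By the Haagerup--Paulsen--Smith characterization, such a multiplier is completely bounded, with cb-norm equal to $\inf\|\Xi\|_\infty\|\Sigma\|_\infty$, exactly when its symbol admits a Gram factorization $\langle\Xi_g(\theta_g\omega),\Sigma_h(\theta_g\omega)\rangle$ by bounded measurable fields into some Hilbert space $H$, which is precisely \eqref{eq:Decomposition}. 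The delicate point is to make this Schur-multiplier dictionary precise in the measurable crossed-product setting: the argument $\theta_g\omega$ is the crossed-product analogue of the Herz--Schur shift from the pair $m(g),m(h)$ to $m(gh^{-1})$, and carrying the factorization through with measurable selections while preserving the exact value of the norm is where the real work lies.
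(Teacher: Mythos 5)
Your parts \ref{itm:EquivariantMap1} and \ref{itm:EquivariantMap2} are essentially sound, though they run slightly differently from the paper. For \ref{itm:EquivariantMap1} you exploit the standing hypothesis that $\Phi$ is normal, so the algebraic identity $(\varphi_g\rtimes\lambda_g)(\varphi_h\rtimes\lambda_h)=\varphi_g\,\theta_g(\varphi_h)\rtimes\lambda_{gh}$ on $\CC[\Gamma]$ plus separate weak-$\ast$ continuity of multiplication settles everything; the paper instead builds a Fell-absorption unitary $W$ that spatially implements $\Phi$, which additionally produces the normal extension when one starts only from a cocycle. For \ref{itm:EquivariantMap2} your kernel $K(g,h)=\theta_g(\varphi_{g^{-1}h})$, its positive-definiteness via the conditional expectation, and its equivariance $K(gk,gk')(\omega)=K(k,k')(\theta_g^{-1}\omega)$ are all correct, and your pointwise Kolmogorov decomposition is the same GNS idea that the paper carries out at the level of $W^\ast$-Hilbert modules, where the embedding of the module completion into $L^\infty(\Omega;H)$ (via Junge--Sherman) disposes of the measurability problems you flag. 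One step you assert rather than prove: the isometries $\eta_k(\theta_g^{-1}\omega)\mapsto\eta_{gk}(\omega)$ satisfy the cocycle identity only on the cyclic spans, and an arbitrary unitary extension to the orthogonal complement destroys it; the repair is to trivialize the measurable field of cyclic subspaces (its dimension function is $\Gamma$-invariant) and extend by the \emph{identity} on the complement, which preserves both the cocycle relation and the matrix coefficient $\langle\xi,\kappa_g(\omega)\xi\rangle$.

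The genuine gap is in part \ref{itm:EquivariantMap3}. You claim that complete boundedness of $\Phi$ ``becomes equivalent'' to that of the multiplier with symbol $\varphi_{gh^{-1}}(\theta_g\omega)$, but only one direction is a restriction argument: if that multiplier is bounded, then $\Phi$, being its restriction along $\iota:L^\infty(\Omega)\rtimes_\theta\Gamma\into L^\infty(\Omega)\weaktensor\B(\ell^2\Gamma)$, is completely bounded. The converse direction -- which is precisely what the necessity of the factorization \eqref{eq:Decomposition} requires -- is a Bozejko--Fendler-type statement and does not follow by ``tracking matrix units'': the paper proves it by constructing a normal $\ast$-embedding $\pi:L^\infty(\Omega)\weaktensor\B(\ell^2\Gamma)\to\big(L^\infty(\Omega)\rtimes_\theta\Gamma\big)\weaktensor\B(\ell^2\Gamma)$ and an intertwining identity $\pi\circ H_m=(\Phi\otimes\Id)\circ\pi$, so that complete boundedness of $\Phi$ (boundedness of $\Phi\otimes\Id$) forces boundedness of the multiplier; this is exactly where the cb hypothesis, as opposed to mere boundedness, enters (see Remark \ref{rmk:NecessityOfCB}). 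Moreover, the classical Haagerup--Paulsen--Smith theorem you invoke does not apply off the shelf: the relevant operator is not a measurable Schur multiplier on $\B(L^2(\Omega\times\Gamma))$ (a symbol of that kind would depend on two independent $\Omega$-variables, whereas yours lives on the diagonal, a null set when $\Omega$ is diffuse), but an $L^\infty(\Omega)$\emph{-valued} Schur multiplier, i.e.\ an $L^\infty(\Omega)\weaktensor\ell^\infty(\Gamma)$-bimodular map. The factorization theorem for these is the paper's Lemma \ref{lem:LinftyValuedSchurs}, whose proof is itself nontrivial (discretization through a filtration of atomic $\sigma$-subalgebras, the scalar case on each piece, and an ultraproduct of the resulting Hilbert spaces to assemble $\Xi$ and $\Sigma$). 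Without this lemma, or an equally precise substitute, together with the intertwining argument above, your part \ref{itm:EquivariantMap3} does not close, and in particular the exact norm equality $\|\Phi\|_\cb=\inf\|\Xi\|_\infty\|\Sigma\|_\infty$ is not established.
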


Observe that the result above is a straightforward generalization of \cite{Jolissaint1992Multipliers}, in the case of \ref{itm:EquivariantMap3} as well as the classical correspondence between positive type functions and group representations, see \cite[Section 3.3]{Foll1995} and \cite[Appendix C]{BeHarVal2008}. Although the theorem above is not surprising it seems that it is not in the literature, therefore we have chosen to include it in its full generality, although we will just use the points \ref{itm:EquivariantMap1} and \ref{itm:EquivariantMap2}. 

\textbf{Weak containment of correspondences and transference.}
In Sections \ref{sct:AmenableHom} and \ref{sct:AmenableCP} we will connect the transference technique described above with the theory of weak containment of correspondences over von Neumann algebras. Given two von Neumann algebras $\N$ and $\M$, an \emph{$\N$-$\M$-correspondence} is a Hilbert space $H$ with two normal and commuting representation that turn $H$ into an $\N$-$\M$-bimodule. Those objects form a natural category with the bounded bimodular maps as their morphisms, see \cite{Popa1986Notes}, \cite[Appendix B]{ConnesBook1994} or \cite[Chapter 13]{AnaPopa2017II1}. This category behaves in the context of von Neumann algebras in a way that is reminiscent of the category of representations in the context of groups, see \cite{ConnesJones1985}. Following this common analogy, the trivial $\N$-$\N$-bimodule $L^2(\N)$ is understood as an analogue of the trivial group representation $\1: \Gamma \to \CC$. Correspondences admit a natural notion of weak containment and a $\N$-$\M$ correspondence $H$ is said to be \emph{(left) amenable}
iff $H \weaktensor_\M \overline{H}$ weakly contains the trivial bimodule, see \cite{Anan1995AmenableCorr}. This notion generalizes the definition of amenable representation introduced in \cite{Bekka1990} in which $\rho$ is amenable iff $\1 \prec \rho \otimes \overline{\rho}$. Using a common GNS-type construction, we can associate to each completely positive map $\Phi: \N \to \M$ a cannonical $\N$-$\M$ correspondence $H(\Phi)$. The map $\Phi$ is said to be left amenable precisely when $H(\Phi)$ is. The key observation on Sections \ref{sct:AmenableHom} and \ref{sct:AmenableCP} is that the transference techniques used in \cite{NeuRic2011, CasParPerrRic2014, CasParPerrRic2014, Gon2018CP} can be understood as an extrapolation result by which the existence of a weak containment expressed in terms of Hilbert bimodules can be extended to noncommutative $L^p$-bimodules, see Theorem \ref{thm:Extrpolation}.

In order to explain this idea, in the particular case of $\ast$-homomorphism, let us denote $\N = \L \Gamma$. We need to recall that if $\pi: \N \to \M$ is some normal $\ast$-homomorphism into a semifinite von Neumann algebra $\M$, we have that $\pi$ is left amenable in the sense of \cite{Anan1995AmenableCorr} iff the trivial $\N$-bimodule $L^2(\N)$ is weakly contained in the $\N$-bimodule $L^2(\M)$, where the left and right actions are given by multiplication
\[
  x \cdot \xi \cdot y 
  \, = \,
  \pi(x) \, \xi \, \pi(y).
\]
But $L^2(\N) \prec L^2(\M)$ iff there is a $\N$-bimodular isometry
\[
  L^2(\N) \xrightarrow{\quad J \quad} \prod_\U L^2(\M)^\U,
\]
where the space on the right hand side is the bimodule given by a proper ultrapower. The insight is that this bimodular map at the $L^2$-level exists iff there is a $\N$-bimodular isometry $J_p: L^p(\N) \to L^p(\M)$. Similarly, if $J$ satisfy an intertwining identity between multiplier operators, so does $J_p$. Thus, transference theorem for $L^p$-spaces can be seen as extrapolation theorems in which an inclusion of Hilbert $\N$-bimodules is generalize to an inclusion of $L^p$ bimodules. Then, in Theorem \ref{thm:AmenableEquivariant} and Theorem \ref{thm:Amenable1CocyclesTrans} we classify which equivariant $\ast$-homomorphisms and completely positive maps respectively are amenable and admit an isometry $J$ intertwining the required multiplier operators. Our quest to find new examples beyond the need for invariant mean fails, since we obtain conditions that imply the existence of such a mean. Nevertheless, we consider that the resulting theorems may still be of interest.

An important question left open by the approach of this paper is the necessity of the $\N$-bimodularity on the extrapolation argument. In principle, just the existence of an intertwining $L^p$-isometry will suffice to prove a transference theorem. Nevertheless, it seems that all the current proofs in the literature use a certain bimodularity in order to prove the $L^p$-isometric character of $J_p$ and it is still open whether this condition is necessary or not.

\section{Equivariant maps into crossed products}
Let $\Gamma$ be a discrete countable group acting on a $\sigma$-finite measure space $(\Omega, \mu)$ by measure-preserving transformations. Denote the action by $\theta: \Gamma \to \Aut(\Omega, \mu)$. Recall that the \emph{group von Neumann algebra of $\Gamma$}, denoted by $\L \Gamma \subset \B(\ell^2 \Gamma)$ is the von Neumann algebra given by
\[
  \{\lambda_g : g \in \Gamma \}'' = \wstspan \{ \lambda_g : g \in \Gamma \} \subset \B(\ell^2 \Gamma),
\]
where $\lambda: \Gamma \to \B(\ell^2 \Gamma)$ is the left regular representation given by $\lambda_g(\delta_h) = \delta_{g \, h}$, where $g, h \in \Gamma$ and $(\delta_h)_{h \in \Gamma}$ is the canonical orthonormal base of placeholder functions in $\ell^2 \Gamma$.

The \emph{crossed-product} von Neumann algebra $L^\infty(\Omega) \rtimes_\theta \Gamma$, also called the \emph{group measure space} construction of the action $\theta: \Gamma \to \Aut(\Omega, \mu)$, is given the von Neumann algebra
\[
  L^\infty(\Omega) \rtimes_\theta \Gamma \subset \B(L^2 \Omega \otimes_2 \ell^2 \Gamma)
\]
generated by the representation $\1 \otimes \lambda: \Gamma \to \B(L^2 \Omega \otimes_2 \ell^2 \Gamma)$ and the $\ast$-homomorphism $\pi: L^\infty(\Omega) \to \B(L^2 \Omega \otimes_2 \ell^2 \Gamma)$ 
\[
  \pi(f) 
  \, = \,
  \sum_{h \in \Gamma} \theta_{h^{-1}}(f) \otimes e_{h, h},
\]
where $\theta_g(f)(\omega) = f(\theta_{g^{-1}} \omega)$. Observe that the algebra $L^\infty(\Omega) \rtimes_\theta \Gamma$ is given by by the weak-$\ast$ closure of finite sums of the form
\[
  x = \sum_{g \in \Gamma} f_g \rtimes \lambda_g,
\]
where $f \rtimes \lambda_g$ is just shorthand notation for the operator $\pi(f) \cdot (\1 \otimes \lambda_g)$.

Let $\tau: \L \Gamma \to \CC$ be the canonical trace of $\L \Gamma$, given by the vector state $\tau_\Gamma(x) = \langle \delta_e, x \delta_e \rangle$, that we will denote simply as $\tau$ when no ambiguity is present. The map $\EE: L^\infty(\Omega) \rtimes \Gamma \to L^\infty(\Omega)$, given by restriction to $L^\infty(\Omega) \rtimes_\theta \Gamma$ of $\Id \otimes \tau$ is a normal conditional expectation. A straightforward calculation gives that, over finite sums, it takes the form
\[
  \EE \Big( \sum_{g \in \Gamma} f_g \rtimes \lambda_g \Big) = f_e
\]
Furthermore it satisfies the following properties
\begin{itemize}
  \item It is \emph{faithful}, ie for every $x \geq 0$, $\EE[x] = 0$ iff $x = 0$.
  \item It is \emph{equivariant}, ie $\displaystyle{\EE \big[\lambda_g \, x \, \lambda_g^\ast \big] = \theta_g \EE[x]}$.
\end{itemize}
See \cite[Proposition 4.1.9]{BroO2008} for both facts above. Using those properties we obtain that every measure $\nu:L^\infty(\Omega)_+ \to [0,\infty]$ gives a tracial weight $\tau_\nu = \nu \rtimes \tau_\Gamma: (L^\infty(\Omega) \rtimes \Gamma)_+ \to [0,\infty]$ given by $\tau_\nu = \nu \circ \EE$, when $\nu$ is $\theta$-invariant. We will denote the canonical tracial weight associated with $\mu$ by $\tau_\rtimes$. The noncommutative $L^p$-spaces associated to both $(\L \Gamma, \tau)$ and $(L^\infty \Omega \rtimes_\theta \Gamma, \tau_\rtimes)$ would be denoted by $L^p(\L \Gamma)$ and $L^p(\Omega \rtimes_\theta \Gamma)$ respectively.

Observe also that both $\L \Gamma$ and $L^\infty(\Omega) \rtimes_\theta \Gamma$ admit the following natural comultiplication and coaction maps.

\begin{proposition}
  \label{prp:Coactions}
  There are normal $\ast$-homomorphisms
  \begin{align}
    \Delta:& \, \L \Gamma \tto \L \Gamma \weaktensor \L \Gamma,\\
    \Delta_\rtimes:& \, L^\infty(\Omega) \rtimes_\theta \Gamma
      \tto \big( L^\infty(\Omega) \rtimes_\theta \Gamma \big) \weaktensor \L \Gamma,
  \end{align}
  given by extension of $\lambda_g \mapsto \lambda_g \otimes \lambda_g$ and $f \rtimes \lambda_g \mapsto (f \rtimes \lambda_g) \otimes \lambda_g$ respectively. Those maps satisfy the following coassociativity properties
  \[
    (\Delta \otimes \Id) \circ \Delta = (\Id \otimes \Delta) \circ \Delta
    \quad \mbox{ and } \quad
    (\Delta_\rtimes \otimes \Id) \circ \Delta = (\Id \otimes \Delta) \circ \Delta.
  \]
\end{proposition}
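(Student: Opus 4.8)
The plan is to realise both $\Delta$ and $\Delta_\rtimes$ as \emph{spatial} maps, obtained by amplifying and then conjugating by a fixed unitary. This is convenient because a map of the form $\Ad(U^\ast)\circ(\,\cdot\,\otimes 1)$ is automatically a normal $\ast$-homomorphism, so the only things left to check are: (i) that it sends the generators to the prescribed elements; (ii) that its range lands inside the von Neumann tensor product; and (iii) the two coassociativity identities. Points (i) and (iii) are then verified on the weak-$\ast$ generating set, and (ii) follows from normality together with the fact that $\L\Gamma$ and $L^\infty(\Omega)\rtimes_\theta\Gamma$ are the weak-$\ast$ closures of the linear spans of their generators.

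Concretely, I would introduce the multiplicative unitary $V\in\B(\ell^2\Gamma\otimes_2\ell^2\Gamma)$ defined on the canonical basis by $V(\delta_s\otimes\delta_t)=\delta_s\otimes\delta_{s^{-1}t}$, whose adjoint is $V^\ast(\delta_s\otimes\delta_t)=\delta_s\otimes\delta_{st}$. A one-line computation on basis vectors gives
\[
  V^\ast(\lambda_g\otimes 1)V=\lambda_g\otimes\lambda_g,
\]
so that $\Delta:=\Ad(V^\ast)\circ(\,\cdot\,\otimes 1)$ restricts to a normal $\ast$-homomorphism on $\L\Gamma$ with $\Delta(\lambda_g)=\lambda_g\otimes\lambda_g$; since each $\lambda_g\otimes\lambda_g$ lies in $\L\Gamma\weaktensor\L\Gamma$ and $\Delta$ is weak-$\ast$ continuous, its range is contained in $\L\Gamma\weaktensor\L\Gamma$. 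For the coaction I would work on $\B(L^2\Omega\otimes_2\ell^2\Gamma\otimes_2\ell^2\Gamma)$, amplify on the third leg, and conjugate by $W:=1\otimes V$ (acting on the two $\ell^2\Gamma$ legs), setting $\Delta_\rtimes:=\Ad(W^\ast)\circ(\,\cdot\,\otimes 1)$. The same computation as above yields $\Delta_\rtimes(1\otimes\lambda_g)=(1\otimes\lambda_g)\otimes\lambda_g$, while a short check on basis vectors shows that $V^\ast(e_{h,h}\otimes 1)V=e_{h,h}\otimes 1$, whence $\Delta_\rtimes(\pi(f))=\pi(f)\otimes 1$. Multiplying these two identities gives $\Delta_\rtimes(f\rtimes\lambda_g)=(f\rtimes\lambda_g)\otimes\lambda_g$, and normality again places the range inside $\big(L^\infty(\Omega)\rtimes_\theta\Gamma\big)\weaktensor\L\Gamma$.

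Finally, the coassociativity relations are checked on generators, where they become the evident identities $\lambda_g\mapsto\lambda_g\otimes\lambda_g\otimes\lambda_g$ and $f\rtimes\lambda_g\mapsto(f\rtimes\lambda_g)\otimes\lambda_g\otimes\lambda_g$ (the second displayed identity in the statement should of course read $(\Delta_\rtimes\otimes\Id)\circ\Delta_\rtimes=(\Id\otimes\Delta)\circ\Delta_\rtimes$); since both sides are normal $\ast$-homomorphisms agreeing on a weak-$\ast$ generating set, they coincide. I expect the only genuinely delicate step to be the verification that conjugation by $W$ leaves $\pi(f)$ untouched: this is where the specific form $\pi(f)=\sum_h\theta_{h^{-1}}(f)\otimes e_{h,h}$ enters, and one must see that $V$ only shuffles the second $\ell^2\Gamma$ variable in a way that is invisible to the diagonal operators $e_{h,h}\otimes 1$. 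Everything else is a routine propagation of the generator-level identities through weak-$\ast$ continuity.
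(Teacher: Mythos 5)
Your proof is correct and follows essentially the same route as the paper, which justifies the proposition by appeal to the Fell absorption principle: your unitary $V$ (and $W = \1 \otimes V$) is precisely the unitary that implements Fell absorption, so your argument simply makes the cited principle explicit, including the key check that $V^\ast(e_{h,h}\otimes \1)V = e_{h,h}\otimes \1$ so that $\pi(f)$ is fixed. You are also right that the paper's second coassociativity identity contains a typo and should read $(\Delta_\rtimes \otimes \Id)\circ\Delta_\rtimes = (\Id \otimes \Delta)\circ\Delta_\rtimes$.
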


The proposition above follows easily from the Fell absorption principle for representations and the Fell absorption principle for actions, see \cite[Propositions 4.1.7]{BroO2008}. We will use a form of the Fell absorption principle in the proof of Theorem \ref{thm:EquivariantMap} \ref{itm:EquivariantMap1}. Notice that in the case of $\Gamma$ Abelian, the map $\Delta$ is just the pullback over functions of the multiplication of $\widehat{\Gamma}$, the Pontryagin dual of $\Gamma$, while the map $\Delta_\rtimes$ is the pullback of the dual action $\widehat{\theta}: \widehat{\Gamma} \to \Aut(L^\infty \Omega \rtimes_\theta \Gamma)$, given by sending $\chi \in \widehat{\Gamma}$ into $f \rtimes \lambda_g \mapsto \langle \chi, g \rangle \, f \rtimes \lambda_g$. 

A map $T: \L \Gamma \to \L \Gamma$ is $\Delta$-equivariant $(\Id \otimes \Delta) \circ T = (\Id \otimes T) \circ \Delta$ iff it is a \emph{Fourier multiplier}, ie a map given by extension of
\[
  \lambda_g \longmapsto m(g) \, \lambda_g
\]
for some function $m \in \ell^\infty(\Gamma)$. That operator is called the \emph{Fourier multiplier of symbol $m$} and denote $T = T_m$, is we want to make the dependence on the symbol explicit. A map $\Phi:\L \Gamma \to L^\infty(\Omega) \rtimes_\theta \Gamma$ is equivariant iff $(\Id \otimes \Delta) \circ T = (T \otimes \Id) \circ \Delta_\rtimes = (\Id \otimes T) \circ \Delta_\rtimes$. Those operators are given by linear extension of the map 
\[
  \Phi(\lambda_g)
  \, = \,
  \varphi_g \rtimes \lambda_g, 
\]
for some symbol $\varphi: \Gamma \to L^\infty(\Omega)$. 

Let $\G$ be a topological group. The map $\kappa: \Gamma \to L^\infty(\Omega;\G)$ is a multiplicative $1$-cocycle, or simply a $1$-cocycle if it satisfies that
\[
  \kappa_{g \, h} = \kappa_{g} \, \theta_g(\kappa_h),
\]
where $\theta_g(\kappa)(\omega) = \kappa(\theta_g^{-1} \omega)$. We will say that $\kappa$ is a unitary $1$-cocycle if $\G = \U(H)$ for some Hilbert space $H$.

The following lemma is a trivial application of the embedding $\L \Gamma \into L^\infty(\Omega) \weaktensor \B(\ell^2 \Gamma)$.

\begin{lemma}
  \label{lem:CP}
  A equivariant map $\Phi: \L \Gamma \to L^\infty(\Omega) \rtimes_\theta \Gamma$, given by $\lambda_g \mapsto \varphi_g \rtimes \lambda_g$, is positivity preserving iff for every finite set $S = \{g_1, g_2, ..., g_r\} \subset \Gamma$ the matrices
  \[
    \big[ \theta_{g_i^{-1}}(\varphi_{g_i \, g_j^{-1}}) \big]_{i,j} \in  L^\infty(\Omega) \otimes M_S(\CC) .
  \]
  are positive definite. In that case, the map $\Phi$ is also completely positive.
\end{lemma}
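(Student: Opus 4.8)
The plan is to use the embedding $\L \Gamma \into L^\infty(\Omega) \weaktensor \B(\ell^2 \Gamma)$, $\lambda_g \mapsto \1 \otimes \lambda_g$, together with the realisation of $L^\infty(\Omega) \rtimes_\theta \Gamma$ as $L^\infty(\Omega)$-valued matrices indexed by $\Gamma$, to recognise $\Phi$ as (the restriction of) a Schur multiplier. First I would record the one-line computation $(\varphi_g \rtimes \lambda_g)(\xi \otimes \delta_k) = \theta_{(gk)^{-1}}(\varphi_g)\,\xi \otimes \delta_{gk}$. Writing $\rho$ for the inclusion of the crossed product into $L^\infty(\Omega) \weaktensor \B(\ell^2 \Gamma)$ and $\iota$ for $\lambda_g \mapsto \1 \otimes \lambda_g$, this shows that $\rho \circ \Phi = S_M \circ \iota$, where $S_M$ is the Schur multiplier $[a_{s,k}] \mapsto [M_{s,k}\,a_{s,k}]$ with symbol $M_{s,k} = \theta_{s^{-1}}(\varphi_{s k^{-1}})$. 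Compressing by the projection $P_S = \1 \otimes \sum_i e_{g_i,g_i}$ onto the block indexed by $S$, the $(g_i,g_j)$ entry of the symbol is exactly $\theta_{g_i^{-1}}(\varphi_{g_i g_j^{-1}})$, so the matrices in the statement are precisely the finite sections $P_S M P_S$ of $M$.

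For the direction assuming the sections positive, I would note that positivity of all $P_S M P_S$ means exactly that $M$ is a positive-definite $L^\infty(\Omega)$-valued kernel on $\Gamma$, i.e. that for a.e. $\omega$ the scalar matrix $[M_{s,k}(\omega)]$ is positive semidefinite on every finite block. By the operator-valued Schur product theorem applied fibrewise over $\Omega$ (the Hadamard product of a positive kernel with a positive operator is positive), $S_M$ is completely positive on $L^\infty(\Omega) \weaktensor \B(\ell^2 \Gamma)$. Since $\iota$ and $\rho$ are normal unital $\ast$-homomorphisms, hence complete order embeddings, $\rho \circ \Phi = S_M \circ \iota$ is completely positive, and therefore so is $\Phi$; in particular it is positivity preserving. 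This already gives the final clause of the statement.

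For the converse I would feed the canonical positive element $w w^\ast = [\lambda_{g_i g_j^{-1}}]_{i,j} = (\lambda_{g_1}, \dots, \lambda_{g_r})^\ast (\lambda_{g_1}, \dots, \lambda_{g_r}) \in M_r(\L \Gamma)$ into the amplification of $\Phi$ and pair the resulting positive matrix $[\varphi_{g_i g_j^{-1}} \rtimes \lambda_{g_i g_j^{-1}}]_{i,j}$ against $\Xi = (\xi_i \otimes \delta_{g_i})_i$. The same computation as above collapses the shifts, since $\Phi(\lambda_{g_i g_j^{-1}})(\xi_j \otimes \delta_{g_j}) = \theta_{g_i^{-1}}(\varphi_{g_i g_j^{-1}})\,\xi_j \otimes \delta_{g_i}$, so the pairing equals $\sum_{i,j} \langle \xi_i, \theta_{g_i^{-1}}(\varphi_{g_i g_j^{-1}})\,\xi_j\rangle$; letting the $\xi_i$ range over $L^2(\Omega)$ and localising on measurable sets then yields pointwise-a.e. positive semidefiniteness of the sections. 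The main obstacle is exactly this step, and it is the reason the statement bundles positivity with complete positivity: one must test the amplification $\Id_{M_r} \otimes \Phi$ on $w w^\ast$ rather than $\Phi$ on a single positive element, because the naive candidate $\Phi\big(\sum_{i,j}\lambda_{g_i g_j^{-1}}\big)$ over-counts the coincidences $g_i g_j^{-1} = g_k g_l^{-1}$ and only produces a multiplicity-weighted section. The careful bookkeeping is therefore to keep the index pairs separated through the matrix amplification — which is precisely the content of the standard equivalence between complete positivity and the kernel $g \mapsto \Phi(\lambda_g)$ being operator positive-definite — and to match the conjugations $\theta_{g_i^{-1}}$ correctly when passing to the block $P_S M P_S$.
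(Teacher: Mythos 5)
Your forward direction is correct and is essentially the paper's own argument: you pass through the inclusions $\iota:\L \Gamma \into L^\infty(\Omega) \weaktensor \B(\ell^2 \Gamma)$ and $\rho: L^\infty(\Omega) \rtimes_\theta \Gamma \into L^\infty(\Omega) \weaktensor \B(\ell^2 \Gamma)$, identify $\Phi$ with the restriction of the $L^\infty(\Omega)$-valued Schur multiplier of symbol $M_{s,k} = \theta_{s^{-1}}(\varphi_{s k^{-1}})$ (the same symbol as in the paper's diagram), and use the fibrewise Schur product theorem to conclude that positivity of all finite sections makes this multiplier, hence $\Phi$, completely positive. This also delivers the last clause of the lemma.

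The converse is where your proposal has a genuine gap. The lemma asserts that if $\Phi$ is merely \emph{positivity preserving}, then the sections $\big[\theta_{g_i^{-1}}(\varphi_{g_i g_j^{-1}})\big]_{i,j}$ are positive. Your argument feeds the positive matrix $\big[\lambda_{g_i g_j^{-1}}\big]_{i,j} \in M_r(\L \Gamma)$ into $\Id_{M_r} \otimes \Phi$ and needs the output to be positive, which is an instance of $r$-positivity of $\Phi$ --- precisely what is \emph{not} granted by the hypothesis. Your closing justification, that the statement ``bundles positivity with complete positivity,'' inverts the logic of the lemma: the sentence ``in that case, the map $\Phi$ is also completely positive'' is a \emph{conclusion} of the equivalence (mere positivity forces positive sections, which then upgrade $\Phi$ to completely positive), not a hypothesis available when proving the ``only if'' direction. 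What you have actually proved is ``sections positive $\Longleftrightarrow$ $\Phi$ completely positive,'' which is strictly weaker than the stated claim. Your own diagnosis of why the naive test fails (the coincidences $g_i g_j^{-1} = g_k g_l^{-1}$ when applying $\Phi$ to the single element $\sum_{i,j} \lambda_{g_i g_j^{-1}}$) shows exactly that a new idea is required in the $1$-positive case, and the amplification trick sidesteps the difficulty only by strengthening the hypothesis. The paper's route is different at this point: it invokes the equivalence ``positivity preserving iff positive definite symbol'' for the $L^\infty(\Omega)$-valued Schur multiplier $H_\Phi$ itself, where the converse is cheap because the all-ones block $\sum_{i,j} e_{g_i, g_j}$ is a legitimate positive test element of $\B(\ell^2 \Gamma)$; that element is not in the image of $\L \Gamma$, which is exactly why it is unavailable to you, and passing from positivity of $\Phi$ on $\iota(\L \Gamma)$ to positivity of $H_\Phi$ on all of $L^\infty(\Omega) \weaktensor \B(\ell^2 \Gamma)$ is the delicate step any complete proof of the stated lemma must address.
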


\begin{proof}
  Observe that we have that the following diagram commutes
  \begin{center}~
    \xymatrix@C=1.5cm{
      \L \Gamma \ar[rr]^{\Phi} \ar[d] & & L^\infty(\Omega) \rtimes_\theta \Gamma \ar[d]\\
      \B(\ell^2 \Gamma) \ar[rd]_{\1_\Omega \otimes \Id} & & L^\infty(\Omega) \weaktensor \B(\ell^2 \Gamma)\\
      & L^\infty(\Omega) \weaktensor \B(\ell^2 \Gamma) \ar[ru]_{H_\Phi} &
    }
  \end{center}
  where the vertical arrows are the natural inclusions and the map $H_\Phi$ is the $L^\infty(\Omega)$-valued Schur multiplier given by
  \begin{equation}
    \label{eq:EmbeddingMatrix}
    \sum_{g,k \in \Gamma} f_{g, k} \otimes e_{g, k}
    \, \mapsto \,
    \sum_{g,k \in \Gamma} f_{g, k} \, \theta_{g^{-1}}(\varphi_{g \, k^{-1}}) \otimes e_{g, k}.
  \end{equation}
  Using that Schur multipliers preserve positivity iff they are positive definite gives the result.  
\end{proof}

We will need the following technical lemma which extends Grothendieck/Haagerup decomposition of bounded Schur multipliers to the $L^\infty(\Omega)$-valued case, see \cite{BozejkoFendler1984HSFourier}. Although the lemma is straightforward we include its proof for the sake of completeness.

\begin{lemma}
  \label{lem:LinftyValuedSchurs}
  Let $(\Omega, \mu)$ be a $\sigma$-finite measure space as before and $S$ be a countable discrete set. Let $m \in L^\infty(S \times S \times \Omega)$ be a function and 
  \[
    \B(\ell^2 S) \xrightarrow{\quad H_m \quad} L^\infty(\Omega) \weaktensor \B(\ell^2 S)
  \]
  the normal and bounded operator given as the $L^\infty(\Omega)$-valued Schur multiplier of symbol $m$
  \[
    H_m \Big( \sum_{s,t \in S} a_{s,t} \, e_{s,t} \Big)
    \, = \, 
    \sum_{s,t \in S} m_{s,t}(\omega) \, a_{s,t} \otimes e_{s,t}.
  \]  
  Then, there exists a Hilbert space $H$ and $\Xi, \Sigma \in L^\infty(\Omega \times S;H)$ such that 
  \[
    m_{s,t}(\omega) = \big\langle \Xi_s(\omega), \Sigma_t(\omega) \big\rangle
  \]
  and 
  \[
    \| H_m \| = \inf \Big\{ \| \Xi \|_{L^\infty(\Omega;H)} \, \| \Sigma \|_{L^\infty(\Omega;H)} \Big\},
  \]
  Reciprocally, all operators $H_m$ with the above decomposition are bounded.
\end{lemma}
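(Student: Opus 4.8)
The plan is to split the proof into the easy (reciprocal) direction and the forward direction, extracting the norm identity from a fibrewise reduction to the scalar Grothendieck/Haagerup theorem of \cite{BozejkoFendler1984HSFourier}.

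First I would dispatch the reciprocal statement by an explicit Stinespring-type factorization. Given $\Xi, \Sigma \in L^\infty(\Omega \times S; H)$ with $m_{s,t}(\omega) = \langle \Xi_s(\omega), \Sigma_t(\omega) \rangle$, define for almost every $\omega$ the operators $A(\omega), B(\omega) : \ell^2 S \to \ell^2 S \otimes_2 H$ by $A(\omega) \delta_s = \delta_s \otimes \Xi_s(\omega)$ and $B(\omega) \delta_t = \delta_t \otimes \Sigma_t(\omega)$, whose norms are $\sup_s \| \Xi_s(\omega) \|$ and $\sup_t \| \Sigma_t(\omega) \|$. A direct computation on matrix units gives
\[
  H_m(a)(\omega) = A(\omega)^\ast \, (a \otimes 1_H) \, B(\omega), \qquad a \in \B(\ell^2 S).
\]
Since $a \mapsto a \otimes 1_H$ is a normal $\ast$-homomorphism, this exhibits $H_m$ as normal and completely bounded with $\| H_m \|_\cb \leq \esssup_\omega \| A(\omega) \| \, \| B(\omega) \| = \| \Xi \|_{L^\infty(\Omega \times S; H)} \| \Sigma \|_{L^\infty(\Omega \times S; H)}$, which yields both the reciprocal statement and the inequality $\| H_m \| \leq \inf \{ \cdots \}$.

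For the forward direction I would argue fibrewise. The first step is the norm identity $\| H_m \| = \esssup_\omega \| H_{m(\omega)} \|$, where $H_{m(\omega)}$ is the ordinary scalar Schur multiplier of symbol $(m_{s,t}(\omega))_{s,t}$ on $\B(\ell^2 S)$. Since the norm of a Schur multiplier is the supremum of the norms of its finite principal submatrices, it can be expressed as a supremum over a countable family of finitely supported contractions; as the entries $m_{s,t}$ are measurable, the function $\omega \mapsto \| H_{m(\omega)} \|$ is then a countable supremum of measurable functions, hence measurable. The inequality ``$\leq$'' is immediate, and ``$\geq$'' follows because a single finitely supported test matrix witnessing a value above $\| H_m \|$ on a set of positive measure would contradict the definition of $\| H_m \|$. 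In particular $\| H_{m(\omega)} \| < \infty$ for almost every $\omega$, so each fibre is a bounded scalar Schur multiplier.

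I would then invoke the scalar theorem in each fibre: for almost every $\omega$ there is a balanced factorization $m_{s,t}(\omega) = \langle \Xi_s(\omega), \Sigma_t(\omega) \rangle$ in $H = \ell^2 S$ with $\sup_s \| \Xi_s(\omega) \| = \sup_t \| \Sigma_t(\omega) \| = \| H_{m(\omega)} \|^{1/2}$. Assembling these into genuine elements of $L^\infty(\Omega \times S; H)$ with essential suprema controlled by $\esssup_\omega \| H_{m(\omega)} \| = \| H_m \|$ is the heart of the matter and the main obstacle: the factorizing vectors must be chosen measurably in $\omega$, over a possibly infinite index set $S$, with no loss in the essential supremum. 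Since the infimum in the statement need not be attained, it suffices to produce, for each $\varepsilon > 0$, a measurable near-optimal selection; I would obtain this from the Jankov--von Neumann measurable selection theorem applied to the set-valued map sending $\omega$ to the collection of balanced factorizations of $(m_{s,t}(\omega))_{s,t}$ with norms below $(\| H_{m(\omega)} \| + \varepsilon)^{1/2}$, whose graph is measurable (the defining relations and the norm bounds being measurable) and whose values are nonempty by the scalar theorem. This gives a decomposition with $\| \Xi \|_\infty \| \Sigma \|_\infty \leq \| H_m \| + \varepsilon$, hence $\inf \{ \cdots \} \leq \| H_m \|$, and combined with the easy direction yields $\inf \{ \cdots \} = \| H_m \|$.
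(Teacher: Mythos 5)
Your argument is correct, but it follows a genuinely different route from the paper's. The paper first proves the lemma when $\Omega$ is atomic (there the fibrewise scalar factorizations of \cite{BozejkoFendler1984HSFourier} can simply be summed into a direct sum $H = \ell^2(\Omega;\{H_\omega\}_{\omega})$, with no measurability issue), and then treats diffuse $\Omega$ by a martingale-type approximation: conditional expectations $\EE_\alpha$ onto atomic sub-$\sigma$-algebras satisfy $\EE_\alpha \circ H_m = H_{\EE_\alpha(m)}$, each discretized symbol is factorized via the atomic case, and the factorizations are glued through an ultraproduct Hilbert space $H = \prod_{\alpha,\U} H_\alpha$, using that $\EE_\alpha(m_{s,t}) \to m_{s,t}$ almost everywhere. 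You instead work fibrewise over the original $\Omega$: you establish the norm formula $\|H_m\| = \esssup_\omega \|H_{m(\omega)}\|$ (your countable-supremum measurability argument and the positive-measure-set contradiction are both sound), apply the scalar theorem in each fibre, and resolve measurability of the factorizing vectors by a selection theorem, settling for $\varepsilon$-optimal factorizations since the infimum in the statement need not be attained. Your route avoids ultraproducts entirely and makes the norm identity more transparent; what it costs is the selection machinery, and there you should be slightly more careful: the Jankov--von Neumann theorem, as usually stated, concerns analytic sets in products of Polish (standard Borel) spaces, whereas $(\Omega,\mu)$ is an abstract $\sigma$-finite measure space, so the appropriate tool is the Aumann--von Neumann (or Sainte-Beuve) selection theorem for product-measurable graphs over a complete $\sigma$-finite measure space; moreover, the bounded balls of $H = \ell^2 S$ should carry the norm topology rather than the weak one, since $(\xi,\eta) \mapsto \langle \xi, \eta \rangle$ is not jointly weakly continuous on bounded sets, and joint continuity in the Hilbert-space variables is what makes the constraint sets $\{ \langle \xi_s, \eta_t \rangle = m_{s,t}(\omega) \}$ product-measurable. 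These are repairs of citations and topologies, not of ideas. Finally, your explicit factorization $H_m(a)(\omega) = A(\omega)^\ast \, (a \otimes 1_H) \, B(\omega)$ gives a complete proof of the reciprocal direction, including automatic complete boundedness, which the paper's proof leaves implicit.
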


\begin{proof}
  First observe that if $(\Omega, \mu)$ is a countable discrete space, so that $L^\infty(\Omega) \cong \ell^\infty(\Omega)$, the result above follows easily from the scalar case, see \cite{BozejkoFendler1984HSFourier}. Indeed, using that the restriction of $H_m$ to $\CC \delta_{\omega} \otimes \B(\ell^\infty S)$ gives a normal Schur multiplier $H_{m(\omega)}: \B(\ell^2 S) \to \B(\ell^2 S)$ of symbol $m(\omega)$ and norm bounded by that of $H_m$. Furthermore, the fact that the norm of $\ell^\infty(\Omega) \weaktensor \B(\ell^2 S) = \ell^\infty(\Omega;\B(\ell^2 S))$, as well as all its matrix amplifications, can be taken as a supremum in $\Omega$ easily yields that
  \[
    \big\| H_m: \B(\ell^2 S) \to \ell^\infty(\Omega) \weaktensor \B(\ell^2 S) \big\|_\cb
    \, = \, 
    \max_{\omega \in \Omega} \Big\{ \big\| H_{m(\omega)}: \B(\ell^2 S) \to \B(\ell^2 S) \big\|_\cb \Big\}.
  \]
  For each $\omega \in \Omega$, there is a Hilbert space $H_\omega$ such that 
  \[
    m_{s,t} = \big\langle \Xi_s^{[\omega]}, \Sigma_t^{[\omega]} \big\rangle
  \]
  and satisfying that the complete norm of $H_{m(\omega)}$ is equal to $\| \Xi^{[\omega]} \| \, \| \Sigma^{[\omega]} \|$. You can construct $H$ as the direct sum of all $H_\omega$, ie $H = \ell^2(\Omega;\{H_\omega\}_{\omega \in \Omega})$, and the maps $\Xi: S \times \Omega \to H$ and $\Sigma: S \times \Omega \to H$ by $\Xi_s(\omega) = \Xi_s^{[\omega]} \in H_\omega \subset H$ and a similar formula for $\Sigma$. A trivial calculation yields the result. 
  
  The case in which $\Omega$ is diffuse, although not conceptually different, runs into technical difficulties due to measurability. Let us start noticing that, if $(m^\alpha)_\alpha$ is a bounded sequence of symbols and $H_{m^\alpha} \to H_{m}$ in the weak-$\ast$ topology, then $\lim_\alpha m_{s,t}^\alpha(\omega) = m_{s,t}(\omega)$ $\mu$-almost everywhere. Let $\Se$ be the $\sigma$-algebra of $(\Omega, \mu)$. Take a filtration of $\sigma$-algebras $\Se_\alpha \subset \Se_{\alpha + 1} \dots \subset \Se$ such that each of the sigma algebras $\Se_\alpha$ is given by the power set of a partition
  \[
    \Omega = \bigcup_{j \geq 0} \Omega_j^{\alpha}
  \] 
  into sets of finite measure. We will also assume that the union of all the partitions $\Se_\alpha$ generated $\Se$. As a consequence, we have that the subalgebras $L^\infty(\Omega; \Se_\alpha) \subset L^\infty(\Omega)$, are thus isomorphic to $\ell^\infty$ and the union of them is a weak-$\ast$ dense subalgebra of $L^\infty(\Omega)$. Let us denote by $\EE_\alpha: L^\infty(\Omega) \to L^\infty(\Omega; \Se_\alpha) \cong \ell^\infty$ the conditional expectation associated to the sub-$\sigma$-algebra $\Se_\alpha$. We will denote as well by $\EE_\alpha$ the tensor amplification $\EE_\alpha \otimes \Id: L^\infty(\Omega) \weaktensor \B(\ell^2 S) \to \ell^\infty \weaktensor \B(\ell^2 S)$. Clearly we have that $\EE_\alpha \circ H_{m}$ converge in the weak-$\ast$ topology to $H_m$. But it also holds that $\EE_\alpha \circ H_m  = H_{\EE_\alpha(m)}$ and therefore $\EE_\alpha(m_{s,t})$ converges almost everywhere to $m_{s,t}$
  \begin{center}~
    \xymatrix@C=1.5cm@R=1cm{
      \B(\ell^2 S) \ar[r]^-{H_m} & L^\infty(\Omega) \weaktensor \B(\ell^2 S) \ar[d]^{\EE_\alpha} \\
      \B(\ell^2 S) \ar[u]^{=} \ar[r]^-{H_{\EE_\alpha(m)}} & \ell^\infty \weaktensor \B(\ell^2 S).
    }
  \end{center}
  But now, by applying the discrete case, we have that each of the symbols $m^\alpha_{s,t} = \EE_\alpha(m_{s,t})$ has a decomposition as $\langle \Xi_s^{\alpha}(\omega), \Sigma_t^{\alpha}(\omega) \rangle$, where $\Xi$, $\Sigma$ are $\Se_\alpha$-measurable functions in $L^\infty(\Omega \times S; H_\alpha)$, for some Hilbert space $H_\alpha$ and they reach the norm of $H_m^\alpha$. We can take the ultraproduct Hilbert space $H = \prod_{\alpha, \U} H_\alpha$, for some proper ultrafilter $\U$, and notice that we can construct $\Xi$ and $\Sigma$ by identifying $L^\infty(\Omega \times S;H)$ with $\prod_{\alpha, \U} L^\infty(\Omega \times S;H_\alpha)$ so that $\Xi = (\Xi^\alpha)_\alpha^\U$ and $\Sigma = (\Sigma^\alpha)_\alpha^\U$. We have that
  \begin{eqnarray*}
    m_{s,t}(\omega)
      & = & \big\langle X_s(\omega), \Sigma_t(\omega) \big\rangle\\
      & = & \lim_{\alpha \to \U} \big\langle X_s^\alpha(\omega), \Sigma_t^\alpha(\omega) \big\rangle \\
      & = & \lim_{\alpha \to \U} m_{s,t}^\alpha(\omega) \, = \, m_{s,t}(\omega)
  \end{eqnarray*}
  where the last equality follows almost everywhere. The identity for the norms follows similarly. 
\end{proof}

\begin{remark} \normalfont
  \
  \begin{enumerate}[leftmargin=1.35cm, label={\rm \textbf{(R.\arabic*)}}, ref={\rm \textbf{(R.\arabic*)}}]
    \item Like in the case of Schur multipliers the complete boundedness of the $L^\infty(\Omega)$-valued multipliers is automatic, see \cite{BozejkoFendler1984HSFourier}. This can be obtained from the automatic complete boundedness results of \cite{Smith1991} after noting that the maps above are $L^\infty(\Omega) \weaktensor \ell^\infty(S)$-bimodular. Like in the case of Fourier multipliers, the decomposition above extends to equivariant operators $\Phi: \L \Gamma \to L^\infty(\Omega) \rtimes_\theta \Gamma$ only when the operator $\Phi$ us completely bounded, see the proof of Theorem \ref{thm:EquivariantMap} \ref{itm:EquivariantMap3}. 
  
    \item The proof above works verbatim to prove that the alternative $L^\infty(\Omega)$-valued Herz-Schur multiplier given by
    \[
      H_m \Big( \sum_{s,t \in S} f_{s,t}(\omega) \otimes e_{s,t} \Big)
      \, = \, 
      \sum_{s,t \in S} m_{s,t}(\omega) \, f_{s,t}(\omega) \otimes e_{s,t},
    \] 
    is bounded/completely bounded if and only if there exists a Hilbert space $H$ and $\Xi, \Sigma \in L^\infty(\Omega \times S;H)$ such that 
    \[
      m_{s,t}(\omega) = \big\langle \Xi_s(\omega), \Sigma_t(\omega) \big\rangle
    \]
    for almost every $\omega \in \Omega$. In that case, its norm is again given by
    \[
      \begin{split}
        \| H_m: L^\infty(\Omega) \weaktensor & \B(\ell^2) \to L^\infty(\Omega) \weaktensor \B(\ell^2) \| \\
          & = \inf \Big\{ \| \Xi \|_{L^\infty(\Omega;H)} \, \| \Sigma \|_{L^\infty(\Omega;H)} \Big\},
      \end{split}
    \]
    where the infimum is taken over all such decompositions.
        
    \item 
    There is an alternative route to prove the point above that, although much less direct, would give a far reaching result. 
    Such alternative route would be obtained from an intricate combinations of scattered results in the literature as follows. Given an hyperfinite von Neumann algebra $\M$, the results in \cite{ChatterjeeSinclair1992, ChatterjeeSmith1993} imply that the map from the, so called, central Haagerup tensor product $\M \otimes_{\Zent, h} \M$ into $\CB^\sigma(\M, \M)$ given by extension of $x \otimes y \mapsto \ell(x) r(y)$, where $\ell, r$ represent the left and right multiplication operators on $\M$ respectively, is isometric. Its image is point weak-$\ast$ dense inside the normal and decomposable maps of $\M$, which by \cite{Haagerup1985Dec} are all completely bounded normal maps. Extending the isometry from $\M \otimes_{\Zent, h} \M$ to its extended tensor product version, in the sense of \cite{EffRu2003, BleSmi1992} will make the map surjective. Given a sub-von Neumann algebra $\N \subset \M$, we would have to follows the steps of \cite{Smith1991} to construct an isomorphism between the normal $\N'$-$\N'$-bimodular maps $\CB^\sigma_{\N', \N'}(\M, \M)$ and a central version of the extended Haagerup tensor $\N \otimes_{\Zent, eh} \N$. Taking $\M = L^\infty(\Omega) \weaktensor \B(\ell^2 S)$ and $\N = L^\infty(\Omega) \weaktensor \ell^\infty(S)$ will give the result above.  
  \end{enumerate}
\end{remark}

We can proceed to prove the main Theorem of the section.

\begin{proof}[Proof (of Theorem \ref{thm:EquivariantMap})]
  For \ref{itm:EquivariantMap1}, the only if part is trivial. Indeed, it is trivial to see that if the map $\lambda_g \mapsto \varphi_g \rtimes \lambda_g$ is multiplicative, then $\varphi: \Gamma \to L^\infty(\Omega; \TT)$ is a multiplicative $1$-cocycle. To see that every multiplicative $1$-cocycle induces a normal $\ast$-homomorphism, we will use the following version of the Fell absorption principle. Let $W: L^2(\Omega) \otimes_2 \ell^2(\Gamma) \to L^2(\Omega) \otimes_2 \ell^2 \Gamma$, be the unitary given by extension of 
  \[
    \xi \otimes \delta_h \xmapsto{\quad W \quad} \theta_h^{-1}(\overline{\varphi}_h)\xi \otimes \delta_h.
  \]
  The following diagram commutes
  \begin{equation*}
    \label{dia:FellAbsorption}
    \xymatrix@C=2cm{
      L^2(\Omega) \otimes_2 \ell^2(\Gamma) \ar[r]^{W} \ar[d]^{\varphi_k \rtimes \lambda_k}
        & L^2(\Omega) \otimes_2 \ell^2(\Gamma) \ar[d]^{\1 \otimes \lambda_k} \\
      L^2(\Omega) \otimes_2 \ell^2(\Gamma) \ar[r]^{W} & L^2(\Omega) \otimes_2 \ell^2(\Gamma).
    }
  \end{equation*}
  Indeed, we have that
  \begin{eqnarray*}
    \big[ (\1 \otimes \lambda_k) \, W \big] (\xi \otimes \delta_h) 
      & = & (\1 \otimes \lambda_k) \, (\theta_{h^{-1}}(\overline{\varphi}_h) \xi \otimes \delta_h\\
      & = & \theta_{h^{-1}}(\overline{\varphi}_h) \xi \otimes \delta_{k \, h}
  \end{eqnarray*}
  while
  \begin{eqnarray*}
    \big[ W \, (\varphi_k \rtimes \lambda_k) \big](\xi \otimes \delta_h)
      & = & W \big[ \theta_{k h}^{-1}(\varphi_k) \xi \otimes \delta_{k h} \big]\\
      & = & \theta_{k \, h}^{-1}(\overline{\varphi}_{k \, h}) \theta_{k \, h}^{-1}(\varphi_k) \xi \otimes \delta_{k h} \quad = \quad \theta_{h}^{-1}(\overline{\varphi}_{h}) \xi \otimes \delta_{k h}
  \end{eqnarray*}
  Therefore $W$ spatially implements the homomorphism $\lambda_g \mapsto \varphi_g \rtimes \lambda_g$. 

  For \ref{itm:EquivariantMap2} the if side is very similar. Indeed,
  the map $\lambda_g \mapsto \kappa_g \rtimes \lambda_g$ extends to a normal $\ast$-homomorphism $\pi: \L \Gamma \to L^\infty(\Omega;\B(H)) \rtimes_{\theta \otimes \Id} \Gamma$. To see that, just notice that, again by a Fell like absorption principle, there is a unitary $W: H \otimes_2 L^2(\Omega) \otimes_2 \ell^2 \Gamma \to H \otimes_2 L^2(\Omega) \otimes_2 \ell^2 \Gamma$ such that the following diagram commutes
  \begin{equation}
    \xymatrix@C=2cm{
      H \otimes_2 L^2(\Omega) \otimes_2 \ell^2 \Gamma \ar[r]^{W} \ar[d]^{(\kappa_g \otimes \lambda_g)}
        & H \otimes_2 L^2(\Omega) \otimes_2 \ell^2 \Gamma \ar[d]^{(\1 \otimes \lambda_g)}\\
      H \otimes_2 L^2(\Omega) \otimes_2 \ell^2 \Gamma \ar[r]^{W}
        & H \otimes_2 L^2(\Omega) \otimes_2 \ell^2 \Gamma ,
    }
  \end{equation} 
  where, after identifying $H \otimes_2 L^2(\Omega)$ with $L^2(\Omega;H)$, $W$ is given by
  \[
    W ( \xi(\omega) \otimes \delta_h )
    \, = \,
    \kappa_{h^{-1}}(\omega) \, \xi(\omega) \otimes \delta_h.
  \]
  The map $\Phi$  of the form $\Phi(x) = V^\ast \pi(x) \, V$, where $V: L^2(\Omega) \otimes_2 \ell^2(\Omega) \to H \otimes_2 L^2(\Omega) \otimes_2 \ell^2(\Gamma)$ is given by $\eta \delta_k \mapsto \xi \otimes \eta \otimes \delta_k$, is completely positive by Stinespring's Theorem. 
  
  To see that any completely positive equivariant map is of this form we will use the theory of $W^\ast$-Hilbert modules \cite{Maunilov2005Hilbert, Lance1995}. Define the $L^\infty(\Omega)$-valued inner product over $\X_0 = L^\infty(\Omega) \algtensor \CC[\Gamma]$ by 
  \[
    \Big\langle \sum_{g \in \Gamma} F_g \otimes \delta_g, \sum_{k \in \Gamma} G_k \otimes \delta_k \Big\rangle_\Phi
    = \sum_{g \in \Gamma} \sum_{k \in G} \theta_{g^{-1}} \big( \overline{F}_g \, G_k \big) \, \varphi_{g^{-1} \, k}.
  \]
  Observe that $\langle \cdot, \cdot \rangle_\Phi$ is positive definite by Lemma \ref{lem:CP}. Let us denote by $\X$ the completion of $\X_0$ modulo its nulspace for the seminorms
  \[
    x \longmapsto \phi \big( \langle x, x \rangle^\frac12  \big),
  \]
  for every $\phi \in L^1(\Omega)$. Using \cite[Theorem 2.5.]{JungeSherman2005}, we have that $\X$ embeds as a complemented $L^\infty(\Omega)$-module of $L^\infty(\Omega;H)$ for some Hilbert space $H$. Observe also that every element $x_0 \in \CC \, \1_{\Omega} \algtensor \CC[\Gamma]$ extends to a constant vector $x = \1_\Omega \otimes \xi \in L^\infty(\Omega;H)$. For every $h \in \Gamma$, let $L_h$ be the operator given linear extension of $L_h(F \otimes \delta_k) = F \otimes \delta_{h \, k}$. It holds that
  \begin{equation}
    \label{eq:InvariantOperator}
    \big\langle L_h(x), L_h(y) \big\rangle_\Phi
    \, = \,
    \theta_{h} \langle x, y\rangle_\Phi.
  \end{equation}
  Observe that equation \eqref{eq:InvariantOperator} implies that $L_h$ extends to an isometric operator $\pi_h$ over $\X$ and to a unitary operator $\pi_h$ acting on the Hilbert space $L^2(\Omega;H)$ whose inner product is given by $\mu \circ \langle \cdot, \cdot \rangle_\Phi$. Equation \eqref{eq:InvariantOperator} also yields that, for every $\xi \in L^\infty(\Omega; H)$, $[\pi_h \xi](\omega) = \kappa_h(\omega) \xi(\theta_h^{-1}\omega)$, for some unitary $\kappa_h(\omega) \in \U(H)$. A straightforward computation gives that $\kappa$ is a unitary $1$-cocycle and that
  \[
    \varphi_g(\omega) = \langle \xi, \kappa_g(\omega) \, \xi \rangle,
  \]
  where $\xi \in H$ is the vector associated to the constant extension of $x_0 = \1 \otimes \delta_e$.
  
  For \ref{itm:EquivariantMap3} first we check that every $\varphi$ satisfying that $\varphi_{g^{-1} h} = \big\langle \theta_g^{-1} \Xi_g, \theta_g^{-1} \Sigma_h \big\rangle$ is completely bounded. For that, use that the embedding $\iota: L^\infty(\Omega) \rtimes_\theta G \into L^\infty(\Omega) \weaktensor \B(\ell^2 S)$ is given by
  \begin{equation}
    \iota \Big( \sum_{g \in \Gamma} f_g \rtimes \lambda_g \Big)
    \, = \,
    \sum_{g, k \in \Gamma} \theta_{g^{-1}}(f_{g \, h^{-1}}) \otimes e_{g, h}.
  \end{equation}
  We have that the following diagram commutes 
  \begin{equation}
    \xymatrix@C=1.25cm@R=1cm{
     \L \Gamma \ar[r]^{\iota} \ar[d]^{\Phi} 
        & \B(\ell^2 \Gamma) \ar[d]^{H_m}\\
      L^\infty(\Omega) \rtimes_\theta \Gamma \ar[r]^{\iota} 
        & L^\infty(\Omega) \weaktensor \B(\ell^2 \Gamma),
    }
  \end{equation}
  where $H_m$ is the $L^\infty(\Omega)$-valued Schur multiplier of symbol $m_{g, h}(\omega) = \theta_{g^{-1}}(\varphi_{g \, h^{-1}}) = \langle \Xi_g(\omega), \Sigma_h(\omega) \rangle$, which is completely bounded. 
  For the reciprocal we define the injective and normal $\ast$-homomorphism
  \[
    L^\infty(\Omega) \weaktensor \B(\ell^2 \Gamma) \xrightarrow{\quad \pi \quad} \big( L^\infty(\Omega) \rtimes_\theta \Gamma \big) \weaktensor \B(\ell^2 \Gamma)
  \]
  given by 
  \[
    \pi \Big( \sum_{g,h \in \Gamma} f_{g,h} \otimes e_{g,h} \Big)
    \, = \,
    \sum_{g,h \in \Gamma} \big( \theta_{g^{-1}}(f_{g,h}) \rtimes \lambda_{s^{-1} t} \big) \otimes e_{s,t}.
  \]
  In an abuse of notation we will also denote by $\pi$ the $\ast$-homomorphism given $\pi: \B(\ell^2 \Gamma) \to \L \Gamma \weaktensor \B(\ell^2 \Gamma)$ obtained when $L^\infty(\Omega) = \CC \1$.  
  Those maps satisfy the following intertwining identity
  \begin{center}~
    \xymatrix@C=1.5cm@R=1.25cm{
      \B(\ell^2 \Gamma) \ar[r]^-{\pi} \ar[d]^-{H_{\theta_g(\varphi_{g^{-1} h})}}
        & \L  \Gamma \weaktensor \B(\ell^2 \Gamma) \ar[d]^-{\Phi \otimes \Id}\\
      L^\infty(\Omega) \weaktensor \B(\ell^2 \Gamma) \ar[r]^-{\pi}
        & \big( L^\infty(\Omega) \rtimes_\theta \Gamma \big) \weaktensor \B(\ell^2 \Gamma).
    }
  \end{center}
  Observe that the symbol $\theta_g(\varphi_{g^{-1} h})$ induces a bounded multiplier if $\Phi \otimes \Id$ is bounded or, equivalently, if $\Phi$ is completely bounded. But, by Lemma \ref{lem:LinftyValuedSchurs}, if the $L^\infty(\Omega)$-valued Schur multiplier is bounded, it admits a decomposition
  \[
    \theta_g(\varphi_{g^{-1} h}) = \big\langle \Xi_{g}', \Sigma_h' \big\rangle
  \]
  and by setting $\Xi_g = \Xi_{g^{-1}}'$ and $\Sigma_g = \Sigma_{g^{-1}}'$ we obtain a decomposition like that of \eqref{eq:Decomposition}.
\end{proof}

\begin{remark} \normalfont
  \label{rmk:NecessityOfCB}
  In direct analogy with the case of Fourier multipliers, see \cite{BozejkoFendler1984HSFourier}, the complete boundedness of the equivariant map $\Phi:\L \Gamma \to L^\infty(\Omega) \rtimes_\theta \Gamma$ given by extension of $\lambda_g \mapsto \varphi_g \rtimes \lambda_g$ is only required to prove that the factorization \eqref{eq:Decomposition} is necessary. Quantitatively, we have that if $m_{g,h} = \theta_{g^{-1}}(\varphi_{g \, h^{-1}})$ and $H_m$ is its associated $L^\infty(\Omega)$-valued Schur multiplier, we have that 
  \begin{eqnarray}
    & & \big\| \Phi: \L \Gamma \to L^\infty(\Omega) \rtimes_\theta \Gamma \big\|
        \, \leq \, \big\| H_m: L^\infty(\Omega) \weaktensor \B(\ell^2 \Gamma) \to L^\infty(\Omega) \weaktensor \B(\ell^2 \Gamma) \big\| \label{eq:SchurToFourier}\\
    & & \big\| H_m: L^\infty(\Omega) \weaktensor \B(\ell^2 \Gamma) \to L^\infty(\Omega) \weaktensor \B(\ell^2 \Gamma) \big\| 
        \, \leq \, \big\| \Phi: \L \Gamma \to L^\infty(\Omega) \rtimes_\theta \Gamma \big\|_\cb, \nonumber
  \end{eqnarray}
  where equation \eqref{eq:SchurToFourier} works also if you use completely bounded norms in both sides of the inequality. The factorization \eqref{eq:Decomposition} follows by applying Lemma \ref{lem:LinftyValuedSchurs} to $m_{g,h} = \theta_{g^{-1}}(\varphi_{g \, h^{-1}})$.
\end{remark}

\section{Transference for actions with an invariant mean}

Our proof leans in a key way on the following technical lemma, which follows techniques of almost multiplicative maps developed in \cite[Theorem B/Corollary 2.3]{CasParPerrRic2014}, which generalize both pseudolocalization techniques for Fourier multipliers and the noncommutative Powers-St{\o}rmer's inequlity. Some consequences of those result are also obtained in \cite{Ricard2015Mazur}. We recall that the technniques use here are similar to those in \cite{Gon2018CP}. 

\begin{theorem}[{\cite[Theorem B/Corollaries 2.3/2.4.]{CasParPerrRic2014}}]
  \label{thm:almostMultiplicativemap}
  Let $(\M, \tau)$ be a semifinite von Neumann algebra with a normal, faithful and semifinite tracial weight and let $R: \M \to \M$ be a positive subunital map such that $\tau \circ T \leq \tau$ map, we have that
  \begin{enumerate}[leftmargin=1.25cm, label={\rm \textbf{(\roman*)}}, ref={\rm (\roman*)}]
    \item For every $\xi \in L^2(\M)_+$
    \[    
      \big\| R(\xi^\theta) - \xi^\theta \big\|_{\frac{2}{\theta}}
      \, \leq \,
      C \, \big\|  R(\xi) - \xi \big\|_{2}^{\frac{\theta}{2}} \, \| \xi \|_{2}^{\frac{\theta}{2}}.
    \]
    \item For every $\xi \in L^2(\M)$
    \[
      \big\| R(u \, |\xi|^\theta) - u \, |\xi|^\theta \big\|_{\frac{2}{\theta}}
      \, \leq \,
      C \, \big\|  R(\xi) - \xi \big\|_{2}^{\frac{\theta}{2}} \, \| \xi \|_{2}^{\frac{\theta}{2}},
    \]
    where $\xi = u \, |\xi|$ is the polar decomposition of $\xi$.
  \end{enumerate}
\end{theorem}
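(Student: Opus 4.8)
The plan is to prove the positive statement (i) as the core and to obtain (ii) from it by a dilation argument. For (ii), given the polar decomposition $\xi = u\,|\xi|$, I would pass to $M_2(\M)$ with its amplified trace and use the self-adjoint dilation
\[
  X \, = \, \begin{pmatrix} 0 & \xi \\ \xi^\ast & 0 \end{pmatrix},
  \qquad
  |X| \, = \, \begin{pmatrix} |\xi^\ast| & 0 \\ 0 & |\xi| \end{pmatrix},
\]
so that the odd functional calculus $\mathrm{sgn}(X)\,|X|^\theta$ carries $u\,|\xi|^\theta$ in its off-diagonal corner. Applying the positive case to $X_\pm = \tfrac12(|X| \pm X) \ge 0$ under the amplification $\Id_{M_2} \otimes R$ (positive, subunital and trace-reducing once $R$ is $2$-positive, as happens in the intended applications) and then compressing to the corner reduces everything to $\xi \ge 0$; the passage between $\|R\,|X| - |X|\|_2$ and $\|RX - X\|_2 \simeq \|R\xi - \xi\|_2$ uses the $L^2$-Lipschitz property of the absolute value together with the same almost-multiplicative control that appears in (i).

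For (i) I would first note that positivity together with $\tau \circ R \le \tau$ makes $R$ an $L^1$-contraction, while positivity and $R(\1) \le \1$ make it an $L^\infty$-contraction, so that $\|R\|_{L^2 \to L^2} \le 1$ by interpolation. Writing
\[
  R(\xi^\theta) - \xi^\theta
  \, = \,
  \big( R(\xi^\theta) - R(\xi)^\theta \big) + \big( R(\xi)^\theta - \xi^\theta \big),
\]
the second bracket is a pure comparison of fractional powers: the noncommutative Powers--St\o rmer inequality in its $\theta$-power form, $\|a^\theta - b^\theta\|_{2/\theta} \le \|a - b\|_2^\theta$ for $a, b \ge 0$, together with $\|R\xi - \xi\|_2 \le 2\,\|\xi\|_2$, bounds it by $2^{\theta/2}\,\|R\xi - \xi\|_2^{\theta/2}\,\|\xi\|_2^{\theta/2}$, already of the desired shape.

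The main obstacle is the first bracket, the \emph{almost-multiplicative defect}, which by operator concavity of $t \mapsto t^\theta$ and Jensen's operator inequality is the positive element $R(\xi)^\theta - R(\xi^\theta) \ge 0$. Using the integral representation
\[
  t^\theta \, = \, c_\theta \int_0^\infty \lambda^{\theta-1}\,\frac{t}{t+\lambda}\,d\lambda,
  \qquad 0 < \theta < 1,
\]
I would rewrite it (for unital $R$; the subunital correction $\1 - R(\1) \ge 0$ entering as a lower-order term) as
\[
  R(\xi)^\theta - R(\xi^\theta)
  \, = \, c_\theta \int_0^\infty \lambda^{\theta}\,\big[ R\big((\xi+\lambda)^{-1}\big) - (R\xi+\lambda)^{-1} \big]\,d\lambda,
\]
each integrand being positive because $t \mapsto (t+\lambda)^{-1}$ is operator convex. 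The $L^{2/\theta}$-norm of this positive element is then accessible by trace duality against a normalised $y \in L^{2/(2-\theta)}_+$; the resolvent identity linearises each integrand in $R\xi - \xi$, and a Cauchy--Schwarz split for $\tau$ distributes the bound into one factor governed by $\|R\xi - \xi\|_2$ and one by $\|\xi\|_2$, each to the power $\theta/2$ --- exactly the mechanism producing the geometric-mean exponent on the right-hand side. The delicate points are the uniform-in-$\lambda$ resolvent estimates guaranteeing convergence of the $\lambda$-integral and the bookkeeping of the subunital correction, both handled as in \cite[Theorem B, Corollaries 2.3/2.4]{CasParPerrRic2014}. Combining the two brackets yields (i) with a constant $C = C(\theta)$ depending only on $\theta$.
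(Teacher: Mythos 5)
This theorem is not proven in the paper at all: it is imported, citation and all, from \cite{CasParPerrRic2014} and used as a black box, so your attempt can only be judged against the original argument there and on its own terms. On its own terms it has genuine gaps at the two load-bearing points. For (i), the opening moves are fine: the split into a power-comparison term and the Jensen defect $R(\xi)^\theta - R(\xi^\theta) \ge 0$, with Birman--Koplienko--Solomyak handling the first bracket, is sound. But the proposed estimate of the defect term does not close. The resolvent identity linearises a difference of two resolvents, $(\xi+\lambda)^{-1} - (R\xi+\lambda)^{-1}$; your integrand is $R\bigl((\xi+\lambda)^{-1}\bigr) - (R\xi+\lambda)^{-1}$, and after adding and subtracting $(\xi+\lambda)^{-1}$ you are left with $R\bigl((\xi+\lambda)^{-1}\bigr) - (\xi+\lambda)^{-1}$, the defect of $R$ on a nonlinear function of $\xi$ --- exactly the kind of quantity the theorem is supposed to control. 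So the mechanism you describe (``the resolvent identity linearises each integrand in $R\xi - \xi$'') is circular at the decisive step. In addition, for subunital $R$ the correction you dismiss as lower order is $c_\theta\bigl(\int_0^\infty \lambda^{\theta-1}\, d\lambda\bigr)\,(\1 - R(\1))$, and that integral diverges at infinity, so the decomposition as written is meaningless; one must keep $t/(t+\lambda)$ intact inside $R$ (compare $f_\lambda(R\xi)$ with $R(f_\lambda(\xi))$ for $f_\lambda(t)=t/(t+\lambda)$), which is a genuinely different decomposition, and whose integrand-positivity also uses a subunital Jensen inequality that fails for $f_\lambda(0)>0$.

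For (ii), the algebra of the dilation is correct ($X_\pm$ positive with orthogonal supports, the off-diagonal corner of $\mathrm{sgn}(X)|X|^\theta$ being $u|\xi|^\theta$), but the reduction to (i) is again circular. Beyond needing $\Id_{M_2}\otimes R$ positive (2-positivity, a strengthening of the stated hypothesis, which you at least flag), the input to (i) is $\|(\Id\otimes R)(X_\pm) - X_\pm\|_2$, which contains $\|R(|\xi|)-|\xi|\|_2$ and $\|R(|\xi^\ast|)-|\xi^\ast|\|_2$; these are not hypotheses. The Araki--Yamagami $L^2$-Lipschitz bound controls $\||R\xi|-|\xi|\|_2$, not $\|R(|\xi|)-|R\xi|\|_2$, and the latter is once more an almost-multiplicativity defect: it is an instance of (i) applied to $\xi^\ast\xi \in L^1(\M)_+$ with exponent $1/2$, i.e.\ at the $L^1\to L^2$ scale, which lies outside the statement of (i) as formulated. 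Since you finally defer precisely ``the delicate points'' --- the uniform-in-$\lambda$ estimates and the subunital bookkeeping, which are the entire analytic content --- to \cite{CasParPerrRic2014}, the very result being proven, the proposal is a plausible-looking skeleton rather than a proof: the missing idea is an actual mechanism for bounding the defect of $R$ against the functional calculus, which is what the cited authors supply.
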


We also cite the following lemma, although it can be obtained from Theorem \ref{thm:almostMultiplicativemap} above by taking $R(\xi) = u \, \xi \, u^\ast$, for every unitary in the spectral calculus of $x$ and using ultraproduct techniques.
 
\begin{lemma}[{\cite[Lemma 2.4/ Lemma 2.6]{Ricard2015Mazur}}]
  \label{lem:Approximate}
  Let $\M$ be a von Neumann algebra and assume $\M$ is semifinite with a n.s.f. trace $\tau: \M_+ \to [0,\infty]$. Let $\xi \in L^2(\M)$ be a unit vector and $\xi = u \, |\xi|$ be its polar decomposition. Fix $1 \leq p \leq \infty$ and notice that $|\xi|^\frac{2}{p} \in L^p(\M)$. For every $x \in \M$ and $\varepsilon > 0$, there is a $\delta > 0$ such that
  \begin{enumerate}[leftmargin=1.25cm, label={\rm \textbf{(\roman*)}}, ref={\rm (\roman*)}]
    \item $\displaystyle{\big\| [x, \xi] \big\|_{2} < \delta} \quad \implies  \quad \displaystyle{\big\| \big[x, |\xi|^\frac{2}{p} \big] \big\|_{p} < \varepsilon}.$
    \item $\displaystyle{\big\| \big[ x, \xi \big] \big\|_{2} < \delta} \quad \implies  \quad \displaystyle{\big\| \big[x, u\, |\xi|^\frac{2}{p} \big] \big\|_{p} < \varepsilon}.$
  \end{enumerate}
\end{lemma}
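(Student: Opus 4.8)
The plan is to deduce the Lemma from Theorem~\ref{thm:almostMultiplicativemap} by feeding it the conjugation maps $R_w(\eta)=w\,\eta\,w^\ast$ attached to unitaries $w$ in the spectral calculus of $x$. Each such $R_w$ is a trace-preserving $\ast$-automorphism of $\M$, hence positive, subunital and $\tau$-preserving, so Theorem~\ref{thm:almostMultiplicativemap} applies with $\theta=2/p$ (so that $2/\theta=p$) for $p\geq 2$; the range $1\leq p<2$ is then recovered by the trace duality between $L^p(\M)$ and $L^{p'}(\M)$. The elementary identity $R_w(\eta)-\eta=[w,\eta]\,w^\ast$ gives $\|R_w(\eta)-\eta\|_q=\|[w,\eta]\|_q$ in every $L^q(\M)$, and this is the dictionary converting the almost-fixed-point estimates of Theorem~\ref{thm:almostMultiplicativemap} into commutator estimates.

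With this dictionary, part~(ii) is immediate from Theorem~\ref{thm:almostMultiplicativemap}(ii) applied to $\eta=u\,|\xi|^{2/p}$: since its right-hand side is controlled by $\|R_w(\xi)-\xi\|_2=\|[w,\xi]\|_2$, one obtains
\[
  \big\|[w,u\,|\xi|^{2/p}]\big\|_p
  \,\leq\,
  C\,\big\|[w,\xi]\big\|_2^{1/p}\,\|\xi\|_2^{1/p}.
\]
For part~(i) I apply Theorem~\ref{thm:almostMultiplicativemap}(i) to the positive vector $|\xi|\in L^2(\M)_+$; as $R_w$ is a $\ast$-automorphism, $R_w(|\xi|)=|R_w(\xi)|$, and the Lipschitz bound $\||a|-|b|\|_2\leq\sqrt2\,\|a-b\|_2$ for the absolute value on $L^2(\M)$ gives $\|R_w(|\xi|)-|\xi|\|_2\leq\sqrt2\,\|[w,\xi]\|_2$, whence
\[
  \big\|[w,|\xi|^{2/p}]\big\|_p
  \,\leq\,
  \sqrt2\,C\,\big\|[w,\xi]\big\|_2^{1/p}\,\|\xi\|_2^{1/p}.
\]
Here I read the statement with $x$ \emph{normal} — in particular for the unitaries $\lambda_g$ and the self-adjoint elements occurring in the transference argument — since for a genuinely non-normal $x$ the implication already fails at $\delta=0$ (a nilpotent $\xi$ commuting with an upper-triangular $x$ need not commute with $|\xi|$). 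For normal $x$, Fuglede--Putnam yields $[x,\xi]=0\Rightarrow[x^\ast,\xi]=0$, so $x$ commutes with $|\xi|^2$ and with every spectral unitary $w=f(x)$; quantitatively, the spectral theorem gives the exact $L^2$ bound $\|[f(x),\xi]\|_2\leq\|f\|_{\Lip}\,\|[x,\xi]\|_2$.

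The main obstacle is the reverse conversion, i.e.\ recovering $\|[x,\zeta]\|_p$ (for $\zeta=|\xi|^{2/p}$ or $u\,|\xi|^{2/p}$) from the control of $\|[w,\zeta]\|_p$ over spectral unitaries. The Hölder exponent $1/p<1$ in Theorem~\ref{thm:almostMultiplicativemap} forbids the naive route of differentiating $t\mapsto R_{e^{itx}}(\zeta)$ at $t=0$, as dividing by $t$ destroys the bound. I would circumvent this by an ultraproduct argument: assuming the Lemma fails, pick normal contractions $x_n$ with $\|[x_n,\xi]\|_2\to0$ but $\|[x_n,\zeta]\|_p\geq\varepsilon_0$, and form the normal element $x=(x_n)^\U$ in the ultrapower $\M^\U$. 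There the hypothesis becomes exact, $[x,\xi]=0$, so by the above $R_w(\xi)=\xi$ for all spectral unitaries $w$; the two displayed inequalities then force $R_w$ to fix $\zeta$ \emph{exactly}, so the path $t\mapsto R_{e^{itx}}(\zeta)$ is genuinely constant and differentiating it gives $[x,\zeta]=0$, contradicting $\|[x,\zeta]\|_p\geq\varepsilon_0$. Because the limiting path is an honest fixed point, no division by $t$ occurs and the exponent $1/p$ is harmless.

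Alternatively one argues quantitatively, approximating the identity function on the compact set $\mathrm{spec}(x)$ by a combination $q=\sum_k c_k\,w_k$ of spectral unitaries and estimating
\[
  \big\|[x,\zeta]\big\|_p
  \,\leq\,
  \sum_k |c_k|\,\big\|[w_k,\zeta]\big\|_p
  \,+\,2\,\|x-q(x)\|\,\|\zeta\|_p,
\]
where the finitely many frequencies contribute a $\delta$-independent constant times $\delta^{1/p}$, while the operator-norm error $2\|x-q(x)\|\,\|\zeta\|_p$ is arranged to be $<\varepsilon/2$ in advance (note $\|\zeta\|_p=\|\xi\|_2^{2/p}=1$). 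Either route produces the required $\delta$, and since the normal case covers both the self-adjoint and the unitary elements used later, this completes the proof.
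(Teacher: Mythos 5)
Your route is the one the paper itself sketches: the paper gives no detailed proof of this lemma, it cites Ricard and remarks only that the statement ``can be obtained from Theorem \ref{thm:almostMultiplicativemap} by taking $R(\xi)=u\,\xi\,u^\ast$ for every unitary in the spectral calculus of $x$ and using ultraproduct techniques.'' The core of your elaboration of that recipe is sound: the dictionary $R_w(\eta)-\eta=[w,\eta]\,w^\ast$, the two displayed estimates for unitary $w$ (part (i) via $R_w(|\xi|)=|R_w(\xi)|$ and the Araki--Yamagami bound), and the observation that for non-normal $x$ the statement genuinely fails (your nilpotent example is correct) are all fine. Since every application in the paper takes $x$ to be a unitary $\lambda_g$, $\pi(\lambda_h)$, or a finite linear combination of such, reading the lemma for normal $x$ costs nothing.

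Two steps, however, do not survive scrutiny. (1) \emph{Quantifiers in the ultraproduct step.} You negate the statement by fixing $\xi$ and letting normal contractions $x_n$ vary; that addresses uniformity of $\delta$ in $x$, which the lemma never claims ($\delta$ is allowed to depend on $x$). What the paper actually needs --- at \eqref{eq:central}, in Theorem \ref{thm:Extrpolation}, and at \eqref{eq:PositiveSequence} --- is uniformity of $\delta$ in the unit vector $\xi$ for fixed $x$, because the lemma is applied along nets $\xi_\alpha$. Your ultraproduct argument does not deliver that (and is in any case redundant: once $[x,\xi]=0$ with $x$ normal, Fuglede--Putnam plus functional calculus already gives $[x,|\xi|^{2/p}]=0$, with no need to differentiate $t\mapsto R_{e^{itx}}(\zeta)$). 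Your second, quantitative route does deliver it: in $\|[x,\zeta]\|_p\le\sum_k|c_k|\,\|[w_k,\zeta]\|_p+2\|x-q(x)\|\,\|\zeta\|_p$ all data ($c_k$, the Lipschitz constants $\|f_k\|_{\Lip}$, the universal $C$) depend only on $x,\varepsilon,p$ and not on $\xi$, and the $L^2$-Lipschitz bound $\|[f(x),\xi]\|_2\le\|f\|_{\Lip}\|[x,\xi]\|_2$ is indeed correct for normal $x$ (spectral theorem for the commuting normal pair of left and right multiplications by $x$ acting on the Hilbert space $L^2(\M)$). So that route should be the proof, and the ultraproduct route should be dropped. (2) \emph{The range $1\le p<2$.} As you note, Theorem \ref{thm:almostMultiplicativemap} is used with $\theta=2/p\le1$, so it only covers $p\ge2$, and ``trace duality'' does not recover the rest: duality turns $\|[w,\zeta]\|_p$ into $\sup\{|\tau(\zeta\,[y,w])|:\|y\|_{p'}\le1\}$, and nothing controls commutators of $w$ with arbitrary elements of $L^{p'}$. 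The standard repair is Leibniz plus factorization: with $\frac1r=\frac1p-\frac12$ write $u|\xi|^{2/p}=\xi\,|\xi|^{2/r}$, so that
\[
  \big\|[w,u|\xi|^{2/p}]\big\|_p
  \,\le\, \big\|[w,\xi]\big\|_2\,\big\||\xi|^{2/r}\big\|_r+\|\xi\|_2\,\big\|[w,|\xi|^{2/r}]\big\|_r
  \,\le\, \big\|[w,\xi]\big\|_2+C\,\big\|[w,\xi]\big\|_2^{2/r},
\]
and similarly $|\xi|^{2/p}=|\xi|\,|\xi|^{2/r}$ for part (i); this reduces $p<2$ to the exponents $2$ and $r\ge2$ that you already control. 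With these two corrections your proposal becomes a complete proof along the lines the paper intends.
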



With that lemma at hand we can prove the following theorem.

\begin{proof}[Proof (of Theorem \ref{thm:EquivariantMap})]
  Let $\iota: \L \Gamma \into L^\infty(\Omega) \rtimes_\theta \Gamma$ be the natural embedding that sends $\lambda_g$ to $\1 \rtimes \lambda_g$. Observe that if $\mu(\Omega) = 1$ we are already finishes since $\iota$ is trace preserving, ie $\tau_\rtimes \circ \iota = \tau$, and thus $\iota$ extends to all $L^p$-space. In the case of $\mu(\Omega) = \infty$ we have that, by the fact that $L^\infty(\Omega)$ admits a left-invariant mean $m:L^\infty(\Omega) \to \CC$, there is a unit net of vectors $\xi_\alpha \in L^2(\Omega)$ such that, for every $g \in \Gamma$,
  \[ 
    \big\| \theta_g(\xi_\alpha) - \xi_\alpha \big\|_2 \to 0.
  \]
  The vectors also satisfy that $m$ is any weak-$\ast$ accumulation point of the states $f \mapsto \langle \xi_\alpha, f x_\alpha\rangle$. 
  Let us define $J_p^\alpha: L^p(\L \Gamma) \to L^p(\Omega \rtimes_\theta \Gamma)$ by 
  \[
    J_p^\alpha(x) = u_\alpha \, |\xi_\alpha|^\frac{1}{p} \, \iota(x) \, |\xi_\alpha|^\frac{1}{p},
  \]
  where $x_\alpha = u \, |\xi_n|$ is the polar decomposition and we are identifying $\xi_\alpha \in L^2(\Omega)$ with $\xi_\alpha \rtimes \lambda_e \in L^2(\Omega \rtimes_\theta \Gamma)$. 
  We have that
  \begin{enumerate}[leftmargin=1.25cm, label={\rm \textbf{(\arabic*)}}, ref={\rm \textbf{(\arabic*)}}]
    \item \label{itm:IsometricInclusionProof.1}
    $\displaystyle{\big\| J_p^\alpha: L^p(\L \Gamma) \to L^p(\Omega \rtimes_\theta \Gamma) \big\| \leq 1}.$
    \item \label{itm:IsometricInclusionProof.2}
    $\displaystyle{\lim_{\alpha} \big\langle J_p^\alpha(x), J_p^\alpha(y) \big\rangle = \langle x, y \rangle}$ for every $x \in L^p(\L \Gamma)$ and $y \in L^q(\L \Gamma)$ with $\frac1{p} + \frac1{q} = 1$.
  \end{enumerate}
  The proofs of both points are straightforward generalizations of the results in \cite{NeuRic2011,CasSall2015,Gon2018CP}. Indeed, \ref{itm:IsometricInclusionProof.1} is obvious for
  $p = \infty$. By interpolation it suffices to be proven in the case of $p=1$. Let $x \in L^1(\L \Gamma)$ an element of unit norm. We can take $y$, $z \in \CC[\Gamma]$ such that $\| z - y , z \|_1 \leq \epsilon$, for every $\epsilon > 0$. To see that just notice that there are elements $y'$, $z' \in L^2(\L \Gamma)$ of unit norm and such that $x = y \, z$. By density, there are elements $y$ and $z$ in $\CC[\Gamma]$ such that $\| z - z' \|_2 \leq \delta$ and $\| y - y' \|_2 \leq \delta$. A calculation yields that
  \[
    \| x - y \, z \|_2  \leq \| y \, (z - z')\|_1 + \| (y - y') \, z' \|_1 \leq 2 \delta + \delta^2,
  \]
  and so, taking $\delta$ small enough gives the desired identity. Now, we have that
  \begin{eqnarray*}
    \big\| J_p^\alpha(x) \big\|_1
      & =    & \big\| J_p^\alpha( y \, z) \big\|_1 + \big\| J_p^\alpha( x - y \, z ) \big\|_1\\
      & =    & \big\| u_\alpha \, |\xi_\alpha| \, \iota(y \, z) \, |\xi_\alpha| \big\|_1 
               + \big\| u_\alpha \, |\xi_\alpha| \, \iota( x - y \, z ) \, |\xi_\alpha| \big\|_1
               \, = \, \mathrm{I} + \mathrm{II}
  \end{eqnarray*}
  The term $\mathrm{II}$ can be bounded by
  \[
    \big\| u_\alpha \, |\xi_\alpha| \, \iota( x - y \, z ) \, |\xi_\alpha| \big\|_1
    \leq
    \big\| u_\alpha \, |\xi_\alpha| \big|_2 \, \big\| \iota( x - y \, z ) \big\|_\infty \, \big\| |\xi_\alpha| \big\|_2
    \leq 
    \big\|  x - y \, z \big\|_1
    \leq \epsilon.
  \]
  While for $\mathrm{II}$, we have that
  \[
    \mathrm{II}
    \leq
    \big\| u_\alpha \, |\xi_\alpha| \, \iota(y) \big\|_2 \, \big\| \, \iota(z) \, |\xi_\alpha| \big\|_2
    = 
    A \, B.
  \]
  Both terms, $A$ and $B$ are estimated similarly. Let $y$ be the finite sum $\sum_{g \in \Gamma} y_g \lambda_g$. We have that $A$ is bounded by
  \begin{eqnarray*}
    A^2 \, = \, 
    \big\| u_\alpha \, |\xi_\alpha| \, \iota(y) \big\|_2^2
      & = & \big\| |\xi_\alpha| \, \iota(y) \big\|_2^2\\
      & = & \tau_\rtimes \bigg\{ 
            \Big( \sum_{g \in \Gamma} \overline{y}_g \, \1_\Omega \rtimes \lambda_{g^{-1}} \Big) 
            \, |\xi_\alpha|^2 \,
            \Big( \sum_{g \in \Gamma} y_h \, \1_\Omega \rtimes \lambda_{h} \Big)
            \bigg\}\\
      & = & \sum_{g \in \Gamma} \overline{y}_g \, y_g \, \int_{\Omega} \theta_{g^{-1}}(|\xi_\alpha|^2) \, d \mu
            \, = \, \| y \|_2^2 \, \leq \, (1 + \epsilon)^2.
  \end{eqnarray*}
  A similar estimate holds for $B$. Joining both and using that $\epsilon$ is arbitrary gives \ref{itm:IsometricInclusionProof.1}. 
  For \ref{itm:IsometricInclusionProof.2}, we start choosing $x,y \in \CC[\Gamma]$, given by $x = \sum_{g \in \Gamma} x_g \, \lambda_g$ 
  and $y = \sum_{h \in \Gamma} y_h \, \lambda_h$ and notice that 
  \begin{eqnarray}
    \lim_{\alpha} \big\langle J_p^\alpha(x), J_q^\alpha(x) \big\rangle
      & = & \lim_{\alpha} \tau_\rtimes 
            \big\{ 
              |\xi_\alpha|^\frac1{p} \, \iota(x^\ast) \, |\xi_\alpha|^\frac1{p} \, u_\alpha^\ast 
              u_\alpha \, |\xi_\alpha|^\frac1{q} \, \iota(y) \, |\xi|^\frac1{q}
            \big\} \label{eq:central}\\
      & = & \lim_{\alpha} \tau_\rtimes \big\{ \xi_\alpha^\ast \, \iota(x^\ast \, y) \, \xi \big\} \nonumber\\
      & = & \lim_{\alpha} \tau_\rtimes \Big\{ \sum_{g, h \in \Gamma} \overline{x}_g \, y_{h} \, \xi_\alpha \, \theta_{g^{-1} h}(\xi_\alpha) \rtimes \lambda_{g^{-1} h} \Big\} \nonumber\\
      & = & \lim_{\alpha} \sum_{g \in \Gamma} \overline{x}_g \, y_{g} \int_{\Omega} |\xi_\alpha(\omega)|^2 \, d \mu(\omega) \quad = \quad \langle x, y \rangle, \nonumber
  \end{eqnarray}
  where we have used Lemma \ref{lem:Approximate} in \eqref{eq:central}. The identity above can be extended to elements not in $\CC[\Gamma]$. Indeed, fix $y \in \CC[\Gamma]$ and choose $x \in L^p(\L \Gamma)$, where $p < \infty$. Approximating $x$ in the $L^p$-norm by elements in $\CC[\Gamma]$ gives that the inequality duality bracket extends to $L^p(\L \Gamma) \times \CC[\Gamma]$, but now, approximating $y$ by elements in $\CC[\Gamma]$ in the $L^q$-norm, or in the weak-$\ast$ topology if $q = \infty$ gives that the bracket above is well defined for pairs in $L^p(\L \Gamma) \times L^q(\L \Gamma)$. Now, \ref{itm:IsometricInclusionProof.1} and \ref{itm:IsometricInclusionProof.2} imply that the ultraproduct $J_p = J_p^\U$, given by
  \[
    J_p(x) = \big( J_p^\alpha(x) \big)_\alpha + n_{\U} \in \prod_{\U} L^p(\Omega \rtimes_\theta \Gamma),
  \]
  is an isometry. Indeed, that it is a contraction follows from \ref{itm:IsometricInclusionProof.1}. For the isometry, we notice that the dual of $L^p(\Omega \rtimes_\theta \Gamma)^\U$ contains $L^q(\Omega \rtimes \Gamma)^\U$ isometrically. Therefore
  \begin{eqnarray*}
    \big\| j_p(x) \big\|_p
      & = & \sup_{\phi \in \Ball[(L^p(\Omega \rtimes \Gamma)^U)^\ast]} \big\langle x, y \big\rangle\\
      & \geq & \sup_{y \in \Ball[L^q]} \big\langle j_p(x), j_p(y) \big\rangle \quad = \quad \sup_{y \in \Ball[L^q]} \big\langle x, y \big\rangle \quad = \quad \| x \|_p
  \end{eqnarray*}
  The fact that $J_p$ intertwines $T_m$ and $\Id \rtimes T_m$ and the $\L \Gamma$-modularity follow immediately. 
\end{proof}

\begin{remark} \normalfont
  \label{rmk:Several}
  Several remarks are in order.
  \begin{enumerate}[leftmargin=1.35cm, label={\rm \textbf{(R.\arabic*)}}, ref={\rm \textbf{(R.\arabic*)}}]
     \item 
     First, in the definition of $J_p^\alpha$ the choice of putting half a power of $|\xi_\alpha|$ on the right and half on the left is arbitrary. We could have chosen instead any map of the form 
     \[
       J_p^\alpha(x) = u_\alpha \, |x_\alpha|^\frac{2 \, \theta}{p} \iota(x) |x_\alpha|^\frac{2 \, (1- \theta)}{p},
     \]
     where $0 \leq \theta \leq 1$, and the proof would work similarly. Indeed, all such map become independent of $\theta$ after taking the ultrapower by the asymptotic centrality of the vectors $|\xi_\alpha|^\beta \in L^\frac{2}{\beta}(\M)$ for any $\beta \in (0,2]$. The same follows for the choice of putting $u_\alpha$ at the left or at the right.
     \item 
     The statement of Theorem \ref{thm:IsometricInclusion} covers only the case of crossed products of an action on a measure space. But a similar result holds in the case of of trace preserving actions $\theta: \Gamma \to \Aut(\M, \varphi)$, where $\M$ is a semifinite von Neumann algebra and $\varphi$ is a n.s.f trace. In that setting the following two conditions are equivalent 
     \begin{itemize}
       \item There is a (non necessarily normal) state $\varpi: \M \to \CC$ that is $\theta$-invariant.
       \item There is a net of unit vectors $\xi_\alpha \in L^2(\M)$ such that $\big\| \theta_g(\xi_\alpha)  -\xi_\alpha \big\|_2 \to 0$.
     \end{itemize}
     Imposing that condition on the action gives that the maps $J_p^\alpha(x) = u_\alpha \, |\xi_\alpha|^\frac1{p} \, \iota(x) \, |\xi_\alpha|^\frac1{p}$, yield, after passing to an ultrapower, a complete isometry $J_p: L^p(\L \Gamma) \to L^p(\M \times_\theta \Gamma)^\U$. That isometry is also $\L \Gamma$-bimodular and intertwines the operators $T_m$ and $\Id \rtimes T_m$.
  \end{enumerate}
\end{remark}

In \cite{Gon2018CP} it was proved that, when the action $\theta: \Gamma \to \Aut(\Omega,\mu)$ is $\mu$-preserving and amenable in the sense of Zimmer, see \cite[Section 4.3]{BroO2008}, \cite{Zimmer1984} or the original sources of \cite{Zimmer1977, Zimmer1978Pairs, Zimmer1978, AdamEllGior1994}, then the natural embedding $L^\infty(\Omega) \rtimes_\theta \Gamma \into L^\infty(\Omega) \weaktensor \B(\ell^2 \Gamma)$ gives rise to a complete $L^p$-isometry
\[
  L^p( \Omega \rtimes_\theta \Gamma ) \xrightarrow{ \quad J_p \quad } \prod_\U L^p\big(\Omega; S^p[\ell^2 \Gamma]\big) 
\]
that intertwines the maps $\Id \rtimes T_m$ and $\Id \otimes H_m$, where $H_m$ is the Herz-Schur multiplier associated to $m \in \ell^\infty(\Gamma)$. Therefore, we have that
\begin{equation}
  \label{eq:FordwardTransference}
  \begin{split}
    \big\| \Id \rtimes & T_m: L^p(\Omega \rtimes_\theta \Gamma) \to L^p(\Omega \rtimes_\theta \Gamma) \big\|_\cb\\
      & \leq \big\| \Id \otimes H_m: L^p(\Omega; S^p[\ell^2 \Gamma]) \to L^p(\Omega; S^p[\ell^2 \Gamma]) \big\|\\
      & \leq \big\| H_m: S^p[\ell^2 \Gamma] \to S^p[\ell^2 \Gamma] \big\|.
  \end{split}
\end{equation}
Observe that, although if a group $\Gamma$ is Zimmer-amenable and has a probability measure preserving action then $\Gamma$ itself is amenable, there are plenty of Zimmer-amenable actions that preserve semifinite measures. For instance, if $\Gamma$ is a countable exact group, then it admits an topological action $\theta: \Gamma \to \Aut(X)$ on a compact space $X$ that is Zimmer amenable, see \cite{Ozawa2000Exact}. Take a probability measure $\mu \in \PP(X)$ with total support and such that the action $\theta$ is nonsingualr, ie such that for every $g \in \Gamma$, $\mu$ and $\theta_g^\ast \mu$ are mutually absolutely continuous. Then, the Maharam extension of $\theta$, see \cite{Maharam1953Integrals}, is both Zimmer-amenable and measure preserving. Recall that the Maharam extension of $\theta$ is given by $\hat{\theta}:\Gamma \to \Aut(X \times \RR, \mu \otimes \nu)$, where $d \nu(t) = e^{t} d t$ and the action is defined as
\[
  (x, s) \mapsto (\theta_g \, x, s + \log \omega_g(x)),
\]
where $\omega_g: X \to \RR$ is the Radon-Nykodim derivative of $\theta_g^\ast \mu$ with respect to $\mu$. Therefore each exact discrete group admits a measure preserving Zimmer-amenable action.
If the lower bound \eqref{eq:lowerTransferenceBnd} holds for a particular Zimmer-amenable action $\theta: \Gamma \to \Aut(X)$ preserving a $\sigma$-finite measure, then we will have that 
\begin{eqnarray*}
  \big\| T_m: L^p(\L \Gamma) \to L^p(\L \Gamma) \big\| 
    & \leq & \big\| \Id \rtimes T_m: L^p(\Omega \rtimes_\theta \Gamma) \to L^p(\Omega \rtimes_\theta \Gamma) \big\|\\
    & \leq & \big\| H_m: S^p[\ell^2 \Gamma] \to S^p[\ell^2 \Gamma] \big\|
\end{eqnarray*}
and the same would follow from complete norms, thus solving the open problem. Sadly, the conditions of being Zimmer amenable and having a $\Gamma$-invariant mean $m:L^\infty(\Omega) \to \CC$ give amenability for $\Gamma$ when they hold simultaneously. In summary, the situation that we obtain is the following
\begin{itemize}
  \item The embedding $\L \Gamma \into L^\infty(\Omega) \rtimes_\theta \Gamma$ yields an $L^p$-isometry $L^p(\L \Gamma) \to L^p(\Omega \rtimes_\theta \Gamma)^\U$ into the ultrapower that intertwines $T_m$ and $(\Id \rtimes T_m)^\U$ and therefore implies \eqref{eq:lowerTransferenceBnd} when $\Omega$ has a $\Gamma$-invariant mean.
  \item The embedding $L^\infty(\Omega) \rtimes_\theta \Gamma \to L^\infty(\Omega) \weaktensor \B(\ell^2 \Gamma)$ yields an $L^p$-isometry $L^p(\Omega \rtimes_\theta \Gamma) \to L^p(\Omega; S^p)^\U$ into the ultrapower that intertwines $\Id \rtimes T_m$ and $(\Id \rtimes H_m)^\U$ and therefore implies \eqref{eq:FordwardTransference} when $\theta: \Gamma \to \Aut(\Omega)$ is Zimmer amenable.
\end{itemize}
The intersection of both conditions only contains actions of amenable groups. It is open whether the conditions above are necessary for transference. Therefore the following problems remain widely open.

\begin{problem} \normalfont
  \label{prb:TrnsferenceActions}
  Let $\theta: \Gamma \to \Aut(\Omega, \mu)$ be a measure-preserving action on a $\sigma$-finite measure space and let $A(\Gamma)$ be the Fourier algebra of $\Gamma$.
  \begin{enumerate}[leftmargin=1.25cm, ref={\bf (P.\arabic*)}, label={\bf (P.\arabic*)}]
    \item \label{itm:TrnsferenceActions.1}
    Which actions satisfy that
    \[
      \big\| (\Id \rtimes T_m): L^p(\Omega \rtimes_\theta \Gamma \to L^p(\Omega \rtimes_\theta \Gamma) \big\|
      \, \leq \, 
      \big\| T_m: L^p(\L \Gamma) \to L^p(\L \Gamma) \big\|,
    \]
    for every $m \in A(\Gamma)$?
    \item \label{itm:TrnsferenceActions.2}
    Which actions satisfy that
    \[
      \big\| T_m: L^p(\L \Gamma) \to L^p(\L \Gamma) \big\| 
      \, \leq \, 
      \big\| (\Id \rtimes T_m): L^p(\Omega \rtimes_\theta \Gamma \to L^p(\Omega \rtimes_\theta \Gamma) \big\|,      
    \]
    for every $m \in A(\Gamma)$?
  \end{enumerate}
\end{problem}

Problem \ref{itm:TrnsferenceActions.1} has a positive solution when $\theta$ is Zimmer amenable, while Problem \ref{itm:TrnsferenceActions.2} has a positive solution when $\Omega$ has a $\Gamma$-invariant mean. For actions beyond those cases neither examples nor counterexamples are known.

\textbf{A correction on \cite{Gon2018CP}.}
  In the first paragraph after \cite[Corollary 3.2.]{Gon2018CP} the equality of the norms of $T_m$ and $\Id \rtimes T_m$ is stated for Zimmer-amenable actions. This is a clearly mistaken statement that did not appear in the ArXiv preprint of that same paper and was added during the process of providing more context to the results therein. The question that that paragraph was intending to address is whether the following identity holds
  \[
    \big\| \Id \rtimes T_m: L^p(\N \rtimes_\theta \Gamma) \to L^p(\N \rtimes_\theta \Gamma) \big\|
    = 
    \big\| \Id \rtimes T_m: L^p(\N \rtimes_\theta \Gamma) \to L^p(\N \rtimes_\theta \Gamma) \big\|_\cb
  \]
  where $\theta: \Gamma \to \Aut(\N, \tau)$ is any $\tau$-preserving action on a semifinite von Neumann algebra $\N$, provided that the algebra $\N$ is large enough in some sense. For instance, notice that if $\N$ absorbs any finite matrix $M_k(\CC)$, ie $M_k(\CC) \otimes \N \cong \N$, and $\theta$ is unitarily equivalent to $\Id \otimes \theta$, then identity above is automatic. That sort of equivariant decomposition follow for actions on amenable groups on finite McDuff factors by the work of Ocneanu \cite{Ocneanu1985}, that generalized to earlier results of Connes \cite{Connes1975Outer,Connes1977Periodic}.
  
  Indeed, let us denote by $\R$ the hyperfinite $\mathrm{II}_1$-factor. An von Neumann algebra $\N$ is said to be McDuff iff $\R \weaktensor \N \cong \N$. Recall similarly that, given two actions $\theta_1$ and $\beta$ of $\Gamma$ on a von Neumann algebra $\N$, they are said to be \emph{outer conjugate} iff there exists a normal $\ast$-isomorphism $\phi: \N \to N$ and a function $u: G \to \U(\N)$ such that 
  \[
    u_g \, \theta_g(x) \, u_g = \big( \phi \circ \beta_g \circ \phi^{-1} \big)(x),
  \] 
  for every $x \in \N$ and $g \in \Gamma$. Observe that the equation above forces $u$ to be a multiplicative $1$-cocycle. The equation above can easily be reinterpreted as saying that the group homomorphisms $\bar{\theta}$, $\bar{\beta}: \Gamma \to \Aut(\N)/\Inn(\N)$ are conjugate by the automorphism $\phi$. Whenever two actions are outerly conjugate, we have that the map
  \begin{equation}
    \label{eq:EquivariantIso}
    \sum_{g \in \Gamma} x_g \rtimes \lambda_g \longmapsto \sum_{g \in \Gamma} u_g \, \phi(x_g) \rtimes \lambda_g 
  \end{equation}
  is an equivariant and normal $\ast$-isomorphism of $\M \rtimes_\theta \Gamma$, Furthermore, if $\tau$ is a trace on $\M$, the automorphism is $\tau_\rtimes = (\tau \circ \EE)$-preserving iff $\phi$ is. The reciprocal is also true and those maps are all equivariant $\ast$-automorphisms. 
  
We also recall the following definitions related to action on von Neumann algebras, the interested reader can look in \cite{Ocneanu1985} and references therein for more information. 
\begin{itemize}
  \item 
  The action $\theta: \Gamma \to \Aut(\N)$ is \emph{free} iff for every $g \in \Gamma \setminus \{e\}$, we have that $\theta_g$ doesn't have nontrivial fixed points. Observe that, if $\theta$ is free then $\bar{\theta}: \Gamma \to \Out(\N)$ is faithful, otherwise if $\theta_g(x)$ were inner, then there will be a unitary $u \in \U(\N)$ such that $\theta_g(x) = u \, x \, u^\ast$ and $\theta_g(u) = u$. 
  \item 
  Let $(x_n)_n \in \ell^\infty[\N]$ be a centralizing sequence, ie a sequence such that for every $\psi \in \N_\ast$
  \[
    \lim_{n} \big\| [x_n, \psi] \big\|_\ast = 0.
  \]
  The action $\theta$ is \emph{centrally trivial} iff for every centralizing sequence $(x_n)_n$ we have that $\theta_g(x_n) - x_n \to 0$ 
  in the weak-$\ast$ topology for every $g \in \Gamma \setminus \{e\}$. 
  \item The action $\theta$ is \emph{strongly centrally trivial} iff for every $\Gamma$-invariant central projection $p \in \Prj(\N)$
  we have that $\theta{|}_{p \M p}$ is centrally trivial.
\end{itemize}

Notice that the last two definitions above collapse if $\N$ is a factor. We also have that the notion of being centrally trivial can be interpreted as saying that the ultraproduct action $\theta_g$ acts trivially over the relative commutant $\N_\U \cap \N'$. We have the following theorem.
  
\begin{theorem}[{\cite[Section 1.2]{Ocneanu1985}}]
  \label{thm:McDuffouterconjugate}
  Let $\M$ be a McDuff von Neumann algebra, $\Gamma$ an amenable group and $\theta: \Gamma \to \Aut(\M)$ a centrally free action, then $\theta$ and $\theta \otimes \Id_\R: \Gamma \to \Aut(\M \weaktensor \R)$ are outer conjugate.
\end{theorem}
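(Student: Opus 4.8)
The plan is to reduce the absorption statement to a Rokhlin-lemma-plus-cohomology-vanishing argument carried out in the central sequence algebra $\M_\U \cap \M'$, and then to promote a single trivial $\R$-factor into the full statement by the stability trick $\R \weaktensor \R \cong \R$. First I would record the preliminary reductions. Since $\M$ is McDuff we may fix an identification $\M \cong \M \weaktensor \R$, so it is enough to produce a unitary $1$-cocycle $u: \Gamma \to \U(\M)$ and a tensor splitting $\M \cong \M_0 \weaktensor \R$ under which the perturbed action $\theta^u_g := \Ad(u_g) \circ \theta_g$ takes the product form $\theta^u = \beta \otimes \Id_\R$ with $\beta := \theta^u{|}_{\M_0}$. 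Granting this, the theorem follows: since $\theta^u$ is outer conjugate to $\theta$ and outer conjugacy is preserved under tensoring with $\Id_\R$, we get $\theta \otimes \Id_\R \sim (\beta \otimes \Id_\R) \otimes \Id_\R = \beta \otimes \Id_{\R \weaktensor \R} \cong \beta \otimes \Id_\R = \theta^u \sim \theta$. The essential translation of the hypothesis, already recorded before the statement, is that central freeness of $\theta$ means exactly that the induced action $\theta_\U$ on the relative commutant $\M_\U \cap \M'$ is free, i.e. has no nonzero fixed points for $g \neq e$.

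Second, the engine is a noncommutative Rokhlin lemma for amenable groups. Using a Følner exhaustion of $\Gamma$ together with the freeness of $\theta_\U$ on $\M_\U \cap \M'$, I would produce, for each finite $F \subset \Gamma$ and each $\varepsilon > 0$, a partition of unity of projections $(e_g)_{g \in F}$ in $\M_\U \cap \M'$ that is approximately permuted by the action, $\theta_h(e_g) \approx e_{hg}$ in $\|\cdot\|_2$ for $hg \in F$. This is the technical heart of the theory and is exactly where amenability, through the paving structure of $\Gamma$ rather than the single-transformation Rokhlin lemma, and central freeness are both used in an essential way; I expect this step to be the main obstacle, since controlling the overlaps of translated Følner sets and transporting the abstract freeness of $\theta_\U$ into an honest approximately equivariant tower requires the full strength of Ocneanu's paving machinery.

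Third, I would feed the Rokhlin towers into a cohomology-vanishing argument. From the approximately equivariant partitions of unity one manufactures, inside $\M_\U \cap \M'$, approximately $\theta_\U$-invariant copies of matrix algebras $M_n(\CC)$, and averaging a unitary cocycle over such a tower trivialises it: every approximate unitary $1$-cocycle for $\theta_\U$ valued in $\M_\U \cap \M'$ is an approximate coboundary. Unwinding the ultrapower, this yields at the level of $\M$ a unitary $1$-cocycle $u$ for $\theta$ together with an approximately central copy of $M_n(\CC)$ that is pointwise fixed by $\theta^u$, with control of all errors in $\|\cdot\|_2$ by a prescribed $\varepsilon$.

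Finally, I would run a telescoping argument. Iterating the previous step along an increasing chain of matrix algebras $M_{n_1} \subset M_{n_1 n_2} \subset \cdots$, with the tolerances $\varepsilon_k$ chosen summably small and the Følner sets chosen to pave compatibly, the successive cocycles compose to a convergent product and extract a genuine unitary $1$-cocycle $u: \Gamma \to \U(\M)$ together with a copy of $\overline{\bigcup_k M_{n_1 \cdots n_k}} \cong \R$ inside $\M$ that is pointwise fixed by $\theta^u$ and whose relative commutant $\M_0$ is $\theta^u$-invariant with $\M \cong \M_0 \weaktensor \R$. Reading off the splitting gives $\theta^u = \beta \otimes \Id_\R$ with $\beta = \theta^u{|}_{\M_0}$, which is precisely the reduction isolated in the first paragraph, and the theorem follows.
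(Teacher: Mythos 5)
A preliminary remark: the paper itself gives no proof of this statement --- it is imported wholesale from Ocneanu's monograph \cite{Ocneanu1985} and used as a black box to derive Corollary \ref{cor:AutomaticCB} --- so your proposal can only be compared with Ocneanu's own argument. At the level of architecture your sketch is faithful to that argument: reduce to producing a $\theta$-cocycle $u$ and a splitting $\M \cong \M_0 \weaktensor \R$ along which $\theta^u = \beta \otimes \Id_\R$; extract approximately equivariant Rokhlin towers indexed by F\o lner sets inside $\M_\U \cap \M'$; kill the cocycle obstructions by averaging over the towers; and telescope with summable tolerances to get an honest cocycle together with a pointwise-fixed, approximately central copy of $\R$ whose relative commutant gives the splitting. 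Your closing reduction $\theta \otimes \Id_\R \sim \beta \otimes \Id_{\R \weaktensor \R} \cong \beta \otimes \Id_\R = \theta^u \sim \theta$ is also correct.

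There are, however, two genuine problems. The first is your ``essential translation of the hypothesis'': freeness of the induced action $\theta_\U$ on $\M_\U \cap \M'$ does \emph{not} mean that $\theta_{\U,g}$ has no nonzero fixed points for $g \neq e$; it means proper outerness, i.e.\ that for $g \neq e$ there is no nonzero $a \in \M_\U \cap \M'$ satisfying $a \, x = \theta_{\U,g}(x) \, a$ for all $x \in \M_\U \cap \M'$. The two conditions are unrelated (absence of fixed points is an ergodicity-type condition, not a freeness condition), and the slip is not cosmetic: your own third step requires the fixed-point algebra of (a perturbation of) $\theta_\U$ on $\M_\U \cap \M'$ to be large enough to contain copies of $M_n(\CC)$ for every $n$, which is flatly impossible under your reading of the hypothesis. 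With proper outerness --- which is what the Rokhlin lemma actually consumes --- the inconsistency disappears, but as written the sketch contradicts itself. The second problem is one of completeness: the two ingredients you invoke, the noncommutative Rokhlin lemma along a paving structure of the amenable group and the vanishing of approximate $1$-cohomology in the central sequence algebra, are precisely the hard content of Ocneanu's work; you flag this honestly, but it leaves the proposal as a correct roadmap of the known proof rather than a proof. A smaller elision of the same kind: McDuffness must be invoked again in your third step, since it is what provides the matrix units that get meshed with the abelian Rokhlin towers to produce equivariant matrix algebras; it does not enter only through the initial identification $\M \cong \M \weaktensor \R$.
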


With the previous theorem at hand we can easily obtain the automatic complete boundedness of $\Id \rtimes T_m$ for those families of actions. Indeed, lest $\Psi$ be the normal $\ast$-isomorphism 
\[
  \N \rtimes_\theta \Gamma \xrightarrow{\quad \Psi \quad} \big( \N \weaktensor \R ) \rtimes_{\theta \otimes \Id} \Gamma
\]
obtained from the outer conjugation of $\theta$ and $\theta \otimes \Id_\R$ as in \eqref{eq:EquivariantIso}. Observe that, since the map $\Phi$ is equivariant, it follows that the following diagram commutes
\begin{equation*}
  \xymatrix@C=2cm{
    \N \rtimes_\theta \Gamma \ar[d]^{\Id \rtimes T_m} \ar[r]^-{\Psi} & \big( \N \weaktensor \R )  \rtimes_{\theta \otimes \Id} \Gamma \ar[d]^{\Id_\R \otimes (\Id \rtimes T_m)}\\
    \N \rtimes_\theta \Gamma \ar[r]^-{\Psi} & \big( \N \weaktensor \R) \rtimes_{\theta \otimes \Id} \Gamma.
  }
\end{equation*}
Therefore
\[
  \begin{split}
    \big\| \Id \rtimes & T_m: \N \rtimes_\theta \Gamma \to \N \rtimes_\theta \Gamma \big\|\\
                       & \, = \, \big\| \Id_\R \otimes (\Id \rtimes T_m): \big( \N \weaktensor \R ) \times_{\theta \otimes \Id} \Gamma 
                                        \to \big( \N \weaktensor \R ) \rtimes_{\theta \otimes \Id} \Gamma \big\|\\
                       & \, = \, \big\| \Id_\R \otimes (\Id \rtimes T_m): \R \weaktensor \big( \N \times_{\theta} \Gamma \big) \to \R \weaktensor \big(\N  \times_{\theta} \Gamma \big) \big\|\\
                       & \, = \, \big\| \Id \rtimes T_m: \N \rtimes_\theta \Gamma \to \N \rtimes_\theta \Gamma \big\|_\cb.
  \end{split}
\]
Observe that, when $\M$ is a McDuff factor and not just an algebra, the uniqueness of the trace implies that $\Psi$ is trace preserving and therefore the diagram above passes trivially to $L^p$-spaces. In the non-factor case we need to use the following lemma whose proof is a trivial application of the Radon-Nykodim theorem

\begin{lemma}
  \label{lem:ChangeTrace}
  Let $\tau_i$, $i \in \{1,2\}$ be two faithful, normal and $\theta$-invariant tracial states over $\N$, where $\theta: \Gamma \to \Aut(\N)$ is an action. For every $1 \leq p \leq \infty$ there is a complete isometry $j_p$ such that
  \begin{equation*}
    \xymatrix@C=2cm{
      L^p(\N \rtimes_\theta \Gamma;\tau_1 \rtimes \tau_\Gamma) \ar[d]^{\Id \rtimes T_m} \ar[r]^-{j_p}
        & L^p(\N \rtimes_{\theta} \Gamma;\tau_2 \rtimes \tau_\Gamma) \ar[d]^{\Id \rtimes T_m}\\
      L^p(\N \rtimes_\theta \Gamma;\tau_1 \rtimes \tau_\Gamma) \ar[r]^-{j_p} 
        & L^p( \N \rtimes_{\theta} \Gamma;\tau_2 \rtimes \tau_\Gamma).
    }
  \end{equation*} 
\end{lemma}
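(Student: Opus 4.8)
The plan is to realize $j_p$ as multiplication by a central power of the Radon--Nikodym derivative relating $\tau_1$ and $\tau_2$. First I would apply the Radon--Nikodym theorem for traces: since $\tau_1$ and $\tau_2$ are faithful normal tracial states on $\N$, there is a positive self-adjoint operator $h$, affiliated with the center $\Zent(\N)$ and with $h$, $h^{-1}$ both $\tau_1$-measurable, such that $\tau_2(x) = \tau_1(h\,x)$ for all $x \in \N$; traciality is exactly what forces $h$ to be central rather than merely positive. The $\theta$-invariance of both states then pins down $h$ as $\theta$-fixed: for every $g \in \Gamma$ and $x \in \N$,
\[
  \tau_1\big(\theta_g^{-1}(h)\,x\big) = \tau_1\big(h\,\theta_g(x)\big) = \tau_2\big(\theta_g(x)\big) = \tau_2(x) = \tau_1(h\,x),
\]
so $\theta_g^{-1}(h) = h$ by faithfulness of $\tau_1$. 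Consequently $h$ commutes with $\N$ and, since $\lambda_g\,h\,\lambda_g^\ast = \theta_g(h) = h$, with every $\lambda_g$; hence $h$ is central in $\M := \N \rtimes_\theta \Gamma$. Using that the canonical conditional expectation $\EE: \M \to \N$ is $\N$-bimodular one obtains $\tau_2 \rtimes \tau_\Gamma = (\tau_1 \rtimes \tau_\Gamma)(h\,\cdot\,)$ as tracial weights on $\M$.

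With this $h$ in hand I would define, for $1 \leq p < \infty$, the map $j_p(x) = h^{-\frac1p}\,x$ and set $j_\infty = \Id$. Since $h$ is central, functional calculus gives $\big|h^{-\frac1p}\,x\big|^p = h^{-1}\,|x|^p$, whence
\[
  \big\| j_p(x) \big\|_{L^p(\M;\,\tau_2 \rtimes \tau_\Gamma)}^p = (\tau_2 \rtimes \tau_\Gamma)\big(h^{-1}\,|x|^p\big) = (\tau_1 \rtimes \tau_\Gamma)\big(|x|^p\big) = \big\| x \big\|_{L^p(\M;\,\tau_1 \rtimes \tau_\Gamma)}^p,
\]
so $j_p$ is an isometry. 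Complete isometry follows from the identical computation after amplifying by $M_n(\CC)$, as $1_{M_n} \otimes h$ remains central with the corresponding Radon--Nikodym identity for the amplified weights. Finally, because $h^{-\frac1p}$ is central it commutes with each $\lambda_g$, so multiplication by it preserves every Fourier component $x_g \rtimes \lambda_g$; thus $j_p \circ (\Id \rtimes T_m) = (\Id \rtimes T_m) \circ j_p$ on the $\|\cdot\|_p$-dense span of the $x_g \rtimes \lambda_g$, which is precisely the commutativity of the stated diagram.

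The only genuine difficulty is technical: $h$ and its powers $h^{-1/p}$ are in general unbounded, so one must first make sense of $h^{-\frac1p}\,x$ as a product of $\tau$-measurable operators affiliated with $\M$, define $j_p$ on a suitable dense domain (for instance the measurable span of $\{x_g \rtimes \lambda_g\}$), and then extend it by the isometric identity above; faithfulness of both states is what guarantees that $h^{-1}$ is densely defined and $\tau_1$-measurable. The endpoint $p = \infty$ is immediate since the $L^\infty$-norm is trace-independent, so $j_\infty = \Id$ works and is trivially intertwining, and the intermediate cases may alternatively be recovered by interpolation between $p = 1$ and $p = \infty$.
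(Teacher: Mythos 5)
Your proof is correct and takes essentially the same route as the paper's: both apply the Radon--Nikodym theorem to write $\tau_2 = \tau_1(\delta\,\cdot\,)$ for a positive central element $\delta$ (your $h$), use $\theta$-invariance of the two traces to conclude that $\delta$ is $\theta$-fixed and hence central in $\N \rtimes_\theta \Gamma$, and then define $j_p(x) = \delta^{-\frac1p}\,x$, whose isometric and intertwining properties follow from centrality exactly as you compute. The paper's proof is only a sketch, so the details you supply (the trace identity forcing $\theta_g(\delta)=\delta$, the matrix amplification for the complete isometry, and the care with unbounded operators, which is in fact mild here since both crossed-product traces are finite) are precisely the ones it leaves out.
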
 

\begin{proof}
  We just add an sketch for completion. First notice that since both $\tau_1$ and $\tau_2$ are faithful tracial states we have that $\tau_2(x) = \tau(\delta \, x)$ for some positive central element in $L^1(\Zent(\N))$ with unit norm. The fact that both are $\theta$-invariant implies that $\delta$ is fixed by the action of $\theta$. Therefore $\iota(\delta) = \delta \rtimes \1$ is also central and we have that $\tau_2 \rtimes \tau_\Gamma(x) = \tau_1 \rtimes \tau_\Gamma(\iota(\delta) \, x)$, for every $x \in \N \rtimes_\theta \Gamma$. Let us denote $\iota(\delta)$. Simply by $\delta$. Notice that it is invertible as an element of $L^1(\Zent(\N))$ by the faithfulness of $\tau_1$ and $\tau_2$. We have that the map $j_p(x) = \delta^{-\frac1{p}} x$ is an $L^p$-isometry, equivariant and that the diagram above commutes. 
\end{proof}

\begin{corollary}
  \label{cor:AutomaticCB}
  Let $\theta: \Gamma \to \Aut(\M, \tau)$ be like in Theorem \ref{thm:McDuffouterconjugate} and assume that it is trace preserving. For every $1 < p < \infty$, we have that 
    \[
    \big\| \Id \rtimes T_m: L^p(\N \rtimes_\theta G) \to L^p(\N \rtimes_\theta G) \big\|
    = 
    \big\| \Id \rtimes T_m: L^p(\N \rtimes_\theta G) \to L^p(\N \rtimes_\theta G) \big\|_\cb
  \]
\end{corollary}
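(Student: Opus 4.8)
The plan is to upgrade the $W^\ast$-level computation carried out immediately before the statement to the setting of noncommutative $L^p$-spaces, the only genuinely new ingredients being the behaviour of the isomorphism $\Psi$ under the traces and the $L^p$-analogue of the identity $\| \Id_\R \otimes S \| = \| S \|_\cb$. Throughout I work in the range $1 < p < \infty$.

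First I would recall that, by Theorem \ref{thm:McDuffouterconjugate}, the actions $\theta$ and $\theta \otimes \Id_\R$ are outer conjugate, so \eqref{eq:EquivariantIso} furnishes an equivariant normal $\ast$-isomorphism
\[
  \Psi: \N \rtimes_\theta \Gamma \tto (\N \weaktensor \R) \rtimes_{\theta \otimes \Id} \Gamma \cong \R \weaktensor (\N \rtimes_\theta \Gamma)
\]
which, as displayed above, intertwines $\Id \rtimes T_m$ with $\Id_\R \otimes (\Id \rtimes T_m)$. The core of the argument is that this commuting square descends to $L^p$-spaces as a square of isometries. When $\M$ is a factor this is immediate: uniqueness of the trace forces the isomorphism $\phi$ underlying \eqref{eq:EquivariantIso} to be trace preserving, hence $\Psi$ is $\tau_\rtimes$-preserving and extends to an $L^p$-isometry for every $1 \leq p \leq \infty$.

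In the non-factor case $\phi$ need not preserve $\tau$, so $\Psi$ pulls the canonical trace of the target back to a trace of the form $\tau' \rtimes \tau_\Gamma$ on the source, where $\tau' = (\tau \otimes \tau_\R) \circ \phi$ is again a faithful normal $\theta$-invariant tracial state on $\N$ (its invariance coming from that of $\tau \otimes \tau_\R$ together with the outer-conjugacy relation, inner perturbations by the cocycle leaving traces unchanged). At this point I would invoke Lemma \ref{lem:ChangeTrace} to produce a complete $L^p$-isometry $j_p$ between $L^p(\N \rtimes_\theta \Gamma; \tau \rtimes \tau_\Gamma)$ and $L^p(\N \rtimes_\theta \Gamma; \tau' \rtimes \tau_\Gamma)$ commuting with $\Id \rtimes T_m$. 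Composing $j_p$ with the $L^p$-extension of $\Psi$, which is isometric for the trace $\tau' \rtimes \tau_\Gamma$ precisely by the definition of $\tau'$, yields an $L^p$-isometry intertwining $\Id \rtimes T_m$ with $\Id_\R \otimes (\Id \rtimes T_m)$, whence
\[
  \big\| \Id \rtimes T_m \big\|_{L^p(\N \rtimes_\theta \Gamma)}
  = \big\| \Id_\R \otimes (\Id \rtimes T_m) \big\|_{L^p(\R \weaktensor (\N \rtimes_\theta \Gamma))}.
\]

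Finally I would close with the $L^p$-version of the fact used at the $W^\ast$-level: for any normal map $S$ on $L^p(\M)$ one has $\| \Id_\R \otimes S \|_{L^p(\R \weaktensor \M)} = \| S \|_\cb$. The inequality $\geq$ holds because each $L^p(M_n \weaktensor \M)$ embeds isometrically into $L^p(\R \weaktensor \M)$ (the normalization of the trace on the copy of $M_n \subset \R$ cancels in the operator norm), and $\Id_\R \otimes S$ restricts there to $\Id_{M_n} \otimes S$, so taking the supremum over $n$ recovers $\| S \|_\cb$; the reverse inequality is the defining property of the completely bounded norm on noncommutative $L^p$-spaces. Applying this with $S = \Id \rtimes T_m$ gives the asserted equality. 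The only genuinely delicate point is the trace bookkeeping in the non-factor case, which is exactly what Lemma \ref{lem:ChangeTrace} is engineered to resolve; everything else is a faithful transcription of the already-displayed $L^\infty$-computation.
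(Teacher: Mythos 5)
Your proof is correct and follows essentially the same route as the paper's: Theorem \ref{thm:McDuffouterconjugate} produces the equivariant isomorphism $\Psi$, the trace $\tau' = (\tau \otimes \tau_\R) \circ \phi$ (the paper's $\tau_\phi$) combined with Lemma \ref{lem:ChangeTrace} handles the non-factor trace bookkeeping, and composing the two isometries transfers the $L^p$-norm of $\Id \rtimes T_m$ to that of $\Id_\R \otimes (\Id \rtimes T_m)$. The only difference is that you make explicit the final identification $\big\| \Id_\R \otimes S : L^p(\R \weaktensor \M) \to L^p(\R \weaktensor \M) \big\| = \| S \|_\cb$ via matrix subalgebras of $\R$, a step the paper leaves implicit.
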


\begin{proof}
  Since, by Theorem \ref{thm:McDuffouterconjugate}, $\theta$ and $\theta \otimes \Id_\R$ are outer conjugate there is a $\ast$-isomorphism $\phi: \N \to \N \weaktensor \R$ and a $1$-cocycle $u: \Gamma \to \U(\N)$. Denote by $\Psi: \N \rtimes_{\theta} \Gamma \to \N  \weaktensor \R \rtimes_{\theta \otimes \Id} \Gamma$ the $\ast$-isomorphism given by sending $x \rtimes \lambda_g$ to $\phi(x) \, u_g \rtimes \lambda_h$. Let us denote by $\tau_\phi$ the tracial state $(\tau \otimes \tau_\R) \circ \phi$. We have that $\tau_\phi$ is faithful, since both $\tau \otimes \tau_\R$ are, and $\theta$-invariant. We have that the following diagram commute
  \begin{equation*}
    \xymatrix@C=1.75cm@R=1.25cm{
      L^p(\N \rtimes_\theta \Gamma) \ar[d]^{\Id \rtimes T_m} \ar[r]^-{\Psi}
        & L^p(\R \weaktensor \N \rtimes_{\Id \otimes \theta} \Gamma;\tau_\phi \rtimes \tau_\Gamma) \ar[r]^-{j_p} \ar[d]^{\Id_\R \otimes \Id \rtimes T_m}
        & L^p(\R \weaktensor \N \rtimes_{\Id \otimes \theta} \Gamma) \ar[d]^{\Id_\R \otimes \Id \rtimes T_m} \\
      L^p(\N \rtimes_\theta \Gamma) \ar[r]^-{\Psi}
        & L^p(\R \weaktensor \N \rtimes_{\Id \otimes \theta} \Gamma;\tau_\phi \rtimes \tau_\Gamma)  \ar[r]^-{j_p} 
        & L^p(\R \weaktensor \N \rtimes_{\Id \otimes \theta} \Gamma).
    }
  \end{equation*}
  The map $\Psi$ extends to an $L^p$-isometry since it is trace preserving by the definition of $\tau_\phi$. The map $j_p$ is $L^p$-isometric by Lemma \ref{lem:ChangeTrace}. Both intertwine the required multipliers and are equivariant. Therefore the norm of $\Id \rtimes T_m$ equals that of $\Id_\R \otimes(\Id \rtimes T_m)$ and the claim follows.
\end{proof}

The Theorem \ref{thm:McDuffouterconjugate} above can be simplified in certain cases or extended to more general contexts. For instance, if $\N = \R$, then the conditions of strong central triviality can be reduced to freeness. The reason to require freeness for the action $\theta$ is that, if we assume that $\theta_g$ can be inner for some $g \neq e$, then, the it is posible to produce a cohomological obstruction to the outer conjugation. General actions of amenable groups on $\R$ are not all equivalent under outer conjugation and Connes' standard invariant, see \cite{Connes1977Periodic}, is required. Although we have formulated Theorem \ref{thm:McDuffouterconjugate} in the case of finite $\N$ it s also possible to obtain similar results in the semifinite case. 

We also point out the the automatic boundedness of $\Id \rtimes T_m: L^p(\N \rtimes_\theta \Gamma) \to L^p(\N \rtimes_\theta \Gamma)$ for some actions on sufficiently large algebras $\N$ is very different, and a priori unrelated, to the problem of determining if the map $T_m:L^p(\L \Gamma) \to L^p(\L \Gamma)$ is completely bounded. The analogous construction to what we have used in Corollary \ref{cor:AutomaticCB} would be to try to embed arbitrarily large matrices $M_k(\CC) \into M_k(\Gamma) \otimes \L \Gamma \cong \L \Gamma$, when $\L \Gamma$ is McDuff, in a way that intertwines $\Id_{M_k} \otimes T_m$ and $T_m$, which is not doable for all $m \in A(\Gamma)$, since $m = \delta_{g_0} \in A(\Gamma)$ and $T_m$ is of rank $1$, while $\Id_{M_k} \otimes T_m$ would have larger rank. 

\section{Transference and amenable equivariant homomorphisms} \label{sct:AmenableHom}
Let $\M$, $\N$ be von Neumann algebras. We have that a Hilbert space $H$ is a $W^\ast$-correspondence  iff there are normal $\ast$-homomorphisms $r: \N^\op \to \B(H)$ and $\ell: \M \to \B(\H)$, whose ranges commute. Such category of bimodules with bounded bimodular maps as their morphisms was introduced as a suitable substitute for groups representations in the context of von Neumann algebras, see \cite{ConnesJones1985}. More material on correspondences can be found on \cite{Anan1995AmenableCorr, Popa1986Notes} as well as \cite[Appendix B]{ConnesBook1994} and \cite[Chapter 13]{AnaPopa2017II1}. 
The homomorphisms of $\M$ into $\B(\H)$ are usually called the right and left actions respectively and we will denote them simply by
\[
  x \cdot \xi \cdot y = \ell(x) \, r(y) \, \xi,
\]
for every $x \in \N$, $y \in \M$ and $\xi \in H$. We will write $H$ as ${}_\N H_\M$ whenever we want to make the dependence of $\N$ and $\M$ explicit. Like in the context of representations, it is possible to define a natural notion of weak containment of correspondences ${}_\N K_\M \prec {}_\N H_\M$ as follows

\begin{definition}[{\cite[Definition 13.3.8]{AnaPopa2017II1}}]
  \label{def:WeakContainment}
  It is said that $K$ is \emph{weakly contained} in $H$, and denoted ${}_\N K_\M \prec {}_\N H_\M$ iff for every $\varepsilon > 0$ and finite sets $E \subset \N$ and $F \subset \M$, it holds that for every unit vector $\xi$, there are vectors $\{\eta_1, \eta_2, ..., \eta_m\}$ such that
  \[
    \Big| \langle \xi, x \cdot \xi \cdot y \rangle - \sum_{j = 1}^m \langle \eta_j, x \cdot \eta_j \cdot y \rangle \Big| <\varepsilon,
  \]
  for every $x \in E$, $y \in F$.
\end{definition}

The definition above admits the following characterizations, see \cite[Section 13.3]{AnaPopa2017II1}

\begin{proposition}
  \label{prp:WeakContainment}
  Let ${}_\N K_\M$ and ${}_\N H_\M$ be correspondences. The following are equivalent
  \begin{enumerate}[label={\rm \textbf{(\roman*)}}, ref={\rm \textbf{(\roman*)}}]
    \item ${}_\N K_\M \prec {}_\N H_\M$.
    \item There is a net of maps $T_\alpha: K \to H \otimes \ell^2$ such that
    \begin{eqnarray*}
      \big\| T_\alpha( x \cdot \xi \cdot y ) - x \cdot T(\xi) \cdot y \big\|
        & \to & 0\\
      \big\langle T_\alpha(\xi), T_\alpha(\eta) \big\rangle
        & \to & \big\langle \xi, \eta \big\rangle.
    \end{eqnarray*}
    for every $\xi, \eta \in H$ and $x \in \M$ and $y \in \N$.
    \item There is a bimodular isometry
    $T:{}_\N K_\M \tto \leftidx{_\N}{{(H \otimes_2 \ell^2)}}_\M^\U$
  \end{enumerate}
\end{proposition}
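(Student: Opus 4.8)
The plan is to run the cycle (ii) $\Rightarrow$ (iii) $\Rightarrow$ (i) $\Rightarrow$ (ii). Two of the three arrows are bookkeeping with the ultrapower, and the whole difficulty is concentrated in the last one. Throughout I fix the convention that $\N$ and $\M$ act on an ampliation $H \otimes_2 \ell^2$ diagonally, $x \cdot (\eta \otimes e_k) \cdot y = (x \cdot \eta \cdot y) \otimes e_k$, so that for $\zeta = \sum_k \eta_k \otimes e_k$ one has $\langle \zeta, x \cdot \zeta \cdot y \rangle = \sum_k \langle \eta_k, x \cdot \eta_k \cdot y \rangle$; this identifies the clause of Definition \ref{def:WeakContainment} with the assertion that each positive form $\langle \xi, (\,\cdot\,) \xi \rangle$ on $\N \algtensor \M^\op$ is approximated, uniformly on finite sets, by a single form $\langle \zeta, (\,\cdot\,) \zeta \rangle$ coming from a vector of the ampliation of $H$.

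For (ii) $\Rightarrow$ (iii) I would simply set $T(\xi) = (T_\alpha \xi)_\alpha + n_\U$. The second limit bounds $\|T_\alpha \xi\|$ along $\U$, so this is a legitimate element of $\prod_\U (H \otimes_2 \ell^2)$; linearity passes componentwise, $\lim_{\alpha \to \U} \langle T_\alpha \xi, T_\alpha \eta \rangle = \langle \xi, \eta \rangle$ is the isometry, and bimodularity holds because the first limit sends the representative of $T(x \cdot \xi \cdot y) - x \cdot T(\xi) \cdot y$ into the null ideal $n_\U$. For (iii) $\Rightarrow$ (i), fix a unit $\xi$, finite $E \subset \N$, $F \subset \M$ and $\varepsilon > 0$, and lift $T(\xi)$ to a bounded net $(\zeta_\beta)_\beta$. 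Since $T$ is an isometric bimodule map, $\langle T\xi, x \cdot T\xi \cdot y \rangle = \langle T\xi, T(x\cdot\xi\cdot y)\rangle = \langle \xi, x \cdot \xi \cdot y\rangle$, so on a member of $\U$ the scalars $\langle \zeta_\beta, x \cdot \zeta_\beta \cdot y\rangle = \sum_k \langle \eta^\beta_k, x \cdot \eta^\beta_k \cdot y\rangle$ are within $\varepsilon/2$ of $\langle \xi, x\cdot\xi\cdot y\rangle$ simultaneously over the finite set $E \times F$; truncating the sum in $k$ and absorbing the tail into $\varepsilon$ yields the finite family $\{\eta_1, \dots, \eta_m\}$ of Definition \ref{def:WeakContainment}.

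The implication (i) $\Rightarrow$ (ii) is the heart of the matter. Indexing by the directed set of quadruples $d = (E, F, \Lambda, \varepsilon)$, where $\Lambda \subset K$ is finite and may be assumed closed under the actions of $E$ and $F$ (so that $\spn \Lambda$ is invariant under these finite sets), it suffices to produce, for each $d$, a linear map $T_d : K \to H \otimes_2 \ell^2$ that is $\varepsilon$-isometric and $\varepsilon$-bimodular on $\Lambda$ relative to $E, F$; the net $(T_d)_d$ then realises (ii). Choosing a basis of $\spn \Lambda$, the construction of $T_d$ reduces to finding vectors $\zeta_i \in H \otimes_2 \ell^2$ whose \emph{joint} coefficients $\langle \zeta_i, x \cdot \zeta_j \cdot y\rangle$ approximate $\langle \xi_i, x \cdot \xi_j \cdot y\rangle$ for all $i, j$ and $(x,y) \in (E \cup \{1\}) \times (F \cup \{1\})$; setting $T_d \xi_i = \zeta_i$ and extending linearly, the approximate bimodularity on $\Lambda$ and the near-isometry then follow by expanding the relevant norms as fixed combinations of these coefficients.

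The genuine obstacle is exactly the passage from the \emph{diagonal} approximation furnished by Definition \ref{def:WeakContainment} to this \emph{joint} approximation of the off-diagonal coefficients: a direct polarisation is useless here, since the single-vector clause returns unrelated approximants for each of the vectors $\xi_i + \omega \xi_j$. I would resolve this Fell-type difficulty by reformulating weak containment representation-theoretically. A correspondence ${}_\N H_\M$ is the same datum as a separately normal $*$-representation $\pi_H$ of $\N \algtensor \M^\op$, and the single-coefficient condition of Definition \ref{def:WeakContainment} is equivalent to the norm domination $\|\pi_K(t)\| \leq \|\pi_H(t)\|$ on this algebra; this equivalence is the Fell-absorption content and is part of the theory of weak containment of $C^\ast$-representations. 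Once weak containment is recast as $\pi_K \prec \pi_H$ in this sense, the simultaneous approximation of the finite array $[\langle \xi_i, \pi_K(t) \xi_j\rangle]$ by coefficients of an ampliation of $\pi_H$ is the standard multi-vector form of Fell's theorem, and cutting the resulting near-intertwiner down to the vectors $\xi_i$ delivers the $\zeta_i$ and closes the cycle.
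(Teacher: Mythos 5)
The paper never actually proves this proposition: it is imported, with a pointer, from \cite[Section 13.3]{AnaPopa2017II1}, so the only meaningful comparison is with that source --- and your argument is in substance a reconstruction of the route taken there. Your two ``easy'' arrows are fine: the ultraproduct of the $T_\alpha$ is automatically linear and isometric by polarising the limits of the inner products, and lifting $T(\xi)$ to a representative net and truncating its $\ell^2$-expansion (the tail being dominated by $\|x\| \, \|y\| \sum_{k > N} \|\eta_k\|^2$) recovers Definition \ref{def:WeakContainment}. You have also correctly located the real content of (i) $\Rightarrow$ (ii): the definition only approximates \emph{diagonal} coefficients, one vector at a time, and the upgrade to joint approximation of the array $\langle \xi_i, x \cdot \xi_j \cdot y \rangle$ is exactly what Fell's weak-containment theory supplies once a correspondence is read as a binormal representation of $\N \algtensor \M^{\op}$. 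Concretely, Definition \ref{def:WeakContainment} says every vector state of $\pi_K$ is a weak-$\ast$ limit of sums of vector states of $\pi_H$, which forces $\ker \pi_H \subseteq \ker \pi_K$ in the $C^\ast$-completion, i.e.\ the norm domination you state; the multi-vector form then follows, for instance, by applying the one-vector statement to the representation $\pi_K \otimes \Id_{M_n}$ of the amplified algebra, since off-diagonal coefficients of $\pi_K$ are diagonal coefficients of the amplification. (Calling this ``Fell absorption'' is a misnomer --- it is Fell's weak-containment theorem --- but the mathematics you invoke is the right one.) Two harmless repairs to your (i) $\Rightarrow$ (ii): a finite set $\Lambda$ cannot literally be closed under the actions of $E$ and $F$, since such orbits are generically infinite; what your norm expansion of $\| T_d(x \cdot \xi \cdot y) - x \cdot T_d(\xi) \cdot y \|^2$ actually needs, and what suffices, is that $\Lambda$ contain the finitely many translates $(E \cup \{1\}) \cdot \Lambda_0 \cdot (F \cup \{1\})$ of the initially prescribed vectors $\Lambda_0$. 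Second, extending $T_d$ linearly off a basis of $\spn \Lambda$ is slightly hazardous, because the basis and the coefficients of a fixed vector change with $d$ and the error estimates degrade accordingly; it is cleaner to define $T_d$ pointwise on $\Lambda$ by $T_d \xi = \zeta_\xi$ and arbitrarily elsewhere, which is permitted since condition (ii) only constrains values at fixed vectors and linearity of the limit map in (iii) is automatic from the polarisation identity you already used.
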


Let $\N$ be a semifinite von Neumann algebra with a n.s.f weight $\tau: \M_+ \to [0,\infty]$. Its trivial correspondence $_{\N}L^2(\N)_{\N}$ is given by the GNS representation associated to the tracial weight and the natural left and right actions. This correspondence plays the same role as the trivial representation of groups. We recall also that if $K = L^2(\N)$ is the proposition above, then we can remove the extra $\ell^2$ factor and therefore $_{\N}L^2(\N)_{\N} \prec {}_{\N}H_{\N}$ iff there is a bimodular isometry $J$ from $L^2(\N)$ into $H^\U$. The following follows after taking a sequence of vectors $\xi_\alpha \in \Ball(H)$ such that $J(\1) = (\xi_\alpha)_\alpha^\U$ gives the following.

\begin{proposition}
  \label{prp:WeakContainmentTriv}
  Let ${}_\N H_\M$ be a correspondence. The following are equivalent
  \begin{enumerate}[label={\rm \textbf{(\roman*)}}, ref={\rm \textbf{(\roman*)}}]
    \item \label{itm:WeakContainmentTriv.1}
    $_{\N}L^2(\N)_{\N} \prec {}_{\N}H_{\N}$.
    \item \label{itm:WeakContainmentTriv.2}
    There is a sequence of unit vectors $(\xi_\alpha)_\alpha$ in $H$ such that:
    \begin{itemize}
      \item $\displaystyle{\lim_\alpha \big\| x \cdot \xi_\alpha - \xi_\alpha \cdot x \big\|_2 = 0}$, for every $x \in \N$.
      \item $\displaystyle{ \lim_{\alpha} \big\langle \xi_\alpha, \, x \cdot \xi_\alpha \big\rangle = \tau(x)}$, for every $x \in \N$.
    \end{itemize}     
  \end{enumerate}
\end{proposition}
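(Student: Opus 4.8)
The plan is to invoke Proposition~\ref{prp:WeakContainment} together with the observation recorded just before the statement: when $K = L^2(\N)$ the auxiliary $\ell^2$ factor can be dropped, so that condition~\ref{itm:WeakContainmentTriv.1} is equivalent to the existence of an $\N$-bimodular isometry $J : {}_\N L^2(\N)_\N \to {}_\N H^\U_\N$. The whole argument then rests on transporting the canonical tracial vector $\1 \in L^2(\N)$ across $J$, exploiting the elementary fact that in the trivial bimodule the left and right actions agree on $\1$, that is $x \cdot \1 = x = \1 \cdot x$ for every $x \in \N$.

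For \ref{itm:WeakContainmentTriv.1} $\Rightarrow$ \ref{itm:WeakContainmentTriv.2} I would start from such a $J$ and choose representatives $\xi_\alpha \in \Ball(H)$ with $J(\1) = (\xi_\alpha)_\alpha^\U$. Since $J$ is isometric, $\lim_{\alpha \to \U} \| \xi_\alpha \| = \| \1 \|_2 = 1$, so after normalising we may take the $\xi_\alpha$ to be unit vectors. Bimodularity gives $x \cdot J(\1) - J(\1) \cdot x = J(x \cdot \1 - \1 \cdot x) = 0$ in $H^\U$, which unwinds to $\lim_{\alpha \to \U} \| x \cdot \xi_\alpha - \xi_\alpha \cdot x \| = 0$; and the combination of isometry and bimodularity yields $\lim_{\alpha \to \U} \langle \xi_\alpha, x \cdot \xi_\alpha \rangle = \langle \1, x \cdot \1 \rangle_{L^2(\N)} = \tau(x)$. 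A routine reindexing of these $\U$-limits by the directed set of pairs $(F,\varepsilon)$, with $F \subset \N$ finite and $\varepsilon > 0$, then produces a genuine net realising the two displayed limits simultaneously.

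For \ref{itm:WeakContainmentTriv.2} $\Rightarrow$ \ref{itm:WeakContainmentTriv.1} I would run the construction in reverse: on the dense subalgebra $\N \cap L^2(\N)$ set $J(x) = (x \cdot \xi_\alpha)_\alpha^\U$, noting that the almost-centrality hypothesis makes this coincide with $(\xi_\alpha \cdot x)_\alpha^\U$. The tracial hypothesis gives $\| J(x) \|^2 = \lim_{\alpha \to \U} \langle \xi_\alpha, x^\ast x \cdot \xi_\alpha \rangle = \tau(x^\ast x) = \| x \|_2^2$, so $J$ is isometric and extends to all of $L^2(\N)$. Left bimodularity is immediate from $a \cdot J(x) = (ax \cdot \xi_\alpha)_\alpha^\U = J(ax)$, whereas right bimodularity follows from the second presentation $J(x) = (\xi_\alpha \cdot x)_\alpha^\U$ and the module identity $\xi_\alpha \cdot x \cdot b = \xi_\alpha \cdot (xb)$. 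Since such an $\N$-bimodular isometry into $H^\U$ is exactly what Proposition~\ref{prp:WeakContainment} demands, weak containment \ref{itm:WeakContainmentTriv.1} follows.

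The only genuinely delicate points are bookkeeping rather than conceptual. First, the passage between $\U$-limits and the net of \ref{itm:WeakContainmentTriv.2} must be handled so that a single net realises all approximations at once; this is the diagonalisation over $(F,\varepsilon)$ mentioned above, and it is where one should be careful. Second, in the properly semifinite case $\1 \notin L^2(\N)$, so both the symbol $\1$ and the identity $\langle \xi_\alpha, x \cdot \xi_\alpha \rangle \to \tau(x)$ should be read against the Tomita subalgebra $\mathfrak{n}_\tau \cap \mathfrak{n}_\tau^\ast$, on the $L^2$-closure of which the entire computation takes place; the extension of $J$ by continuity to all of $L^2(\N)$ then presents no difficulty.
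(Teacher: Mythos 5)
Your proof is correct and takes essentially the same route as the paper: the paper's own (one-line) argument is exactly to take representatives $\xi_\alpha \in \Ball(H)$ of $J(\1)$ for the bimodular isometry $J: L^2(\N) \to H^\U$ supplied by Proposition~\ref{prp:WeakContainment} with the $\ell^2$ factor removed, and your two directions simply unwind this correspondence in both senses. The extra care you take with the reindexing of $\U$-limits into a genuine net and with the semifinite normalisation fills in bookkeeping the paper leaves implicit, but does not change the approach.
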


\begin{remark} \normalfont
  \label{rmk:FactorCase}
  Recall that the second condition on Proposition \ref{prp:WeakContainmentTriv}.\ref{itm:Amenable1Cocycles.2} is superfluous if $\N$ is factor. Indeed, by the centrality of the vectors $\xi_\alpha$, we have that any weak-$\ast$ accumulation point $\varphi:\N \to \CC$ of the states
  \[
    \varphi_\alpha(x)
    \, = \,
    \big\langle \xi_\alpha, \pi(x) \, \xi_\alpha \big\rangle
  \] 
  is a (non-necessarily normal) tracial state on $\N$. But the uniqueness of the trace of $\N$ gives that $\tau = \varphi$.   
\end{remark}

\textbf{GNS construction for completely positive maps.}
Recall that given a normal and completely positive map $\Phi: \N \to \M$, where we are are assuming the algebra $\M$ to be semifinite and have a faithful tracial weight $\tau$, there is an associated $\N$-$\M$-correspondence $H(\Phi)$. Let $\mathfrak{N}_\tau \subset \M$ be the dense subspace given by $\M \cap L^2(\M)$. Let $\N \algtensor \mathfrak{N}_\tau$ be the algebraic tensor product of $\N$ and $\mathfrak{N}_\tau$ and define the positive sesquilinear form 
\[
  \bigg\langle \sum_{j = 1}^m x_1 \otimes y_2, \sum_{k = 1}^n x_2 \otimes y_2 \bigg\rangle_\Phi
  = 
  \tau \bigg\{ \sum_{j = 1}^m \sum_{k = 1}^n y_1^\ast \, \Phi ( x_1^\ast x_1 ) \,  y_1 \bigg\},
\]
Quotienting out the nulspace $N$ of the sesquilinear form above gives a pre-inner product and completing with respect to the associated norm gives the Hilbert space $H(\Phi)$. The left and right actions are defined by extension of
\[
  x \cdot (z \otimes w) \cdot y = x \, z \otimes w \, y,
\]
and trivial computations gives that both are bounded and normal representations.

If $H$ is a $\N$-$\M$-correspondence its \emph{contragradient} $\bar{H}$ is the $\M$-$\N$ correspondence given by the conjugate Hilbert space $\bar{H}$ together with the actions
\[
  x \cdot \bar{\xi} \cdot y \, = \, \overline{ y^\ast \cdot \xi \cdot x^\ast},
\]
for every $\xi \in H$, $x \in \M$ and $y \in \N$.

\textbf{Connes' tensor product.}
The last notion that we need to recall in order to define what amenable completely positive maps is that of Connes' tensor product. If $H$ and $K$ are $\N$-$\M$ and $\M$-$\Q$ correspondences respectively, for semifinite von Neumann algebras $\N$, $\M$ and $\Q$, the Connes' tensor product is a third $\N$-$\Q$ correspondence $H \weaktensor_{\M} K$. Let $H^\circ \subset H$ be the set of all \emph{left-bounded} vectors of $H$, ie vectors for which the operator $x \mapsto \xi \cdot x$ extends to a bounded operator $L_\xi: L^2(\M) \to H$. Given to left-bounded vectors $\xi, \eta \in H^\circ$ their $\M$-product is given by
\[
  \langle \xi, \eta \rangle_\M = L_\xi^\ast L_\eta.
\]
Observe that $L_\xi^\ast L_\eta: L^2(\M) \to L^2(\M)$ is right $\M$-modular and therefore belongs to $\M \subset \B(L^2 \M)$. We define $H \weaktensor_\M K$ as the space resulting from taking $H^\circ \algtensor K$ with the sesquilinear positive form given by linear extension of
\begin{equation}
  \label{eq:InnerprodConnesFusion}
  \langle \xi_1 \otimes \eta_1, \xi_2 \otimes \eta_2 \rangle 
  \, = \,
  \big\langle \eta_1, \langle \xi_1, \xi_2 \rangle_\M \, \eta_2 \big\rangle_K.
\end{equation}
Quotienting out the nulspace associated to that form and taking the metric closure gives the Connes' product. Slightly ambiguously, we will represent the class modulo the nulspace associated to $\xi \otimes \eta$ by $\xi \otimes \eta$ itself. It is easily seen that for every $x \in \M$ it holds that
\[
  \xi \otimes x \cdot \eta - \xi \cdot x \otimes \eta \, = \, 0,
\]
where $\xi \in H^\circ$ and $\eta \in K$.

The following notion of amenable correspondence has its origin in \cite{Popa1986Notes} and \cite{Anan1995AmenableCorr}. It is also a parallel to the notion of amenable representation \cite{Bekka1990}.

\begin{definition}[{\cite{Anan1995AmenableCorr}}] \
  \begin{enumerate}[label={\rm \textbf{(\roman*)}}, ref={\rm (\roman*)}]
    \item 
    A correspondence $_{\N}H_{\M}$ is (left) amenable iff $L^2(\N) \prec H \otimes_{\M} \overline{H}$.
    \item 
    A complete positive map $\Phi: \N \to \M$ is left amenable iff its associated $\N$-$\M$ correspondence $H(\Phi)$ is.
  \end{enumerate}   
\end{definition}

The case in which the completely positive map above is a normal $\ast$-homorphism between semifinite von Neumann algebras $\pi: \N \to \M$ is quite illustrative and easy to describe. First start noticing that $H(\pi) \cong L^2(\M)$ as $\N$-$\M$-bimodules, where the left and right actions on $L^2(\M)$ are given by
\[
  x \cdot \xi \cdot y \, = \, \pi(x) \, \xi \, y.
\]
The bimodular isomorphism $W: H(\pi) \to L^2(\M)$ is just defined over $\N \algtensor \M$ as 
\[
  \sum_{j = 1}^n x_j \otimes y_j \xmapsto{\quad W \quad} \sum_{j = 1}^n \pi(x_j) \, y_j
\]
and a straightforward computation yields that the above map is isometric and bimodular, the surjectivity is immediate.
Next, notice that
\[
  H(\pi) \, \weaktensor_\M \, \overline{H(\pi)} \, \cong \, L^2(\M)
\]
is another isomorphism of $\N$-$\N$-bimodules, where the actions on the right hand side are given by $x \cdot \xi \cdot y \, = \, \pi(x) \xi \pi(y)$. In order to construct that isomorphism just notice that $L^2(\M)^\circ = L^2(\M) \cap \M$ and that, for left bounded vectors $\xi, \eta \in L^2(\M) \cap \M$ their $\M$-valued inner product is $\langle \xi, \eta \rangle_\M = \xi^\ast \eta$. Then \eqref{eq:InnerprodConnesFusion} gives that $\xi \otimes \eta \mapsto \xi \, \eta$ is a bimodular isomorphism. As a consequence of this discussion we have that $\pi$ is (left) amenable iff
\[
  {}_\N L^2(\N)_\N \prec {}_{\pi[\N]} L^2(\M)_{\pi[\N]},
\]
or equivalently iff there is a $\N$-bimodular isometry $L^2(\N) \into L^2(\M)^\U$, for some proper ultrafilter $\U$. Observe that, in this particular case, we can choose the sequence of unit vectors $(\xi_\alpha)_\alpha \subset L^2(\M)$ that satisfy the properties of Proposition \ref{prp:WeakContainmentTriv}.\ref{itm:WeakContainmentTriv.2} to be in the positive cone $L^2(\M)_+ \subset L^2(\M)$ without loss of generality. Indeed, let us notice that if $\xi_\alpha$ is a centralizing sequence, meaning that $(\xi_\alpha)_\alpha^\U \in L^2(\M)^\U \cap \pi[\N]'$, so is $\xi_\alpha^\ast$ and therefore $\xi_\alpha \, \xi_\alpha^\ast$ is again a centralizing sequence in $L^1(\M)$, i.e. it lays in $L^1(\M)^\U \cap \pi[\N]'$. But by Lemma \ref{lem:Approximate}, we have that 
\begin{equation}
  \label{eq:PositiveSequence}
  \eta_\alpha = |\xi_\alpha^\ast| = (\xi_\alpha \, \xi_\alpha^\ast)^\frac12
\end{equation}
is again a sequence of centralizing unit vectors in $L^2(\M)_+$. To see that they also satisfy the trace condition, we just check that
\[
  \tau_\N(x)
  \, = \, \lim_{\alpha} \big\langle \xi_\alpha, \pi(x) \xi_\alpha \big\rangle
  \, = \, \lim_{\alpha} \tau_\M \big( \xi_\alpha \, \xi^\ast_\alpha \, \pi(x) \big)
  \, = \, \lim_{\alpha} \big\langle |\xi_\alpha^\ast|, \pi(x) \, |\xi_\alpha^\ast| \big\rangle
  \, = \, \lim_{\alpha} \big\langle \eta_\alpha, \pi(x) \, \eta_\alpha \big\rangle.
\]
 

The following proposition establishes a connection between the amenability of an embedding $\pi: \L \Gamma \into L^\infty(\Omega) \rtimes_\theta \Gamma$ and the transference results on Theorem \ref{thm:IsometricInclusion}.

\begin{theorem}
  \label{thm:Extrpolation}
  Let $\pi: \L G \to L^\infty(\Omega) \rtimes_\theta \Gamma$ be a normal $\ast$-homomorphism. The following are equivalent
  \begin{enumerate}[label={\rm \textbf{(\roman*)}}, ref={\rm (\roman*)}]
    \item \label{itm:Extrapolation.1}
    $\pi$ is amenable.
    \item \label{itm:Extrapolation.2}
    For every $1 \leq p < \infty$, there is an completely isometric map
    \[
      L^p(\L \Gamma) \xrightarrow{\quad J_p \quad} \prod_{\U} L^p(\Omega \rtimes_\theta \Gamma),
    \]
    that is also bimodular, i.e. $J_p(x \, \varphi \, y ) = \pi(x) \, J_p(\varphi) \, \pi(y)$.
  \end{enumerate}
  Furthermore if, $\pi$ is equivariant and the isometric and bimodular map $L^2(\L \Gamma) \into L^2(\Omega \rtimes_\theta \Gamma)$ of point \ref{itm:Extrapolation.1} intertwines $T_m$ and $\Id \rtimes T_m$, for every $m \in B(\Gamma)$, the Fourier-Stieltjes algebra of $\Gamma$, so does every other $L^p$ isometry $J_p$ in point \ref{itm:Extrapolation.2}.
\end{theorem}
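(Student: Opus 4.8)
The plan is to handle the equivalence \ref{itm:Extrapolation.1}$\Leftrightarrow$\ref{itm:Extrapolation.2} first and the intertwining (``furthermore'') afterwards. The implication \ref{itm:Extrapolation.2}$\Rightarrow$\ref{itm:Extrapolation.1} is immediate: specialising \ref{itm:Extrapolation.2} to $p=2$ produces an $\L\Gamma$-bimodular isometry $L^2(\L\Gamma)\to\prod_\U L^2(\Omega\rtimes_\theta\Gamma)$, which by the discussion preceding the statement (the identification of amenability of $\pi$ with ${}_{\N}L^2(\N)_{\N}\prec{}_{\pi[\N]}L^2(\M)_{\pi[\N]}$, i.e.\ with the existence of such a bimodular isometry into the ultrapower) is precisely the amenability of $\pi$.

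For \ref{itm:Extrapolation.1}$\Rightarrow$\ref{itm:Extrapolation.2} I would reproduce the transference construction of Theorem \ref{thm:IsometricInclusion} in this abstract setting. By Proposition \ref{prp:WeakContainmentTriv} together with \eqref{eq:PositiveSequence}, amenability of $\pi$ supplies a net of positive centralizing unit vectors $\xi_\alpha\in L^2(\M)_+$ with $\|\pi(x)\xi_\alpha-\xi_\alpha\pi(x)\|_2\to0$ and $\langle\xi_\alpha,\pi(x)\xi_\alpha\rangle\to\tau(x)$ for every $x\in\L\Gamma$. I then set $J_p^\alpha(x)=\xi_\alpha^{1/p}\,\pi(x)\,\xi_\alpha^{1/p}$ and verify, exactly as in the proof of Theorem \ref{thm:IsometricInclusion}, that (1) $\|J_p^\alpha\|\le1$ (trivial at $p=\infty$ since $\pi$ is a normal $\ast$-homomorphism, the case $p=1$ following from H\"older after factoring $x=yz$ through $L^2(\L\Gamma)$ and approximating by $\CC[\Gamma]$, and the general case by interpolation) and (2) $\lim_\alpha\langle J_q^\alpha(y),J_p^\alpha(x)\rangle=\langle y,x\rangle$, where the powers of $\xi_\alpha$ are collapsed using the centrality of $\xi_\alpha$ through Lemma \ref{lem:Approximate}. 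The ultraproduct $J_p=(J_p^\alpha)^\U$ is then an $\L\Gamma$-bimodular isometry; completeness of the isometry follows by running the same argument over $M_n(\L\Gamma)=M_n\otimes\L\Gamma$ with the amplified homomorphism $\Id_{M_n}\otimes\pi$ and the vectors $1_{M_n}\otimes\xi_\alpha$.

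For the ``furthermore'', observe first that since $\pi$ is equivariant and a $\ast$-homomorphism, Theorem \ref{thm:EquivariantMap}\,\ref{itm:EquivariantMap1} gives $\pi(\lambda_g)=\varphi_g\rtimes\lambda_g$ for a unitary cocycle $\varphi$, whence the algebra-level identity $(\Id\rtimes T_m)\circ\pi=\pi\circ T_m$ on $\CC[\Gamma]$, which I use throughout. By linearity of $m\mapsto T_m$ it suffices to treat $m\in B(\Gamma)$ positive-definite and normalised, $m(e)=1$; writing $m(g)=\langle\zeta,\rho(g)\zeta\rangle$ with $\rho$ a unitary representation on $K$ and $\zeta$ a unit vector, the map $\Phi:=\Id\rtimes T_m$ factors as $\Phi=(\Id\otimes\omega_{\zeta,\zeta})\circ\sigma_\rtimes$, with $\sigma_\rtimes\colon\M\to\M\weaktensor\B(K)$ the normal $\ast$-homomorphism $f\rtimes\lambda_g\mapsto(f\rtimes\lambda_g)\otimes\rho(g)$ coming from Fell absorption and $\omega_{\zeta,\zeta}$ a vector state; hence $\Phi$ is normal, unital, completely positive, and a direct check gives $\tau_\rtimes\circ\Phi=\tau_\rtimes$. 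Passing to the tracial ultrapower, the maps of \ref{itm:Extrapolation.2} read $J_p(x)=\pi(x)\,\xi^{2/p}$, where $\xi=(\xi_\alpha)^\U\in L^2(\M^\U)_+$ is a positive density central for $\pi(\L\Gamma)$ (centrality promotes $\xi^{1/p}\pi(x)\xi^{1/p}=\pi(x)\xi^{2/p}$ through Lemma \ref{lem:Approximate}), and $\Phi^\U$ stays unital, completely positive and trace-preserving. The $L^2$-intertwining hypothesis, evaluated at $x=\lambda_e$, says exactly that $\Phi^\U(\xi)=\xi$.

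Granting $\Phi^\U(\xi)=\xi$, the Kadison--Schwarz inequality for the unital completely positive $\Phi^\U$ applied to the self-adjoint $\xi$ gives $\Phi^\U(\xi^2)\ge\Phi^\U(\xi)^2=\xi^2$, while trace-preservation gives equality of traces; faithfulness forces $\Phi^\U(\xi^2)=\xi^2$, so $\xi$ lies in the multiplicative domain of $\Phi^\U$. As the multiplicative domain is a von Neumann subalgebra on which $\Phi^\U$ is a $\ast$-homomorphism, it contains $W^\ast(\xi)$; thus $\Phi^\U(\xi^{2/p})=\xi^{2/p}$ and, for every $x$, $\Phi^\U(\pi(x)\xi^{2/p})=\Phi^\U(\pi(x))\,\xi^{2/p}=\pi(T_m x)\,\xi^{2/p}$, i.e.\ $(\Id\rtimes T_m)^\U J_p=J_p\,T_m$ at every $1\le p<\infty$. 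I expect the main obstacle to be pushing the multiplicative-domain identity $\Phi^\U(ab)=\Phi^\U(a)\Phi^\U(b)$ from $b\in\M^\U$ to the unbounded density $b=\xi^{2/p}\in L^p$: I would truncate $\xi$ by its spectral projections and use the $L^p$-continuity of $\Phi^\U$ together with Lemma \ref{lem:Approximate} and Theorem \ref{thm:almostMultiplicativemap} to control the errors. The second delicate point is the choice of ultrapower so that $\xi$ is a genuine element and the trace is faithful; since the decisive computation $\|\Phi(\xi_\alpha^2)-\xi_\alpha^2\|_1\to0$ can be carried out on $\M$ itself via Kadison--Schwarz and $\tau_\rtimes$-preservation, one may in fact bypass any subtlety of the ultrapower trace.
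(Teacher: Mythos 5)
Your treatment of the equivalence \ref{itm:Extrapolation.1} $\Leftrightarrow$ \ref{itm:Extrapolation.2} is correct and coincides with the paper's: the backward implication is the $p=2$ specialization combined with the characterization of amenability as the existence of a bimodular isometry $L^2(\L \Gamma) \to L^2(\Omega \rtimes_\theta \Gamma)^\U$, and the forward implication reruns the construction of Theorem \ref{thm:IsometricInclusion} with the positive centralizing vectors supplied by Proposition \ref{prp:WeakContainmentTriv} and \eqref{eq:PositiveSequence}. Your opening reductions in the ``furthermore'' part are also correct and parallel the paper: restriction to positive definite $m$ with $m(e)=1$, the fact that $\Id \rtimes T_m$ is then normal, unital, completely positive and trace-preserving (via Fell absorption), and the observation that the $L^2$-hypothesis evaluated at $\lambda_e$ gives $(\Id \rtimes T_m)^\U(\xi) = \xi$.

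The gap is the multiplicative-domain step. Choi's theory (Kadison--Schwarz with equality, the multiplicative domain being a von Neumann subalgebra, the bimodule identity $\Phi(ab)=\Phi(a)\Phi(b)$) concerns elements of the von Neumann algebra, whereas $\xi = (\xi_\alpha)^\U$ is an unbounded positive element of the $L^2$-ultraproduct. The identity $\Phi^\U(\xi^2)=\xi^2$ does survive (extend Kadison--Schwarz to $L^2$-elements by truncation and closedness of the positive cone, then use trace-preservation and faithfulness), but ``$\xi$ lies in the multiplicative domain'', ``the multiplicative domain contains $W^\ast(\xi)$'', and above all the conclusions $\Phi^\U(\xi^{2/p})=\xi^{2/p}$ and $\Phi^\U(a\,\xi^{2/p})=\Phi^\U(a)\,\xi^{2/p}$ are not justified: nothing in Choi's theorem says that a unital completely positive map fixing an unbounded $L^2$-vector and its square must fix its spectral projections or act multiplicatively on its functional calculus. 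Your proposed repair is circular in its first half --- the truncations $\xi e_t$ by spectral projections are not known to be almost fixed by $\Phi^\U$, which is precisely what you would need to feed them into the multiplicative domain --- and in its second half it simply becomes the paper's proof: the paper reduces to $h=e$ via the modularity identity \eqref{eq:ModularityProperty} and then applies Theorem \ref{thm:almostMultiplicativemap} to $R = \Id \rtimes T_m$ (positive, subunital, $\tau \circ R \leq \tau$), which converts $\| R(\xi_\alpha) - \xi_\alpha \|_2 \to 0$ directly into $\| R(u_\alpha |\xi_\alpha|^{2/p}) - u_\alpha |\xi_\alpha|^{2/p} \|_p \to 0$; once you invoke that theorem, the Kadison--Schwarz/multiplicative-domain detour is redundant. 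If you insist on the soft ultrapower route (which, had it worked, would pleasantly dispense with the reduction to $h = e$), the missing ingredient is a noncommutative mean ergodic theorem: for a normal, unital, completely positive, trace-preserving map, the $L^2$-fixed points form $L^2$ of the fixed-point von Neumann subalgebra, the ergodic projection being the extension of a trace-preserving conditional expectation; then $\xi$ is affiliated with that subalgebra, $\xi^{2/p}$ is fixed, and the bimodule identity extends by $L^p$-continuity. That input must be proven or cited, and it additionally presupposes Raynaud's identification of $\prod_\U L^p(\Omega \rtimes_\theta \Gamma)$ with the $L^p$-space of an ultraproduct von Neumann algebra on which $\Phi^\U$ acts normally --- a point you flag but do not resolve.
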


\begin{proof}[Proof (of Theorem \ref{thm:Extrpolation})]
  For the proof of \ref{itm:Extrapolation.1} $\implies$ \ref{itm:Extrapolation.2}, we start by noticing that, since $\pi$ is amenable there is a sequence of vectors $(\xi_\alpha)_\alpha \subset L^2(\Omega \rtimes_\theta \Gamma)$ satisfying the properties of Proposition \ref{prp:WeakContainmentTriv}\ref{itm:WeakContainmentTriv.2}, that is, $\xi_\alpha$ being asymptotically central and satisfying that the vector states $\omega_{\xi_\alpha, \xi_\alpha}(x) = \langle \xi_\alpha, x \, \xi_\alpha \rangle$ converge on the image of $\pi$ to the canonical trace of $\L \Gamma$. But then, we can define the family of maps $J_p^\alpha:L^p(\L \Gamma) \to L^p(\Omega \rtimes_\theta \Gamma)$ as
  \[
    J^\alpha_p(x) = u_\alpha \, |\xi_\alpha|^\frac1{p} \, \iota(x) \, |\xi_\alpha|^\frac1{p},
  \]
  where $\xi_\alpha = u_\alpha \, |\xi_\alpha|$ is the polar decomposition of the vector $\xi_\alpha \in L^\infty(\Omega \rtimes_\theta \Gamma)$. Then, proceeding like in the proof of Theorem \ref{thm:IsometricInclusion} we can obtain easily that
  \begin{enumerate}[label={\rm \textbf{(\arabic*)}}, ref={\rm \textbf{(\arabic*)}}]
    \item \label{itm:ExtrpolationProof.1}
    $\displaystyle{\big\| J_p^\alpha: L^p(\L \Gamma) \to L^p(\Omega \rtimes_\theta \Gamma) \big\| \leq 1}.$
    \item \label{itm:ExtrpolationProof.2}
    $\displaystyle{\lim_{\alpha} \big\langle J_p^\alpha(x), J_p^\alpha(y) \big\rangle = \langle x, y \rangle}$ for every $x \in L^p(\L \Gamma)$ and $y \in L^q(\L \Gamma)$ with $\frac1{p} + \frac1{q} = 1$.
  \end{enumerate}
  Therefore, the ultraproduct map $J_p = (J_p^\alpha)^\U$ is isometric. The fact that $\xi_\alpha$ is asymptotically central in $L^2(\Omega \rtimes_\theta \Gamma)$ gives, by Lemma \ref{lem:Approximate}, that the sequences $u_\alpha \, |\xi_\alpha|^\frac1{p}$ and $|\xi_\alpha|^\frac1{p}$ are asymptotically central in $L^{2p}(\Omega \rtimes_\theta \Gamma)$. This readily gives that the map $J_p$ is $\L \Gamma$-bimodular in the sense that $J_p(x \, \varphi \, y) = \pi(x) \, J_p(\varphi) \, \pi(y)$. Now, we have to show that, if $J_2$ satisfies the intertwining identity
  \begin{equation}
    \label{eq:IntertwiningJL2}
    J_2 \circ T_m \, = \, (\Id \rtimes T_m)^\U \circ J_2,
  \end{equation}
  then, so does every other $J_p$. Notice that, since every $m \in B(\Gamma)$ is a combination of four positive type functions, we can assume that $m$ is of positive type without loss of generality. Similarly, by the continuity of the map $J_p$ over $L^p(\L \Gamma)$, we have that its is enough to prove the intertwining identity over a dense subset of $L^p(\L \Gamma)$. We will choose $\CC[\Gamma] \subset L^p(\L \Gamma)$. By linearity it is enough to prove that, for every $h \in \Gamma$, it holds that
  \begin{eqnarray*}
     0 & = & \big\| \big(J_p \circ T_m\big)(x) - (\Id \rtimes T_m)^\U \circ J_p(x) \big\|_p\\
       & = & \lim_{\alpha \to \U} 
             \Big\| u_\alpha \, |\xi_\alpha|^\frac1{p} \, \pi \big( T_m x \big) \, |\xi_\alpha|^\frac1{p} 
                    - (\Id \rtimes T_m) \big\{ u_\alpha \, |\xi_\alpha|^\frac1{p} \, \pi(x) \, |\xi_\alpha|^\frac1{p} \big\} \Big\|_p,
  \end{eqnarray*}
  where $x = \lambda_h$. Using the fact that $\pi \circ T_m = (\Id \rtimes T_m) \circ \pi$ by the fact that $\pi$ is equivariant and the the fact that $|\xi_\alpha|^\frac1{p} \in L^\frac{2}{p}(\M)$ asymptotically centralizes $\pi(x)$, we obtain that the above expression is equal to
  \begin{eqnarray*}
    \big\| \big(J_p \circ T_m \big)(x) - (\Id \rtimes T_m) \circ J_p(x) \big\|_p
       & = & \lim_{\alpha \to \U} 
             \Big\| \, m(h) \, u_\alpha \, |\xi_\alpha|^\frac{2}{p} \, \pi(\lambda_h) 
             - (\Id \rtimes T_m) \big\{ u_\alpha \, |\xi_\alpha|^\frac{2}{p} \, \pi(x) \big\} \Big\|_p.
  \end{eqnarray*}
  The following modularity property of $(\Id \rtimes T_m)$
  \begin{equation}
    \label{eq:ModularityProperty}
    (\Id \rtimes T_m)(\xi \, \pi(\lambda_h))
    \, = \,
    (\Id \rtimes T_{\rho_h m})(\xi) \, \pi(\lambda_h), 
  \end{equation}
  where $\rho_h m(g) = m(g \, h)$. Therefore, we only have to verify the claim just for $h = e$ and every positive type $m$, since $\rho_h m$ is of positive type whenever $m$ is. Similarly, by linearity, we can take $m(g) = m(e)^{-1} \, m(g)$ and assume that $m(e) = 1$. gathering all the information together, the claim follows by showing that
  \[
    \lim_{\alpha \to \U} 
    \Big\|  u_\alpha \, |\xi_\alpha|^\frac{2}{p} - (\Id \rtimes T_m) \big\{ u_\alpha \, |\xi_\alpha|^\frac{2}{p} \big\} \Big\|_p 
    \, = \,
    0,
  \]
  for every $m$ of positive type with $m(e) = 1$. Since $\Id \rtimes T_m$ is a (completely) positive operator over $L^\infty(\Omega \rtimes_\theta \Gamma)$ we can apply the Theorem \ref{thm:almostMultiplicativemap} on almost multiplicative maps to prove the result.
\end{proof}

Observe that the above proposition has to be read as an extrapolation argument, by which the isometric and bimodular map defined in the $L^2$-level
\[
  L^2(\L \Gamma) \xhookrightarrow{ \quad J \quad } L^2(\Omega \rtimes_\theta \Gamma)
\]
can be extended to the rest of the $L^p$-scale, with $1 \leq p < \infty$ as a complete isometry preserving both the $\L \Gamma$-bimodular properties as well as the intertwining properties for the operators $T_m$ and $(\Id \rtimes T_m)^\U$. This is something that is rarely possible when working with general Hilbert spaces. Indeed, by Lamperti's theorem \cite{Lamperti1958}, see \cite{Yeadon1981Lamperti} as well for a noncommutative analogue, there are many $L^2$-isometries that to not extend to $L^p$. Nevertheless, it seems that turning the $L^2$-space into a bimodule gives you enough structure to have a natural candidate to the associated $L^p$-space. Indeed, if ${}_{\N}H_\N$ is a bimodule, a $L^p$ extrapolation would be any noncommutative $L^p$-bimodule $\X$, in the sense of \cite{JungeSherman2005}, such that it will have a dense subset (its $L^2$-part) isomorphic as a bimodule to a dense subset of $H$.

Observe also that the map $\pi: \L \Gamma \into L^\infty(\Omega) \rtimes_\theta \Gamma$ intertwines $T_m$ and $\Id \rtimes T_m$ when $\pi$ is equivariant, and thus of the form described in Theorem \ref{thm:EquivariantMap}.\ref{itm:EquivariantMap1}. We will next give a complete assessment of the amenable and equivariant $\ast$-homomorphisms $\pi: \L \Gamma \to L^\infty(\Omega) \rtimes_\theta \Gamma$ as well as those amenable $\pi$ such that their associated isometric map $L^2(\L \Gamma) \into L^2(\Omega \rtimes_\theta \Gamma)^\U$ intertwine Fourier multipliers. Before that, we will say that given a $L^\infty(\Omega)$-valued symbol $(m_g)_g \in \ell^\infty[L^\infty(\Omega)] = L^\infty(\Gamma \times \Omega)$ its associated \emph{$L^\infty$-valued Fourier multiplier} its the operator $T_m$ acting on $L^\infty(\Omega) \rtimes_\theta \Gamma$ given by
\[
  T_m \bigg( \sum_{g \in \Gamma} f_g(\omega) \rtimes \lambda_g \bigg)
  \, = \,
  \sum_{g \in \Gamma} m_g(\omega) \, f_g(\omega) \rtimes \lambda_g.
\]
Observe that, like in the case of $L^\infty(\Omega)$-valued Herz-Schur multipliers, there is a bit of ambiguity in denoting both the usual Fourier multipliers $\Id \rtimes T_m$ and the $L^\infty(\Omega)$-valued ones by the $T_m$.

\begin{theorem}
  \label{thm:AmenableEquivariant}
  Let $\theta: \Gamma \to \Aut(\Omega, \mu)$ be an action, $\kappa: \Gamma \to L^\infty(\Omega;\TT)$ a multiplicative $1$-cocycle and $\pi: \L \Gamma \to L^\infty(\Omega) \rtimes_\theta \Gamma$ the associated $\ast$-homomorphism given by
  \[
    \pi \bigg( \sum_{g \in \Gamma} f_g \rtimes \lambda_g \bigg) \, = \, \sum_{g \in \Gamma} f_g \, \kappa_g \rtimes \lambda_g.
  \]
  Then, we have that
  \begin{enumerate}[label={\rm \textbf{(\roman*)}}, ref={\rm (\roman*)}]
    \item \label{itm:AmenableEquivariant.1} $\pi$ is amenable iff there is a sequence of unit vectors $(\xi_\alpha)_\alpha \subset L^2(\Omega \times \Gamma)$ such that 
    \begin{itemize}
      \item 
      $\displaystyle{
        \lim_{\alpha} \big\| \big( \theta_h \otimes \Ad_h \big)(\xi_\alpha) - M_{\varpi_h}(\xi_\alpha) \big\|_2 = 0
      }$, for every $h \in \Gamma$, where $\varphi_h \in L^\infty(\Gamma \times \Omega)$ is given by $\varpi_h(g,\omega) = \overline{\kappa}_h(\omega) \, \theta_g(\kappa_h)$ and $M_{\varpi_h}$ is its associated pointwise multiplication operator.
      \item 
      $\displaystyle{
        \int_\Omega \kappa_h \bigg( \sum_{g \in \Gamma} \xi_\alpha(g , h) \, \theta_h(\xi_\alpha(g)) \bigg) \, d \mu
        \, = \,
        \delta_{\{h = e\}}
      }$
      for every $h \in \Gamma$.
    \end{itemize} 
    \item \label{itm:AmenableEquivariant.2}
    $\pi$ is amenable and the associated bimodular isometry
    \[
      \leftidx{_{\L \Gamma}}{{L^2(\L \Gamma)}}{_{\L \Gamma}} \tto \leftidx{_{\L \Gamma}}{{L^2(\Omega \rtimes_\theta \Gamma)}}{_{\L \Gamma}}^\U
    \]
    intertwines $T_m$ and $\Id \rtimes T_m$, for every $m \in B(\Gamma)$, iff $\Omega$ has a $\Gamma$-invariant mean.
  \end{enumerate}
\end{theorem}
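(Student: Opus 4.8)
The plan is to extract both parts from Proposition~\ref{prp:WeakContainmentTriv}, working throughout with the canonical unitary identification $L^2(\Omega\rtimes_\theta\Gamma)\cong L^2(\Omega\times\Gamma)$ coming from the trace $\tau_\rtimes=\mu\circ\EE$, under which a vector becomes a family $(\xi_g)_{g\in\Gamma}\subset L^2(\Omega)$ with $\|\xi\|_2^2=\sum_g\|\xi_g\|_{L^2(\Omega)}^2$. First I would record the two module actions attached to $\pi$, namely $(\pi(\lambda_h)\xi)_k=\kappa_h\,\theta_h(\xi_{h^{-1}k})$ and $(\xi\,\pi(\lambda_h))_k=\xi_{kh^{-1}}\,\theta_{kh^{-1}}(\kappa_h)$, using the cocycle identities $\kappa_e=1$ and $\theta_h(\kappa_{h^{-1}})=\overline{\kappa_h}$ (both consequences of $\kappa_{gh}=\kappa_g\theta_g(\kappa_h)$ and $|\kappa_h|=1$). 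With these, part~\ref{itm:AmenableEquivariant.1} is a direct transcription of Proposition~\ref{prp:WeakContainmentTriv}: conjugating the centrality requirement $\|\pi(\lambda_h)\xi_\alpha-\xi_\alpha\pi(\lambda_h)\|_2\to0$ by the unitary $\pi(\lambda_h)$ and using $|\varpi_h|=1$ turns it into $\|(\theta_h\otimes\Ad_h)\xi_\alpha-M_{\varpi_h}\xi_\alpha\|_2\to0$, while the trace requirement $\langle\xi_\alpha,\pi(\lambda_h)\xi_\alpha\rangle\to\tau(\lambda_h)=\delta_{h=e}$ becomes the second displayed condition after reindexing $k=hg$.

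For the backward implication of part~\ref{itm:AmenableEquivariant.2} I would start from a $\theta$-invariant mean, choose positive unit vectors $\zeta_\alpha\in L^2(\Omega)_+$ with $\|\theta_h\zeta_\alpha-\zeta_\alpha\|_2\to0$, and set $\xi_\alpha=\zeta_\alpha\otimes\delta_e$. The only surviving component of both $\pi(\lambda_h)\xi_\alpha$ and $\xi_\alpha\pi(\lambda_h)$ sits at $k=h$, equal to $\kappa_h\theta_h(\zeta_\alpha)$ and $\zeta_\alpha\kappa_h$ respectively, so centrality reduces to $\|\kappa_h(\theta_h\zeta_\alpha-\zeta_\alpha)\|_2=\|\theta_h\zeta_\alpha-\zeta_\alpha\|_2\to0$ (the modulus-one cocycle cancels), and the trace condition is exact since $\delta_e\perp\delta_h$ for $h\neq e$; by Proposition~\ref{prp:WeakContainmentTriv}, $\pi$ is amenable. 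Moreover the associated $L^2$-isometry sends $\lambda_g$ to the ultralimit of $\zeta_\alpha^{1/2}\kappa_g\theta_g(\zeta_\alpha^{1/2})\rtimes\lambda_g$, which is supported on the single group element $g$; hence $\Id\rtimes T_m$ acts on it as multiplication by the scalar $m(g)$, matching $T_m\lambda_g=m(g)\lambda_g$, so the intertwining holds for every $m$ and \emph{a fortiori} for $m\in B(\Gamma)$.

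The forward implication is the crux. Given $\pi$ amenable with central trace-reproducing vectors $(\xi_\alpha)$, Lemma~\ref{lem:Approximate} and the $L^{2p}$-asymptotic centrality of $|\xi_\alpha|^{1/2}$ let me rewrite the $L^2$-isometry as $J(\lambda_g)=(\pi(\lambda_g)\xi_\alpha)^\U$, so that $J(\1)=(\xi_\alpha)^\U$. The decisive step is to feed the single symbol $m=\delta_e\in B(\Gamma)$ (a coefficient of the regular representation) into the intertwining hypothesis: since $T_{\delta_e}$ is the projection onto $\CC\lambda_e$, giving $T_{\delta_e}\1=\1$, and $\Id\rtimes T_{\delta_e}=\EE$ is the conditional expectation onto $L^2(\Omega)\rtimes\lambda_e$, the identity $(\Id\rtimes T_{\delta_e})^\U J(\1)=J(T_{\delta_e}\1)=J(\1)$ forces $(\xi_\alpha)^\U$ to be fixed by $\EE^\U$, that is $\lim_\alpha\|\xi_\alpha-(\xi_\alpha)_e\otimes\delta_e\|_2=0$, equivalently $\|(\xi_\alpha)_e\|_2\to1$. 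Setting $\zeta_\alpha=(\xi_\alpha)_e$ and reading off the $k=h$ component of centrality gives $\|\theta_h\zeta_\alpha-\zeta_\alpha\|_2\to0$ with $\|\zeta_\alpha\|_2\to1$; a weak-$\ast$ cluster point of the states $f\mapsto\int_\Omega f\,|\zeta_\alpha|^2\,d\mu$ on $L^\infty(\Omega)$ is then a $\theta$-invariant mean, because $\theta$-invariance of $\mu$ bounds $|m_\alpha(\theta_h f)-m_\alpha(f)|$ by $\|f\|_\infty$ times a multiple of $\|\theta_h\zeta_\alpha-\zeta_\alpha\|_2$.

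The main obstacle I anticipate is organizational rather than conceptual: keeping every cocycle twist and adjoint straight in the two module actions (the identities $\kappa_e=1$, $\theta_h(\kappa_{h^{-1}})=\overline{\kappa_h}$ are what reveal the symbol $\varpi_h$ and what makes the modulus-one factors drop out), and justifying $J(\lambda_g)=(\pi(\lambda_g)\xi_\alpha)^\U$ through the asymptotic centrality of $|\xi_\alpha|^{1/2}$. The one genuine insight is that amenability alone only controls centrality, which makes $\zeta_\alpha=(\xi_\alpha)_e$ \emph{almost invariant but possibly of vanishing norm}; it is exactly the intertwining, tested against the single symbol $\delta_e$, that collapses the $\Gamma$-direction onto $\lambda_e$ and guarantees $\|\zeta_\alpha\|_2\to1$, after which amenability degenerates into the Reiter/F{\o}lner condition on $L^2(\Omega)$ and hence into a $\theta$-invariant mean.
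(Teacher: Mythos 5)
Your proposal is correct and follows essentially the same route as the paper: part \ref{itm:AmenableEquivariant.1} by transcribing Proposition \ref{prp:WeakContainmentTriv} through the conjugation action under the identification $L^2(\Omega\rtimes_\theta\Gamma)\cong L^2(\Omega\times\Gamma)$, and part \ref{itm:AmenableEquivariant.2} by testing the intertwining against the single symbol $m=\delta_e$ so that $\Id\rtimes T_{\delta_e}=\EE$ collapses the centralizing vectors onto $L^2(\Omega)\rtimes\lambda_e$, where centrality (the cocycle cancelling since $\varpi_h(e,\cdot)=1$) degenerates into asymptotic $\theta$-invariance and hence an invariant mean. Your explicit write-up of the backward implication of \ref{itm:AmenableEquivariant.2} (vectors supported at $\lambda_e$, exact trace condition, intertwining immediate since the image vectors live over a single group element) is only sketched implicitly in the paper, but it is the same construction as in Theorem \ref{thm:IsometricInclusion}.
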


\begin{proof}
  Point \ref{itm:AmenableEquivariant.1} follows after a routine application of Proposition \ref{prp:WeakContainmentTriv}. Indeed, if $\pi$ is amenable there is a sequence of unit vectors $(\xi_\alpha)_\alpha \subset L^2(\Omega \rtimes_\theta \Gamma)$ that is centralizing, or equivalently it is asymptotically invariant by the action of $\Gamma$ given by $\xi \mapsto \pi(\lambda_h) \, \xi \, \pi(\lambda_h)^\ast$. Therefore, we have that
  \[
    \begin{split}
      (\kappa_h \rtimes \lambda_h ) \, \bigg( \sum_{g \in \Gamma} & \, \xi_\alpha(g) \rtimes \lambda_g \bigg) \, (\theta_{h^{-1}}(\overline{\kappa}_h) \rtimes \lambda_{h^{-1}} )\\
      & = \, \sum_{g \in \Gamma} \kappa_h \, \theta_h(\xi_\alpha(g)) \, \theta_{h g h^{-1}}(\overline{\kappa}_h) \rtimes \lambda_{h \, g h^{-1}}  \\
      & = \, \sum_{g \in \Gamma} \kappa_h \, \theta_h(\xi_\alpha(h^{-1} g \, h)) \, \theta_{g}(\overline{\kappa}_h) \rtimes \lambda_{g}
      \, = \, \sum_{g \in \Gamma} \overline{\varpi_h(g, \omega)} \, \theta_h(\xi_\alpha(h^{-1} g \, h))(\omega) \rtimes \lambda_{g},
    \end{split}
  \]
  where $\xi_\alpha = \sum_{g} \xi_\alpha(g) \rtimes \lambda_g$ and each of the coefficients $\xi_\alpha(g)$ is uniquely determined by $\EE[\xi_\alpha \, \lambda_g^\ast]$. Using the Plancherel identity and the fact that $\mu$ is $\theta$-invariant, we have that $L^2(\Omega \rtimes_\theta \Gamma) \cong L^2(\Omega \times \Gamma)$, where the isomorphism is given by sending $\xi_\alpha(g) \rtimes \lambda_g$ to $\xi_\alpha(g) \otimes \delta_g \in L^2(\Omega) \otimes_2 \ell^2(\Gamma)$. Therefore, the above expression is at diminish distance of $\xi_\alpha$ iff the net $(\xi_\alpha)_\alpha \subset L^2(\Omega \times \Gamma)$ satisfies that
  \[
    \lim_{\alpha} \big\| \big(\theta_h \otimes \Ad_h \big) \xi_\alpha - M_{\varpi_h} \xi_\alpha \big\|_2 = 0,
  \]
  or every $h \in \Gamma$. The second condition follows by imposing that $\tau( \xi_\alpha^\ast \pi(\lambda_h) \, \xi_\alpha) = \delta_{\{h = e\}}$.
  
  For point \ref{itm:AmenableEquivariant.2} we need to see that if $J_2^\alpha(x) = \xi_\alpha \, \pi(x)$ intertwines $\Id \rtimes T_m$ and $T_m$, then without loss of generality we can assume that $\xi_\alpha \in L^2(\Omega) \otimes_2 \CC \1 \subset L^2(\Omega \rtimes \Gamma)$. Arguing like in the proof of \ref{thm:Extrpolation} we have that $J_2 \circ T_m = (\Id \rtimes T_m) \circ J_2$ iff for every $h \in \Gamma$
  \begin{eqnarray*}
    0 \, = \, 
    \lim_{\alpha \to \U} \Big\| \xi_\alpha \, \pi \big( T_m \lambda_h \big) - (\Id \rtimes T_m)(\xi_\alpha \, \pi(\lambda_h) \Big\|_2
      & = & \lim_{\alpha \to \U} \Big\| \xi_\alpha \, \pi \big( T_m \lambda_h \big) - (\Id \rtimes T_m)\big(\xi_\alpha \, \pi(\lambda_h) \big)\Big\|_2\\
      & = & \lim_{\alpha \to \U} \Big\| m(h) \, \xi_\alpha - (\Id \rtimes T_{\rho_h m})(\xi_\alpha) \Big\|_2.
  \end{eqnarray*}
  We have used the fact that $\pi$ in equivariant, the modularity property \eqref{eq:ModularityProperty} and the fact that the $L^2$-norm is invariant by multiplication by the unitary $\pi(\lambda_h) = \kappa_h \rtimes \lambda_h$. But now, taking $m = \delta_e$ implies that the sequences $\xi_\alpha$ and $T_{\delta_e}(\xi_\alpha) = \xi_\alpha(e) \rtimes \lambda_e \in L^2(\Omega) \subset L^2(\Omega \rtimes_\theta \Gamma)$ induce the same element in the ulptrapower and therefore we can assume without loss of generality that $\xi_\alpha$ is equal to $\xi_\alpha(e) \rtimes \lambda_e$. The condition \ref{itm:AmenableEquivariant.1} over vectors in $L^2(\Omega) \subset L^2(\Omega \rtimes_\theta \Gamma)$ implies that the vectors are asymptotically $\Gamma$ invariant and therefore $\Omega$ has a $\Gamma$-invariant mean. 
\end{proof}

\begin{remark} \normalfont
  Several remarks are in order. The first is that in point \ref{itm:AmenableEquivariant.1} the second condition is superfluous whenever $\Gamma$ is i.c.c, see Remark \ref{rmk:FactorCase}.
  The next is that the theorem above has to be understood as a negative result stating that the transference technique used in Theorem \ref{thm:IsometricInclusion} can not be applied in more general context by changing the natural inclusion $\L \Gamma \subset L^\infty(\Omega) \rtimes_\theta \Gamma$ by a more general injective, normal and equivariant $\ast$-homomorphism $\pi: \L \Gamma \into L^\infty(\Omega) \rtimes_\theta \Gamma$. Indeed, both Proposition \ref{thm:Extrpolation} and Theorem \ref{thm:AmenableEquivariant} above imply that such technique requires the existence of a $\Gamma$-invariant mean on $\Omega$.
  Nevertheless, it is possible that, provided there are actions without a $\Gamma$-invariant mean such that the condition \ref{itm:AmenableEquivariant.1} holds, that the map $J_2$ may intertwine a subfamily of multiplies $m$. In particular, assume that there is an infinite conjugacy class $\C \subset \Gamma$ such that all of the vectors $\xi_\alpha$ on \ref{itm:AmenableEquivariant.1} are supported in $L^2(\Omega \times \C)$ and assume that $\Gamma_0 \subset \Gamma$ is a proper subgroup generated by $\C \subset \Gamma$. Then, $J_2$ intertwines every left $\Gamma_0$-invariant multiplier $m \in \B(\Gamma)$ and by the techniques on \ref{thm:Extrpolation} $J_p$ intertwines the same family of multipliers at the same time. Assuming that Zimmer-amenable actions satisfying \ref{itm:AmenableEquivariant.1} could be found, that would gives families of multipliers for which the Schur and Fourier norms are equal, thus obtaining a positive solution of a weakening of Problem \ref{prb:TrnsferenceActions}.
\end{remark}

\section{Transference and amenable equivariant cp maps} \label{sct:AmenableCP}
In this last section we will explore in which situation the completely positive and equivariant maps of Theorem \ref{thm:EquivariantMap}.\ref{itm:EquivariantMap2} $\Phi: \L \Gamma \to L^\infty(\Omega) \rtimes_\theta \Gamma$ given by extension of
\[
  \lambda_g \xmapsto{\quad \Phi \quad} \varphi_g \rtimes \lambda_g,
\]
where $\varphi_g(\omega) = \langle \xi, \kappa_g(\omega) \xi \rangle$ is given by the matrix coefficients of a multiplicative $1$-cocycle $\kappa: \Gamma \to L^\infty(\Omega; \U(H))$ for some Hilbert space $H$. We will also denote by $\pi$ the normal $\ast$-homomorphism
\[
  \L \Gamma \xrightarrow{\quad \pi \quad}
  \B(H) \weaktensor \big( L^\infty(\Omega) \rtimes_\theta \Gamma \big)
  \cong L^\infty(\Omega;\B(H)) \rtimes_{\theta \otimes \Id} \Gamma
\]
given by linear extension of $\pi(\lambda_g) = \kappa_g(\omega) \rtimes \lambda_g$.

We introduce the following amenability condition for multiplicative $1$-cocycles.

\begin{definition}
  \label{def:Amenable1Cocycles}
  Let $\theta: \Gamma \to \Aut(\Omega, \mu)$ be an action and $\kappa: \Gamma \to  L^\infty(\Omega;\U(H))$ a multiplicative $1$-cocyle for some Hilbert space $H$. We will say that $\kappa$ is \emph{amenable} iff either of the following equivalent conditions hold
  \begin{enumerate}[label={\rm \textbf{(\roman*)}}, ref={\rm (\roman*)}]
    \item \label{itm:Amenable1Cocycles.1} 
    There is a (not necessarily normal) state $m:L^\infty(\Omega;\B(H)) \to \CC$ such that
    \[
      m \big( (\theta_h \otimes \Id) x \big) = m \big( \Ad_{\kappa_{h^{-1}}} x \big),
    \]
    for every $x \in L^\infty(\Omega;\B(H))$ and $h \in \Gamma$.
    \item \label{itm:Amenable1Cocycles.2} 
    There is a net of unit vectors $(\xi_\alpha)_\alpha \subset L^2(\Omega;S^2(H))$, 
    where $S^2(H)$ are the Hilbert-Schmidt operators on $H$, such that
    \[
      \lim_{\alpha} \big\| \Ad_{\kappa_h} \circ (\theta_h \otimes \Id)(\xi_\alpha) - \xi_\alpha \big\|_2 = 0.
    \]
  \end{enumerate}   
\end{definition}

\begin{remark} \normalfont
  Observe that, by doing the change of variable $x \mapsto \Ad_{\kappa_{h^{-1}}}^{-1} x$ we obtain that the state $m$ on point \ref{itm:Amenable1Cocycles.1} is invariant under the action $x \mapsto (\theta_h \otimes \Id) \circ \Ad_{\kappa_{h^{-1}}}^{-1} x$. But such action of $\Gamma$ is given precisely by
  \[
    (\theta_h \otimes \Id) \circ \Ad_{\kappa_{h^{-1}}}^{-1} = \Ad_{\kappa_h} \circ (\theta_h \otimes \Id).
  \]
  Let us denote by $\theta \rtimes \kappa: \Gamma \to \Aut(L^\infty(\Omega;\, \B(H)))$ the action above. Clearly, it satisfies that its restriction to $L^\infty(\Omega)$ recovers $\theta$. Therefore, we can think of it as an extended action. Condition \ref{itm:Amenable1Cocycles.1} is thus equivalent to the existence of a $\theta \rtimes \kappa$-invariant state. 
\end{remark}

The equivalence between the two notions above follows easily. The fact that \ref{itm:AmenableEquivariant.2} implies \ref{itm:Amenable1Cocycles.1} follows by an application of Banach-Alaouglu theorem. Take the vector states $\varphi_\alpha(x) = \omega_{\xi_\alpha, \xi_\alpha}(x) = \langle \xi_\alpha, x \xi_\alpha \rangle$, we have that any weak-$\ast$ accumulation point is an invariant state $m$. For the reverse implication we need to use Goldstine's theorem. Indeed, given a state $m \in \Ball(L^1(\Omega; \, S^1(H))^{\ast \ast})$ we can approximate by states in $\omega_\alpha = L^1(\Omega; \, S^1(H))$ in the weak-$\ast$ topology. But, taking square roots gives elements in $\xi_\alpha \in L^2(\Omega; S^2)$. A routine application of Hanh-Banach theorem, analogous to that on \cite[Theorem 2.5.11]{BroO2008} gives that the $\xi_\alpha$ satisfy the condition in \ref{itm:Amenable1Cocycles.2}.

We can describe explicitly the correspondences $H(\Phi)$ and $H(\Phi) \, \weaktensor_{L^\infty(\Omega) \rtimes \Gamma} \, H(\Phi)$ as follows.
\begin{proposition}
  \label{prp:CorrepondencesCP}
  Let $\Phi: \L \Gamma \to L^\infty(\Omega) \rtimes_\theta \Gamma$ be an equivariant cp map and let $\kappa: \Gamma \to L^\infty(\Omega; \U(H))$ 
  be its associated multiplicative $1$-cocycle. We have that
  \begin{enumerate}[label={\rm \textbf{(\roman*)}}, ref={\rm (\roman*)}]
    \item \label{itm:CorrespondencesCP.1}
    There is bimodular isomorphism
    \[
      \leftidx{_{\L \Gamma}}{{H(\Phi)}}_{L^\infty(\Omega) \rtimes \Gamma} 
      \, \cong \, 
      \leftidx{_{\L \Gamma}}{{ \big( \, H \otimes_2 L^2(\Omega \rtimes_\theta \Gamma) \, \big)}}_{L^\infty(\Omega) \rtimes \Gamma},
    \]
    where the left and right actions are given by
    \begin{eqnarray*}
      \lambda_h \cdot \xi & = & \pi(\lambda_h) \, \xi \\
      \xi \cdot (f \rtimes \lambda_h) & = & \xi \, (\1 \otimes f \rtimes \lambda_h) 
    \end{eqnarray*}
    respectively.
    \item \label{itm:CorrespondencesCP.2}
    There is a bimodular isomorphism 
    \[
      \leftidx{_{\L \Gamma}}{{ \big( \, H(\Phi) \, \weaktensor_{L^\infty(\Omega) \rtimes \Gamma} \, \overline{H(\Phi)} \, \big)}}{_{\L \Gamma}}
      \, \cong \, 
      \leftidx{_{\L \Gamma}}{{ \big( \, S^2(H) \otimes_2 L^2(\Omega \rtimes_\theta \Gamma) \, \big)}}{_{\L \Gamma}}, 
    \]
    where both the left and right actions are given by the image of $\pi$, 
    i.e. $\lambda_h \cdot \xi \cdot \lambda_g = \pi(\lambda_h) \, \xi \, \pi(\lambda_g)$.  
  \end{enumerate}
\end{proposition}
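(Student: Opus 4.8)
The plan is to realise both correspondences explicitly through the Stinespring dilation produced in Theorem~\ref{thm:EquivariantMap}~\ref{itm:EquivariantMap2}. Write $\M=L^\infty(\Omega)\rtimes_\theta\Gamma$ and $\widetilde{\M}=\B(H)\weaktensor\M$, and recall that $\Phi=V^\ast\,\pi(\cdot)\,V$, where $\pi\colon\L\Gamma\to\widetilde{\M}$ is the $\ast$-homomorphism $\lambda_g\mapsto\kappa_g\rtimes\lambda_g$ and $V\colon L^2(\M)\to H\otimes_2 L^2(\M)$ is the isometry $\eta\mapsto\xi\otimes\eta$. For \ref{itm:CorrespondencesCP.1} I would define $W\colon H(\Phi)\to H\otimes_2 L^2(\M)$ on the dense subspace $\L\Gamma\algtensor\mathfrak N_{\tau_\rtimes}$ by $W(x\otimes y)=\pi(x)\,V y=\pi(x)(\xi\otimes y)$. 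The Stinespring identity makes $W$ isometric at once, since
\[
  \big\langle \pi(x_1)Vy_1,\pi(x_2)Vy_2\big\rangle
  =\big\langle y_1,V^\ast\pi(x_1^\ast x_2)V\,y_2\big\rangle
  =\tau_\rtimes\big(y_1^\ast\,\Phi(x_1^\ast x_2)\,y_2\big),
\]
which is exactly $\langle x_1\otimes y_1,x_2\otimes y_2\rangle_\Phi$. Bimodularity is then immediate: $W(\lambda_h x\otimes y)=\pi(\lambda_h)\,W(x\otimes y)$ gives the left action $\lambda_h\cdot\zeta=\pi(\lambda_h)\zeta$, while $\pi(x)$ commutes with the right action of $\M$ on $H\otimes_2 L^2(\M)$, giving the right action $\zeta\cdot(f\rtimes\lambda_h)=\zeta\,(\1\otimes f\rtimes\lambda_h)$. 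The only point requiring the specific cocycle is surjectivity, which follows from the cyclicity built into the construction of $H$ in Theorem~\ref{thm:EquivariantMap}~\ref{itm:EquivariantMap2}: the vectors $\kappa_g(\omega)\xi$ generate $H$ for almost every $\omega$, whence $\overline{\pi(\L\Gamma)\,(\xi\otimes L^2(\M))}=H\otimes_2 L^2(\M)$.

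For \ref{itm:CorrespondencesCP.2} I would first transport the problem to $H\otimes_2 L^2(\M)$ using \ref{itm:CorrespondencesCP.1}, and then read the fusion off a corner identification. Let $e_\xi\in\B(H)$ be the rank-one projection onto $\CC\xi$ and set $p=e_\xi\otimes\1\in\widetilde{\M}$. Right multiplication by $p$ identifies $H\otimes_2 L^2(\M)$ with $L^2(\widetilde{\M})\,p$ as a left $\widetilde{\M}$-module, under which the right $\M$-action corresponds to the action of the corner $p\widetilde{\M} p=\CC e_\xi\otimes\M\cong\M$; in particular the left $\L\Gamma$-action is, throughout, the one induced by $\pi$. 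Hence
\[
  H(\Phi)\weaktensor_{\M}\overline{H(\Phi)}
  \;\cong\;
  L^2(\widetilde{\M})\,p\;\weaktensor_{p\widetilde{\M} p}\;p\,L^2(\widetilde{\M}),
\]
where $\overline{H(\Phi)}\cong p\,L^2(\widetilde{\M})$ through the adjoint $\bar v\mapsto v^\ast$. Since $\B(H)$ is a factor, the central support of $p$ in $\widetilde{\M}$ is $\1$, and the standard corner-fusion (Morita) identity gives $L^2(\widetilde{\M})p\weaktensor_{p\widetilde{\M} p}pL^2(\widetilde{\M})\cong L^2(\widetilde{\M})$ as a $\widetilde{\M}$-bimodule. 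Because $L^2(\widetilde{\M})=S^2(H)\otimes_2 L^2(\M)$ and both $\widetilde{\M}$-actions restrict along $\pi$, this is precisely the asserted bimodule with $\lambda_h\cdot\zeta\cdot\lambda_g=\pi(\lambda_h)\,\zeta\,\pi(\lambda_g)$.

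As a concrete check complementing the structural argument, I would verify directly that
\[
  U\colon (\zeta\otimes a)\otimes\overline{(\zeta'\otimes a')}\longmapsto |\zeta\rangle\langle\zeta'|\otimes a\,a'^\ast
\]
is a well-defined isometry $H(\Phi)\weaktensor_{\M}\overline{H(\Phi)}\to S^2(H)\otimes_2 L^2(\M)$ on left-bounded vectors $\zeta\otimes a$ with $a\in\M\cap L^2(\M)$. Indeed the $\M$-valued inner product is $\langle\zeta\otimes a,\eta\otimes b\rangle_\M=\langle\zeta,\eta\rangle_H\,a^\ast b$, so \eqref{eq:InnerprodConnesFusion} yields $\langle\zeta,\eta\rangle_H\langle\eta',\zeta'\rangle_H\,\tau_\rtimes(a^\ast bb'^\ast a')$ for the fusion inner product, while the inner product in $S^2(H)\otimes_2 L^2(\M)$ gives $\langle\zeta,\eta\rangle_H\langle\eta',\zeta'\rangle_H\,\tau_\rtimes(a'a^\ast bb'^\ast)$; these coincide by traciality of $\tau_\rtimes$. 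Surjectivity is clear since the rank-one operators span $S^2(H)$ and the products $a\,a'^\ast$ are dense in $L^2(\M)$.

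I expect the main obstacle to lie in the bimodularity bookkeeping of \ref{itm:CorrespondencesCP.2}. Checking isometry of $U$ is a clean consequence of traciality, but matching the two $\L\Gamma$-actions on the fusion with honest left and right multiplication by $\pi(\lambda_h)=\kappa_h\rtimes\lambda_h$ on $L^2(\widetilde{\M})$ forces one to track how the conjugation defining $\overline{H(\Phi)}$ and the adjoints in the $\M$-valued inner product interact with the cocycle identity $\kappa_{gh}=\kappa_g\,\theta_g(\kappa_h)$. The corner/Morita route is meant to sidestep exactly this computation by making the full $\widetilde{\M}$-bimodule structure visible from the start; the price is the need to justify that the standard corner-fusion isomorphism respects the restricted $\L\Gamma$-actions.
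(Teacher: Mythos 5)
Your proof is correct, and it takes a genuinely more constructive route than the paper's. For \ref{itm:CorrespondencesCP.1} the paper does not build the unitary by hand: it invokes uniqueness (up to isomorphism) of the GNS pair attached to a cp map, adapted to the semifinite target --- since $\tau_\rtimes(\1)=\infty$ the bi-cyclic vector $\1\otimes\1$ is unavailable, so the paper replaces it by an increasing net $\xi_\alpha=\xi\otimes p_\alpha$, $p_\alpha=\1_{F_\alpha}\rtimes\lambda_e$ finite projections, and checks that recovering $\Phi(x)$ as the strong limit of $L_{\xi_\alpha}^\ast\, x\, L_{\xi_\alpha}$ characterizes $H(\Phi)$. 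Your map $W(x\otimes y)=\pi(x)(\xi\otimes y)$, built from the Stinespring dilation $\Phi=V^\ast\pi(\cdot)V$ of Theorem \ref{thm:EquivariantMap} \ref{itm:EquivariantMap2}, sidesteps that net entirely, and your isometry and bimodularity computations are exactly right. For \ref{itm:CorrespondencesCP.2} the paper only says the isomorphism ``follows immediately'', the intended computation being the one it performs earlier in the $\ast$-homomorphism case, i.e.\ essentially your ``concrete check'' with $\M$-valued inner products and traciality of $\tau_\rtimes$; your corner/Morita argument with $p=e_\xi\otimes\1$ in $\widetilde{\M}=\B(H)\weaktensor\M$ is a genuine alternative that packages all the bimodularity bookkeeping into the standard identity $L^2(\widetilde{\M})p\,\weaktensor_{p\widetilde{\M}p}\,pL^2(\widetilde{\M})\cong L^2(\widetilde{\M})$ for $c(p)=\1$, and in fact your $U$ is precisely the corner-fusion map $vp\otimes pw^\ast\mapsto vpw^\ast$, so the two arguments confirm each other. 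One caveat, which affects your proof and the paper's sketch in exactly the same way: surjectivity of $W$ (equivalently, bi-cyclicity of the net $\xi\otimes p_\alpha$ in the paper's version) needs the vectors $\{\kappa_g(\omega)\xi\}_{g\in\Gamma}$ to be total in $H$ for almost every $\omega$, i.e.\ that $(H,\kappa,\xi)$ is the \emph{minimal} dilation; the construction in Theorem \ref{thm:EquivariantMap} \ref{itm:EquivariantMap2} only realizes the module $\X$ as a \emph{complemented} submodule of $L^\infty(\Omega;H)$ via Junge--Sherman, so minimality is an implicit normalization rather than automatic. You at least state this hypothesis where it is used, which the paper does not.
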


We will just sketch the proof of the two isomorphisms above. For the first one we will use the fact that the GNS construction for cp maps that associates to them the correspondence $H(\Phi)$ is unique under isomorphisms, see \cite[Chapter 13]{AnaPopa2017II1}. Indeed, if $\N$ and $\M$ are finite von Neumann algebras, given any contractive and completely positive map $\Phi: \N \to \M$, we can associate a pair $(H(\Phi), \xi_0)$, where $_\N H(\Phi)_\M$ is the correspondence described after Proposition \ref{prp:WeakContainmentTriv} and $\xi$ is the bi-cyclic left-bounded vector associated to the class of $\1 \otimes \1$. In this case we can recover $\Phi$ by
\[
  \Phi(x) = L_\xi^\ast \, x \, L_\xi.
\]
We have that in the finite case such construction is unique up to isomorphisms. The subtlety in our case, in which $\M$ is just semifinite, is that $\1 \otimes \1$ doesn't belong to $H(\Phi)$ since $\tau_\M(\1) = \infty$. Nevertheless, it is possible to choose an increasing net of vectors $(\xi_\alpha)_\alpha \subset \M$ of finite trace, whose associated operators $L_\xi: L^2(\M) \to H$ are uniformly bounded and such that $\Phi(x)$ is the strong limit of  $L_{\xi_\alpha}^\ast \, x \, L_{\xi_\alpha}$. Indeed, it is enough to fix $\xi_\alpha$ as the class of $\1 \otimes p_\alpha$, where $p_\alpha$ is an increasing net of finite projections converging in the strong topology to $\1$. It is again true that such property characterizes $H(\Phi)$ as a correspondence. Taking $\xi_\alpha = \xi \otimes p_\alpha$, where $\xi \in H$ satisfies that $\varphi_g = \langle \xi, \kappa_g \, \xi \rangle$ and $p_\alpha = \1_{F_\alpha} \rtimes \lambda_e$ is an increasing net of finite projections where $F_\alpha \subset \Omega$ and $\mu(F_\alpha) < \infty$, gives the desired result. The second isomorphism follows immediately. 

\begin{theorem}
  \label{thm:Amenable1CocyclesTrans}
  Let $\theta: \Gamma \to \Aut(\Omega, \mu)$ be an action $\Phi: \L \Gamma \to L^\infty(\Omega) \rtimes_\theta \Gamma$ an equivariant cp map 
  and $\kappa: \Gamma \to L^\infty(\Omega; \U(H))$ its associated $1$-cocycle.
  \begin{enumerate}[label={\rm \textbf{(\roman*)}}, ref={\rm (\roman*)}]
    \item \label{itm:Amenable1CocyclesTrans.1}
    $\Phi$ is (left) amenable iff there is a sequence of unit vectors 
    $(\xi_\alpha)_\alpha \subset S^2(H) \otimes_2 L^2(\Omega \times \Gamma)$ satisfying that
    \begin{itemize}
      \item If $\Pi: L^2(\Omega; \, S^2(H)) \otimes_2 \ell^2(\Gamma) \to L^2(\Omega; \, S^2(H)) \otimes_2 \ell^2(\Gamma)$ is the operator defined by
      \[
        \Pi(\varphi \otimes \delta_g)
        \, = \,
        \kappa_h^\ast \, \varphi \, \theta_{g}(\kappa_h) \otimes \delta_g,
      \]
      then
      \[
        \lim_\alpha \big\| (\Id_{S^2} \otimes \theta_h \otimes \Ad_{h})(\xi_\alpha) - \Pi(\xi_\alpha) \big\|_2 \, = \, 0,
      \]
      for every $h \in \Gamma$.
      \item It holds that
      \[
        \sum_{g \in \Gamma} \int_\Omega \Tr \big\{ \xi_\alpha(h \, g)^\ast \, \kappa_h \, \theta_h(\xi_\alpha(k)) \big\} \, d \mu = \delta_{\{h = e\}},
      \]
      for every $h \in \Gamma$.
    \end{itemize}
    \item \label{itm:Amenable1CocyclesTrans.2}
    $\Phi$ is (left) amenable and the associated $\L \Gamma$-bimodular isometry 
    \[
      L^2(\L \Gamma) \xrightarrow{\quad J_2 \quad} \prod_{\U} S^2(H) \otimes_2 L^2(\Omega \rtimes_\theta \Gamma)
    \]
    intertwines $T_m$ and $\Id_{S^2} \otimes \Id_{\Omega} \rtimes T_m$, for every $m \in B(\Gamma)$,
    iff $\kappa$ is amenable in the sense of Definition \ref{def:Amenable1Cocycles}.
  \end{enumerate}
\end{theorem}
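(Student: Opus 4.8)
The plan is to read both parts off the weak-containment dictionary: Proposition \ref{prp:WeakContainmentTriv} applied to the concrete model of the fusion bimodule furnished by Proposition \ref{prp:CorrepondencesCP}\ref{itm:CorrespondencesCP.2}, together with the extrapolation machinery of Theorem \ref{thm:Extrpolation} imported through the $\ast$-homomorphism $\pi$. Throughout I regard the relevant target algebra as $\M' = L^\infty(\Omega;\B(H)) \rtimes_{\theta \otimes \Id} \Gamma$, whose $L^2$-space is exactly $S^2(H) \otimes_2 L^2(\Omega \rtimes_\theta \Gamma)$, so that the proof of Theorem \ref{thm:Extrpolation} applies verbatim with $L^\infty(\Omega) \rtimes_\theta \Gamma$ replaced by $\M'$.

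For \ref{itm:Amenable1CocyclesTrans.1}, recall that $\Phi$ is left amenable exactly when $L^2(\L \Gamma) \prec H(\Phi) \weaktensor_{L^\infty(\Omega) \rtimes \Gamma} \overline{H(\Phi)}$, and by Proposition \ref{prp:CorrepondencesCP}\ref{itm:CorrespondencesCP.2} the right-hand bimodule is $S^2(H) \otimes_2 L^2(\Omega \rtimes_\theta \Gamma)$ with both actions implemented by $\pi$. Proposition \ref{prp:WeakContainmentTriv} then converts this into the existence of a net of unit vectors $(\xi_\alpha)_\alpha$ that is asymptotically central for the conjugation $\xi \mapsto \pi(\lambda_h) \, \xi \, \pi(\lambda_h)^\ast$ and whose vector states converge to $\tau$ on $\pi[\L \Gamma]$. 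Writing $\xi_\alpha = \sum_g \xi_\alpha(g) \rtimes \lambda_g$ under the Plancherel identification $L^2(\Omega \rtimes_\theta \Gamma) \cong L^2(\Omega \times \Gamma)$, I would compute $\pi(\lambda_h) \, \xi_\alpha \, \pi(\lambda_h)^\ast$ term by term; using the cocycle relation $\kappa_{h^{-1}} = \theta_{h^{-1}}(\kappa_h^\ast)$ and reindexing $g \mapsto h g h^{-1}$, the $\lambda_k$-coefficient becomes $\kappa_h \, (\theta_h \otimes \Id)(\xi_\alpha(h^{-1} k h)) \, \theta_k(\kappa_h^\ast)$. The conjugation by $h$ produces the $\Ad_h$ on $\ell^2(\Gamma)$, the action $\theta_h$ lands on $L^2(\Omega)$, and the residual $\kappa$-factors assemble into $\Pi$; the two descriptions differ only by the pointwise $S^2$-isometry $X \mapsto \kappa_h X \, \theta_k(\kappa_h^\ast)$, so the centrality condition is equivalent to $\lim_\alpha \| (\Id_{S^2} \otimes \theta_h \otimes \Ad_h)(\xi_\alpha) - \Pi(\xi_\alpha) \|_2 = 0$, and the vector-state condition is the stated trace identity. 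This bookkeeping with the cocycle and the conjugation is the only genuinely computational step and the place I expect the main risk of index errors.

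For the reverse implication in \ref{itm:Amenable1CocyclesTrans.2}, suppose $\kappa$ is amenable and choose unit vectors $\eta_\alpha \in L^2(\Omega; S^2(H))$ with $\| \Ad_{\kappa_h} (\theta_h \otimes \Id) \eta_\alpha - \eta_\alpha \|_2 \to 0$. I would set $\xi_\alpha = \eta_\alpha \rtimes \lambda_e$, supported at $e$. Since $\Ad_h$ fixes $\delta_e$ and $\theta_e = \Id$, only the $k = e$ term survives in \ref{itm:Amenable1CocyclesTrans.1}, and after applying the $S^2$-isometry $\Ad_{\kappa_h}$ it collapses to precisely $\| \Ad_{\kappa_h} (\theta_h \otimes \Id) \eta_\alpha - \eta_\alpha \|_2 \to 0$, i.e. Definition \ref{def:Amenable1Cocycles}\ref{itm:Amenable1Cocycles.2}; the trace condition is automatic because only the term $g = h = e$ contributes, giving $\| \eta_\alpha \|_2^2 = 1$. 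Hence $\Phi$ is amenable, with $L^2$ isometry $J_2(x) = (\xi_\alpha \, \pi(x))^\U$. To see it intertwines, I would run the reduction from the proof of Theorem \ref{thm:Extrpolation}: via the modularity identity \eqref{eq:ModularityProperty} and reduction to $m$ of positive type with $m(e) = 1$, the intertwining amounts to $\| \xi_\alpha - (\Id \rtimes T_m) \xi_\alpha \|_2 \to 0$; but $(\Id \rtimes T_m)(\eta_\alpha \rtimes \lambda_e) = m(e) \, \eta_\alpha \rtimes \lambda_e = \xi_\alpha$, so this is identically zero. The ``furthermore'' part of Theorem \ref{thm:Extrpolation}, applied to $\pi$, then propagates the intertwining to every $J_p$.

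For the forward implication, assume $\Phi$ is amenable with witnessing vectors whose $J_2$ intertwines $T_m$ and $\Id \rtimes T_m$ for all $m \in B(\Gamma)$. Since $\delta_e$ is of positive type, $\delta_e \in B(\Gamma)$, and the intertwining for $m = \delta_e$ forces $J_2(\1) = (\xi_\alpha)^\U$ to be fixed by $(\Id \rtimes T_{\delta_e})^\U$, the projection onto the $\lambda_e$-component; thus $(\xi_\alpha)^\U = (\xi_\alpha(e) \rtimes \lambda_e)^\U$ and we may assume $\xi_\alpha = \eta_\alpha \rtimes \lambda_e$ with $\eta_\alpha \in L^2(\Omega; S^2(H))$. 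Specializing the centrality condition of \ref{itm:Amenable1CocyclesTrans.1} to these $e$-supported vectors returns exactly Definition \ref{def:Amenable1Cocycles}\ref{itm:Amenable1Cocycles.2}, so $\kappa$ is amenable. The conceptual content here is the localization-at-$e$ trick already used in Theorem \ref{thm:AmenableEquivariant}\ref{itm:AmenableEquivariant.2}; once it is in place both implications of \ref{itm:Amenable1CocyclesTrans.2} follow at once from \ref{itm:Amenable1CocyclesTrans.1}.
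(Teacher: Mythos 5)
Your proposal is correct and takes essentially the same approach as the paper: the paper omits this proof entirely, saying only that it is exactly like that of Theorem \ref{thm:AmenableEquivariant}, and your argument is precisely that adaptation --- Proposition \ref{prp:WeakContainmentTriv} applied to the fusion bimodule of Proposition \ref{prp:CorrepondencesCP}\ref{itm:CorrespondencesCP.2} for part \ref{itm:Amenable1CocyclesTrans.1}, and the localization-at-$e$ trick with $m = \delta_e$ together with the reduction from the proof of Theorem \ref{thm:Extrpolation} for part \ref{itm:Amenable1CocyclesTrans.2}. Your filled-in computations also check out, in particular the $\lambda_k$-coefficient $\kappa_h \, (\theta_h \otimes \Id)(\xi_\alpha(h^{-1} k h)) \, \theta_k(\kappa_h^\ast)$ of $\pi(\lambda_h) \, \xi_\alpha \, \pi(\lambda_h)^\ast$ and the observation that, for $e$-supported vectors, the cocycle does not cancel (unlike the $\TT$-valued case of Theorem \ref{thm:AmenableEquivariant}) but instead yields exactly Definition \ref{def:Amenable1Cocycles}\ref{itm:Amenable1Cocycles.2}.
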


The proof of the theorem above is exactly like that of Theorem \ref{thm:AmenableEquivariant} land therefore we will omit it. 
The following corollary follows from point \ref{itm:Amenable1CocyclesTrans.2} and the proof of Theorem \ref{thm:Extrpolation}.

\begin{corollary}
  \label{cor:Amenable1CocycleLp}
  Let $\theta: \Gamma \to \Aut(\Omega, \mu)$ be an action $\Phi: \L \Gamma \to L^\infty(\Omega) \rtimes_\theta \Gamma$ an equivariant cp map 
  and $\kappa: \Gamma \to L^\infty(\Omega; \U(H))$ its associated $1$-cocycle. If $\kappa$ is amenable, then 
  for every $1 \leq p < \infty$, there is a complete isometry
  \[
    L^p(\L \Gamma) \xrightarrow{\quad J_p \quad} \prod_{\U} S^p(H) \otimes_p L^p(\Omega \rtimes_\theta \Gamma),
  \]
  which is $\L \Gamma$-bimodular and satisfies that 
  \begin{equation}
    \xymatrix@C=3cm{
      L^p(\L \Gamma) \ar[d]^-{T_m} \ar[r]^-{J_p}
        & \displaystyle{ \prod_{\U} S^p(H) \otimes_p L^p(\Omega \rtimes_\theta \Gamma) \ar[d]^{(\Id \otimes \Id \rtimes T_m)^\U}}\\
      L^p(\L \Gamma) \ar[r]^-{J_p}
        & \displaystyle{ \prod_{\U} S^p(H) \otimes_p L^p(\Omega \rtimes_\theta \Gamma).}
    }
    \label{dia:Amenable1CocycleLp}
  \end{equation}
\end{corollary}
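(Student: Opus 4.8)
The plan is to read this corollary as the $L^p$-extrapolation of the $L^2$-datum supplied by Theorem \ref{thm:Amenable1CocyclesTrans}\ref{itm:Amenable1CocyclesTrans.2}, re-running the argument of Theorem \ref{thm:Extrpolation} with the amplified target algebra $\M = L^\infty(\Omega;\B(H)) \rtimes_{\theta \otimes \Id} \Gamma \cong \B(H) \weaktensor \big( L^\infty(\Omega) \rtimes_\theta \Gamma \big)$ in place of $L^\infty(\Omega) \rtimes_\theta \Gamma$. First I would observe that, since $\kappa$ is amenable, the equivalence in Theorem \ref{thm:Amenable1CocyclesTrans}\ref{itm:Amenable1CocyclesTrans.2} yields simultaneously that $\Phi$ is left amenable and that the associated $\L\Gamma$-bimodular $L^2$-isometry $J_2 : L^2(\L\Gamma) \to \prod_\U S^2(H) \otimes_2 L^2(\Omega \rtimes_\theta \Gamma)$ intertwines $T_m$ and $\Id \otimes \Id \rtimes T_m$ for every $m\in B(\Gamma)$. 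By Proposition \ref{prp:CorrepondencesCP}\ref{itm:CorrespondencesCP.2} the target Hilbert space is precisely $L^2(\M)$ with both $\L\Gamma$-actions implemented by the normal $\ast$-homomorphism $\pi:\L\Gamma \to \M$, $\pi(\lambda_g) = \kappa_g \rtimes \lambda_g$. Thus the situation is exactly that of Theorem \ref{thm:Extrpolation} for the equivariant $\ast$-homomorphism $\pi$, once one checks that nothing in that proof used the specific form of the target beyond its being semifinite.

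Next I would transcribe the extrapolation step. Amenability of $\pi$ provides a net of asymptotically central unit vectors $(\xi_\alpha)_\alpha \subset L^2(\M)$ whose vector states converge to $\tau$ on $\pi[\L\Gamma]$; as in \eqref{eq:PositiveSequence} these may be taken positive. Setting $J_p^\alpha(x) = u_\alpha\, |\xi_\alpha|^{1/p}\,\pi(x)\,|\xi_\alpha|^{1/p}$ and passing to the ultraproduct $J_p = (J_p^\alpha)_\alpha^\U$, the two estimates \ref{itm:ExtrpolationProof.1} and \ref{itm:ExtrpolationProof.2} from the proof of Theorem \ref{thm:Extrpolation} show that $J_p$ is a complete isometry into $\prod_\U S^p(H) \otimes_p L^p(\Omega \rtimes_\theta \Gamma) = \prod_\U L^p(\M)$, and the asymptotic centrality of $|\xi_\alpha|^{1/p}$ (via Lemma \ref{lem:Approximate}) gives the $\L\Gamma$-bimodularity $J_p(x\,\varphi\,y) = \pi(x)\,J_p(\varphi)\,\pi(y)$.

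For the intertwining diagram \eqref{dia:Amenable1CocycleLp}, I would reproduce the ``Furthermore'' argument of Theorem \ref{thm:Extrpolation}. Since $\pi$ is equivariant we have $\pi\circ T_m = (\Id \otimes \Id \rtimes T_m)\circ\pi$, so on $\CC[\Gamma]$ the discrepancy $J_p(T_m x) - (\Id \otimes \Id \rtimes T_m)^\U J_p(x)$ collapses, after invoking the modularity identity analogous to \eqref{eq:ModularityProperty} together with linearity, to the single claim that $\big\| u_\alpha|\xi_\alpha|^{2/p} - (\Id \otimes \Id \rtimes T_m)(u_\alpha|\xi_\alpha|^{2/p})\big\|_p \to 0$ for $m$ of positive type with $m(e)=1$. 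Here $\Id \otimes \Id \rtimes T_m$ is a positive, subunital, trace-contractive map on $\M$ (being the amplification by $\Id_{\B(H)}$ of the positive map $\Id\rtimes T_m$ attached to a normalized positive-type symbol), so Theorem \ref{thm:almostMultiplicativemap} applies verbatim and forces the limit to vanish.

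The main obstacle is bookkeeping rather than conceptual: one must verify that the proof of Theorem \ref{thm:Extrpolation}, stated there for the target $L^\infty(\Omega)\rtimes_\theta\Gamma$, goes through unchanged for the amplified crossed product $\M$. Concretely the two points to confirm are that $\M$ is again a semifinite von Neumann algebra whose $L^p$-spaces are the $S^p(H)\otimes_p L^p(\Omega\rtimes_\theta\Gamma)$, and that the relevant multiplier $\Id \otimes \Id \rtimes T_m$ remains positive and subunital for positive-type $m$ so that Theorem \ref{thm:almostMultiplicativemap} is available. Both are immediate from $\M \cong \B(H)\weaktensor(L^\infty(\Omega)\rtimes_\theta\Gamma)$ and from the multiplier being $\Id_{\B(H)}$ tensored with $\Id\rtimes T_m$; with these in hand the corollary is exactly Theorem \ref{thm:Extrpolation} applied to $\pi$.
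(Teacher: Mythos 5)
Your proposal is correct and takes essentially the same route as the paper: the paper's own proof consists precisely of invoking Theorem \ref{thm:Amenable1CocyclesTrans}\ref{itm:Amenable1CocyclesTrans.2} and then re-running the extrapolation argument of Theorem \ref{thm:Extrpolation}, which is exactly your plan. Your checks that the machinery (positive asymptotically central vectors, Lemma \ref{lem:Approximate}, Theorem \ref{thm:almostMultiplicativemap} applied to the positive subunital multiplier $\Id \otimes \Id \rtimes T_m$) survives the amplification to $\B(H) \weaktensor \big( L^\infty(\Omega) \rtimes_\theta \Gamma \big)$ merely make explicit the details the paper leaves implicit.
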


Although the theorem above will give that Corollary \ref{eq:lowerTransferenceBnd} holds whenever the action $\theta$ admits an amenable $1$-cocycle $\kappa$. Sadly, this doesn't provide more exotic examples since restricting the mean $m: L^\infty(\Omega;\B(H)) \to \CC$ to the unital von Neumann subalgebra $L^\infty(\Omega; \CC\1) = L^\infty(\Omega)$ yields a $\Gamma$-invariant mean on $\Omega$. Similarly, in the case in which $\theta$ is Zimmer-amenable and $\kappa$ is amenable, so is $\Gamma$ itself.

\textbf{Acknowledgement.} The author is thankful to Simeng Wang for a personal communication regarding the mistake on \cite{Gon2018CP} and for subsequent discussion on the present manuscript.

\begin{small}
  \bibliographystyle{alpha}
  \bibliography{../bibliography/bibliography}
\end{small}

\vspace{50pt}

\

\hfill \noindent \textbf{Adri\'an M. Gonz\'alez-P\'erez} \\
\null \hfill University Clermont-Auvergne, LMBP \\ 
\null \hfill 3 Place Vasarely 63178 Aubi\'ere, France \\
\null \hfill\texttt{adrian.gonzalezperez@uca.fr}\\
\null \hfill\texttt{chadrian.gzl@gmail.com}
\

\end{document}